\documentclass[11pt]{article}

\usepackage{graphicx}
\usepackage{mathrsfs}
\usepackage{dsfont}
\usepackage{amsmath}
\usepackage{amsthm}
\usepackage{amsfonts}
\usepackage{amssymb}
\usepackage{verbatim}
\usepackage{alltt}
\input xy
\xyoption{all}
\usepackage{diagrams}
\usepackage[all]{xy}
\usepackage{ForestCMD}

\setlength{\topmargin}{1.2cm} 
\setlength{\textwidth}{16cm} \setlength{\textheight}{22.5cm}
\setlength{\hoffset}{-1.5cm} \setlength{\voffset}{-2cm}

\DeclareMathAlphabet{\mathpzc}{OT1}{pzc}{m}{it}

\theoremstyle{plain}
\newtheorem{theorem}{Theorem}[section]
\newtheorem{lemma}[theorem]{Lemma}
\newtheorem{proposition}[theorem]{Proposition}

\newtheorem{corollary}[theorem]{Corollary}
\newtheorem{definition}[theorem]{Definition}
\theoremstyle{definition}
\newtheorem{example}[theorem]{Example}
\newtheorem{remark}[theorem]{Remark}

\theoremstyle{definition}
\newtheorem*{acknowledgements}{Acknowledgements}
\renewenvironment{proof}{\noindent{\it Proof.}}{\bgroup\hspace{\stretch{1}}$\square$\egroup\medskip\par}

\newcommand{\RT}{\mathrm{\bf T}}

\newcommand{\RS}{\mathrm{\bf S}}
\newcommand{\CT}{{\bf \Omega}}
\newcommand{\R}{\mathbb{R}}
\newcommand{\Z}{\mathbb{Z}}

\newcommand{\rmap}{\longrightarrow}
\newcommand{\Rep}{\textrm{Rep}}

\newcommand{\uid}{\operatorname{id}}

\begin{document}

\title{Tensor products of representations up to homotopy}

\author{
Camilo Arias Abad\footnote{Institut f\"ur Mathematik, Universit\"at Z\"urich, Camilo.Arias.Abad@math.uzh.ch. Partially supported by NWO Grant ``Symmetries and Deformations in Geometry'',  SNF Grant 20-113439, the Forschungskredit of the  Universit\"at Z\"urich and the ESI in Vienna.}, Marius Crainic\footnote{Mathematics Institute, Utrecht University, M.Crainic@uu.nl. Partially supported by NWO grant 639.032.712.} and Benoit Dherin\footnote{Mathematics Institute, Utrecht University, B.R.U.Dherin@uu.nl. Partially supported by NWO Grant 613.000.602 and SNF Grant PA002-113136.}
}

\maketitle

\abstract{
We study the construction of tensor products of representations up to homotopy, which are the $A_\infty$ version of ordinary representations. 
We provide formulas for the construction of tensor products of representations up to homotopy and of morphisms between them, and show that these formulas
give the homotopy category a monoidal structure which is uniquely defined up to equivalence.
}

\tableofcontents

\section{Introduction}

This work is motivated by the study of the cohomology of classifying spaces of Lie groupoids. For a Lie group $G$,  Bott \cite{Bott} constructed a  
spectral sequence converging to the cohomology of the classifying space $BG$ with first page
\begin{equation}
 E_{1}^{pq}=H_{\mathrm{diff}}^{p-q}(G,S^{q}(\mathfrak{g^{*}})),\label{spectral sequence Bott}
\end{equation}
the differentiable cohomology with coefficients in the symmetric powers of the coadjoint representation. 
If the Lie group $G$ is compact then the first page of the spectral sequence vanishes outside of the diagonal, and one obtains that 
the cohomology of $BG$ is isomorphic to the invariant polynomials on the Lie algebra.
The Cartan model for equivariant cohomology can be seen as a generalization of this computation for classifying spaces of groupoids associated to
compact group actions on manifolds.  In \cite{Get} Getzler constructed a model for equivariant cohomology
of non compact groups, generalizing Bott's spectral sequence to the case of general group actions. 
Behrend \cite{Beh1} extended Getzler's model to the case of stacks that can be represented by ``flat groupoids''. For general Lie groupoids the situation is more subtle because the
``adjoint representation'' is no longer a representation in the usual sense. Instead, one has to work with the notion of
representation up to homotopy. We have shown in \cite{AC3}
that the Bott spectral sequence does exist for arbitrary Lie groupoids, provided one has a well-behaved operation of taking symmetric powers
of representations up to homotopy. In the present paper
we study the existence and the uniqueness of tensor products of representations up to homotopy.


For a small  category $\mathcal{C}$, the notion of representation up to homotopy  is the $A_\infty$ version of the usual notion of representation. 
In terms of $A_\infty$ structures, one associates to $\mathcal{C}$  the differential graded category $\mathbb{R}\mathcal{C}$ whose objects are
those of $\mathcal{C}$, and whose morphisms are the linear span over $\mathbb{R}$ of those of $\mathcal{C}$. Then, a representation up to homotopy of $\mathcal{C}$ is
an $A_\infty$ functor from $ \mathbb{R}\mathcal{C}$ to the $dg$-category of differential graded vector spaces. We 
will be interested in the case where $\mathcal{C}= G$ is a Lie groupoid and require the structure 
operators to be smooth in the appropriate sense.
We would like to point out that the assumption that $\mathcal{C}$ is a Lie groupoid does not play any role in the construction of tensor products. We chose this level of generality only because our original 
motivation comes from studying this case. However, the whole construction applies to arbitrary categories and, more generally, to twisting cochains over a simplicial set (see \cite{Stasheff}). These more general twisting cochains appear for instance in the parallel transport of superconnections discussed in \cite{Block-Smith} and \cite{Igusa}.


The works of Loday \cite{Loday}, Markl-Schnider \cite{MS}, Senablidze-Umble \cite{SU} and Stasheff \cite{Stasheff}  explain that the construction of
tensor products of higher homotopy algebraic structures amounts to the construction of certain ``diagonal maps'' in some appropriate family of polytopes. In the case of $A_\infty$-morphisms, the right family of polytopes is the multiplihedra (see \cite{Forcey}).  
Since we consider the case in which the domain and the range categories are strict, the polytopes controlling our problem become much simpler, indeed, they are cubes.
See also Sugawara \cite{Sug} and Forcey \cite{Forcey} for an account of this.
This simplification of the combinatorics allows us to study not only tensor products of representations up to homotopy, but also of morphisms between them.


Here is a short account of the results of this paper. We provide explicit and universal formulas for tensor products that are unital and strictly associative or strictly symmetric, while showing that any two tensor product operations are equivalent.
We explain how to take tensor products of morphisms between representations up to homotopy, which correspond to natural transformations between the $A_\infty$ functors. We prove that
once a choice has been made for taking tensor products of objects, there is a natural way to take tensor products of morphisms.
We show that these constructions produce monoidal structures on the homotopy category of the representations up to homotopy, and that
this monoidal structure is unique up to equivalence.

The category of representations up to homotopy of a Lie groupoid is naturally a $dg$-category and it seems natural to ask whether the monoidal structure on the
homotopy category can be lifted to this $dg$-category by making choices of tensor products of all lengths in a coherent way. This is an interesting question that we do not
address here.


We conclude this introduction with an outline of the paper. 

In \S2, we review the definitions of representations up to homotopy, the morphisms between them, and the homotopies between the morphisms. 

The purpose of \S3 is to isolate the algebraic structure that controls the problem of tensoring representations up to homotopy. 
We show that a representation up to homotopy of $G$ on a complex of vector bundles $E$ is the same thing as a Maurer-Cartan element in a certain DGA (differential graded algebra) 
$\bar{\mathcal{A}}_{E}$ associated to $G$ and $E$.  However, for the
purpose of handling tensor products, the structure of DGA is not fine enough: $\bar{\mathcal{A}}_{E\otimes F}$ cannot be expressed in terms of the DGAs $\bar{\mathcal{A}}_{E}$ and $\bar{\mathcal{A}}_{F}$.  For that reason, we introduce the finer notion of DB-algebra and we describe a functor $\bar{K}$ from the category of DB-algebras to the category of complete DGAs. The DGA $\bar{\mathcal{A}}_{E}$ comes from a canonical DB-algebra
$\mathcal{A}_{E}$ and, this time, $\mathcal{A}_{E\otimes F}$ is related to the tensor product of the DB-algebras $\mathcal{A}_{E}$ and $\mathcal{A}_{F}$. Thus, one can state the problem of
constructing tensor products of representations up to homotopy in the language of DB-algebras.

In \S4, we construct a DB-algebra $\Omega$ that is universal with respect to Maurer-Cartan elements in the sense that a morphism of DB-algebras $\Omega\rightarrow A$ is the same as a Maurer-Cartan element in the complete differential graded algebra $\bar{K}(A)$.

In \S5, we show that the problem of constructing tensor products of representations up to homotopy corresponds to finding certain Maurer-Cartan elements in some universal differential graded algebra. We prove the existence and uniqueness of these tensor products, and provide explicit formulas for strictly associative or strictly symmetric one. We show that a tensor product can not enjoy both of these properties at the same time. We also explain that the tensor product can be chosen so that the product of unital representations remains unital.

In  \S6, we explain how to take tensor products of morphisms between representations. We show that any two choices are homotopic. We prove that the homotopy category $\mathcal{D}(G)$ has a monoidal structure that is uniquely defined up to natural equivalence.

In \S7, we give a more concrete realization of tensor products of morphisms. We point out that any universal Maurer-Cartan element $\omega$ comes with a canonical endomorphism $x_{\omega}$. Hence,  once the choice of $\omega$ is made, no further choices are needed in order to take tensor products of morphisms. The construction of $x_{\omega}$ is based on a canonical Hochschild cocycle of degree zero on $\Omega$, which can be interpreted as the non-commutative DeRham differential of $\Omega$.

The appendix contains general facts about Maurer-Cartan elements in complete differential graded algebras, morphisms between them and their relationship to  Hochschild cohomology.

\begin{acknowledgements} 
We thank Andre Henriques, Jean Louis Loday, Ieke Moerdijk and Bruno Vallette for various conversations we had at different stages of this work. C.A.A. and B.D. also thank Calder Daenzer for suggesting we think about these questions in his cabin in lake Tahoe.
\end{acknowledgements}

\section{Representations up to homotopy}

In this section, we recall the definition of the category  of representations up to homotopy. As mentioned in the introduction, we work over general Lie groupoids $G$
(see e.g. \cite{Mackenzie} for the basics), but the reader may assume for simplicity that $G$ is a group. 

Hence  throughout this paper the letter $G$ will stand for a Lie groupoid (which we also identify with the space of arrows) over the base smooth manifold $M$
(the space of units). The source and target maps will be written as $s,t:G\rmap M$. 
A representation up to homotopy of $G$ consists of the following data: 

\begin{enumerate}
\item A graded vector bundle $E$ over $M$. 

\item A differential $\partial$ on $E$; that is, a degree-one vector bundle morphism 
\[
 \partial:E^{\bullet}\rmap E^{\bullet+1}
\]
with $\partial\circ\partial=0$. 

\item A smooth operator that associates to each $g\in G$ a chain map 
\[ 
 \lambda_{g}:E_{s(g)}\rmap E_{t(h)},\ e\mapsto\lambda_{g}(e),
\] 
which we will refer to as the \textit{quasi-action}. Here, \textit{quasi} refers to the fact that it may fail to respect the composition operation. 

\item A smooth operation that associates to each pair $(g,h)$ of composable arrows a homotopy between $\lambda_{g}\lambda_{h}$ and $\lambda_{gh}$; i.e., a linear map that lowers the cochain degree by one, 
\[
 R_{2}(g,h):E_{s(h)}\rmap E_{t(h)}
\] 
with the property that 
\begin{equation}
 \label{R2}
 \lambda_{g}\lambda_{h}-\lambda_{gh}=\partial(R_{2}(g,h)),
\end{equation}
where the last expression makes use of the induced differential in the $\textrm{Hom}$-bundle:
\[ 
 \partial(R_{2}(g,h))=[\partial,R_{2}(g,h)]=\partial\circ R_{2}(g,h)+R_{2}(g,h)\circ\partial.
\]
 
\item Similar higher-order operations $R_{k}$, in which each $R_{k}$ measures the failure of higher-coherence equations for $\partial,\lambda,R_{2} \ldots,R_{k-1}$. In order to have more uniform notation, we will often write $R_{0}=\partial$, $R_{1}=\lambda$. 
\end{enumerate}

For the the precise definition, we recall that a
string of $k$ composable arrows is a $k$-tuple $(g_{1},\ldots,g_{k})\in G^k$ of arrows satisfying $s(g_{i})=t(g_{i+1})$ for $i=1\dots,k$.

\begin{definition}\label{definitionunital}
A \textbf{representation up to homotopy} $(E,R_k)$ of a Lie groupoid $G$ is a graded vector bundle $E$ over the base $M$, together with a sequence of operations $R_{k}$, $k\geq0$, where $R_{k}$ associates to a string of $k$-composable arrows $(g_{1},\ldots,g_{k})$ a linear map
 \[
R_{k}(g_{1},\ldots,g_{k}):E_{s(g_{k})}\rmap E_{t(g_{1})},
\]
of degree $1-k$, depending smoothly on the arguments and satisfying the equations
\begin{equation}
 \label{structure equations}
 \sum_{j=1}^{k-1}(-1)^{j}R_{k-1}(g_{1},\ldots,g_{j}g_{j+1},\ldots,g_{k})=\sum_{j=0}^{k}(-1)^{j}R_{j}(g_{1},\ldots,g_{j})\circ R_{k-j}(g_{j+1},\ldots,g_{k}).
\end{equation}
The representation up to homotopy $(E,R_k)$  is said to be \textbf{unital} if the restriction of $R_1$ to the unit space $M$ is the vector bundle identity map $\uid_E$,  and if the higher components $R_k$ vanish when one of the arguments is a groupoid unit.
\end{definition}

We will denote the vector bundle morphism $R_{0}$ by $\partial^{E}$ or simply by $\partial$ when no confusion arises. We will say that $(E, \partial)$ is the complex underlying the representation up to homotopy $E$, or that the operators $\{R_{k}\}_{k\geq1}$ define a representation up to homotopy on the complex $(E,\partial)$. With this notation, the equations above read: 
\begin{eqnarray*}
 \sum_{j=1}^{k-1}(-1)^{j}R_{k-1}(g_{1},\ldots,g_{j}g_{j+1},\ldots,g_{k})+\sum_{j=1}^{k-1}(-1)^{j+1}R_{j}(g_{1},\ldots,g_{j})\circ R_{k-j}(g_{j+1},\ldots,g_{k})\\
 =\partial\circ R_{k}(g_{1},\ldots,g_{k})+(-1)^{k}R_{k}(g_{1},\ldots,g_{k})\circ\partial.
\end{eqnarray*}

We turn now to the definition of morphisms between representations up to homotopy:

\begin{definition}
\label{definition morphism} 
A \textbf{morphism} from a representation up to homotopy $(E,R_{k})$ to another one $(E',R_{k}^{'})$ is of a sequence $\Phi=\{\Phi_{k}\}_{k\geq0}$, where $\Phi_{k}$ is an operator that associates to a string of $k$-composable arrows $(g_{1},\ldots,g_{k})$ a linear map 
\[
 \Phi_{k}(g_{1},\ldots,g_{k}):E_{s(g_{k})}\rmap E_{t(g_{1})}'
\]
of degree $-k$, depending smoothly on the arguments, such that
\begin{eqnarray}\label{equations for a map}
 \sum_{i+j=k}(-1)^{j}\Phi_{j}(g_{1},\dots,g_{j})  \circ  R_{i}(g_{j+1},\dots,g_{k})&=&
  \sum_{i+j=k}R_{j}^{'}(g_{1},\dots,g_{j})\circ\Phi_{i}(g_{j+1},\dots,g_{k}) \\
 &  & +\sum_{j=1}^{k-1}(-1)^{j}\Phi_{k-1}(g_{1},\dots,g_{j}g_{j+1},\dots,g_{k}).\nonumber 
\end{eqnarray}
\end{definition}

The composition of morphisms is given by the formula
$$
(\Phi\circ\Psi)_k(g_1,\dots,g_k)=\sum_{i+j=k}\Phi_i(g_1,\dots,g_i)\circ\Psi(g_{i+1},\dots,g_k).
$$
We will denote by $\Rep^{\infty}(G)$ the resulting category of representations up to homotopy of $G$. Note that a morphism $\Phi$ is an isomorphism 
if and only if 
$\Phi_{0}$ is an isomorphism of (graded) vector bundles. We will also need the following stronger notion of isomorphism.

\begin{definition}
\label{def-strong} 
We say that two representations up to homotopy $(E,\partial,R_{k})$ and $(E',\partial',R_{k}^{'})$ are \textbf{strongly isomorphic} if $E=E'$, $\partial=\partial'$ and there exists a morphism $\Phi$ with $\Phi_{0}=\textrm{Id}_{E}$. 
In this case, $\Phi$ will be called a strong isomorphism. 
\end{definition}

There is also a natural notion of homotopy between morphisms: 

\begin{definition}
Let $\Phi$ and $\Psi$ be morphisms of representation up to homotopy from $(E,R_{k})$ to $(E',R_{k}^{'})$. A \textbf{homotopy} between $\Phi$ and $\Psi$ consists of a sequence $h=\{h_{k}\}$, where $h_{k}$ is an operator that associates with a string of $k$-composable arrows $(g_{1},\ldots,g_{k})$ a linear map 
\[
 h_{k}(g_{1},\ldots,g_{k}):E_{s(g_{k})}\rmap E_{t(g_{1})}',
\]
of degree $-k-1$ depending smoothly on the arguments, and such that
\begin{eqnarray}
\Phi_{k}-\Psi_{k} & = & \partial\circ h_{k}(g_{1},\dots,g_{k})+(-1)^{k}h_{k}(g_{1},\dots,g_{k})\circ\partial\label{equations for a homotopy}\\
 &  & +\sum_{i=0}^{k-1}(-1)^{i}h_{i}(g_{1},\dots,g_{i})\circ R_{k-i}(g_{i+1},\dots,g_{k})\nonumber\\
 &  & +\sum_{i=1}^{k}(-1)^{i}R'_{i}(g_{1},\dots,g_{i})\circ h_{k-i}(g_{i+1},\dots,g_{k})\nonumber \\
 &  & +\sum_{i=1}^{k-1}(-1)^{i+1}h_{k-1}(g_{1},\dots,g_{i}g_{i+1},\dots,g_{k}).\nonumber 
\end{eqnarray}
\end{definition}

The composition is well defined on homotopy classes of morphisms. The \textbf{homotopy category} $\mathcal{D}(G)$ is defined as the category whose objects are representations up to homotopy and whose morphisms are homotopy classes of morphisms between representations up to homotopy. 

\vspace*{.2in}

Let us now describe the problem of constructing tensor products. Given two representations up to homotopy $E$ and $F$, the tensor product $E\otimes F$ is defined, first of all, as a cochain complex of vector bundles over $M$ with the standard differential 
\[
 \partial(e\otimes f)=\partial(e)\otimes f+(-1)^{p}e\otimes\partial(f),
\]
where $p$ is the degree of $e$. The first step toward giving this complex the structure of a representation up to homotopy is to define the $R_{1}$-term. Thinking of it as a quasi-action, there is again a standard choice, the diagonal one: 
\[
 \lambda_{g}(e\otimes f)=\lambda_{g}(e)\otimes\lambda_{g}(f).
\]
However, for higher $R$'s, the problem is more subtle. For instance, when looking for an $R_{2}$, we have to make sure that the equation (\ref{R2}) for $E\otimes F$ is satisfied. Already in this case the equation has more than one natural and interesting solution. For instance, if one is interested in a symmetric tensor product, then there is only one solution for $R_{2}$: 
\begin{eqnarray*}
 R_{2}(g,h)(e\otimes f) & = & \frac{1}{2}(  R_{2}(g,h)(e)\otimes\lambda(gh)(f)+R_{2}(g,h)(e)\otimes(\lambda(g)\circ\lambda(h))(f)\\
 &  & +\lambda(gh)(e)\otimes R_{2}(g,h)(f)+(\lambda(g)\circ\lambda(h))(e)\otimes R_{2}(g,h)(f)).
\end{eqnarray*}
On the other hand, this specific second component would not work if we wanted the tensor product to be associative, in which case we could choose, for instance, the second component to be 
\begin{eqnarray*}
 R_{2}(g,h)(e\otimes f) & = & R_{2}(g,h)(e)\otimes\lambda(gh)(f)+(\lambda(g)\circ\lambda(h))(e)\otimes R_{2}(g,h)(f)).
\end{eqnarray*}
For higher values of $k$, the equations become much more involved. The aim of this paper is to understand the algebraic structure that
governs representations up to homotopy, and to use it to classify all possible tensor products of representations up to homotopy and of morphisms between them.

\section{Maurer-Cartan elements and DB-algebras}

\label{Maurer-Cartan elements and DB-algebras}

In this section, we discuss the algebraic structures that are relevant to the construction of tensor products. First, we interpret representations up to homotopy as Maurer-Cartan elements in a certain DGA (Differential Graded Algebra) and then we describe the building pieces of the DGAs involved. This underlying algebraic structure is important when tensoring two representations up to homotopy, and we axiomatize it under the name of DB-algebra (Differential Bar algebra). Hence the main outcome is the construction of functors
\[
 \Big\{\textrm{Complexes of vector bundles}\ (E,\partial)\ \textrm{over}\ M\Big\}
  \rmap 
 \Big\{\textrm{DB-algebras}\Big\}
  \rmap 
 \Big\{\textrm{Complete\ DGAs}\Big\},
\]
\[ \ \ \ E\mapsto\mathcal{A}_{E}  \mapsto \bar{\mathcal{A}}_{E},\]
so that representations up to homotopy on $E$ correspond to Maurer-Cartan elements in $ \bar{\mathcal{A}}_{E}$
and the resulting composition functor behaves well with respect to tensor products. 
Moreover, the notion of strong isomorphism on the left hand side corresponds to the notion of gauge equivalence between Maurer-Cartan 
elements on the right hand side. For the general notion of complete DGAs, Maurer-Cartan elements and
gauge equivalences, we refer the reader to the appendix.

\subsection{Representations up to homotopy as Maurer-Cartan elements}

We start by constructing $\bar{\mathcal{A}}_{E}$. Let $G_k$ be the submanifold of $G^k$ consisting of 
strings of $k$ composable arrows of $G$. By convention, $G_0 = M$. We will also denote by $s$ and $t$ the maps 
$G_k \rightarrow M$ given by $s(g_1,\dots, g_k)=s(g_k)$ and $t(g_1,\dots,g_k)=t(g_1).$
For a graded vector bundle $E$ over $M$, we consider the pull-back bundles $s^{*}E$ and $t^{*}E$ to $G_{k}$, and we form the graded $\textrm{Hom}$-bundle $\textrm{Hom}(s^{*}E,t^{*}E)$ over
$G_{k}$. Recall that $\phi:s^{*}E\rmap t^{*}E$ has degree $l$ if it maps $s^{*}(E^{\bullet})$ to $t^{*}(E^{\bullet+l})$. We will consider the resulting spaces of sections
\begin{equation} 
 \label{space of sections}
 \mathcal{A}_{E}^{k}(l):=\Gamma(G_{k},\textrm{Hom}^{l}(s^{*}E,t^{*}E)).
\end{equation}
For $c\in\mathcal{A}_{E}^{k}(l)$, we write 
\[
 k(c)=k,\ l(c)=l,\ |c|=k(c)+l(c),
\]
and we call $|c|$ the total degree of $c$. All these spaces together define a bigraded algebra, with the product $\star$ that associates to $c\in\mathcal{A}_{E}^{k}(l)$ and $c'\in\mathcal{A}_{E}^{k'}(l')$ the element $c\star c'\in\mathcal{A}_{E}^{k+k'}(l+l')$, given by 
\begin{equation}
 (c\star c')(g_{1},\ldots,g_{k+k'})=(-1)^{k(k'+l')}c(g_{1},\ldots,g_{k})\circ c'(g_{k+1},\ldots,g_{k+k'}).\label{formula-star}
\end{equation}

When $E$ is a cochain complex, then so is the $\textrm{Hom}$-bundle, with the differential
\begin{equation} 
 \partial(\phi)=[\partial,\phi]=\partial\circ\phi-(-1)^{l}\phi\circ\partial,
\end{equation}
for $\phi\in\textrm{Hom}^{l}(s^{*}E,t^{*}E)$. This defines a differential
\begin{equation} 
 \label{partial in A_E} 
 \partial:\mathcal{A}_{E}^{k}(l)\rmap\mathcal{A}_{E}^{k}(l+1),
\end{equation}
induced by the differential of $E$. On the other hand, the groupoid structure induces a differential along the other degree: 
\[
 d:\mathcal{A}_{E}^{k}(l)\rmap\mathcal{A}_{E}^{k+1}(l),
\]
\[
 d(c)(g_{1},\ldots,g_{k+1})=\sum_{j=1}^{k}(-1)^{j}c(g_{1},\ldots,g_{j}g_{j+1},\ldots,g_{k+1}).
\]
We denote by $\bar{\mathcal{A}}_{E}$ the DGA that, in degree $n$, is given by 
\[
 \bar{\mathcal{A}}_{E}^{n}:=\Pi_{k+l=n}\mathcal{A}_{E}^{k}(l)
\]
and whose elements should be thought of as infinite sums $\gamma_0+\gamma_1+\gamma_2+\cdots$ of homogeneous elements $\gamma_i\in\bar{\mathcal{A}}_{E}^{i}$, $i\geq 0$. The product $\star$ and the total differential 
\[
 d_{\textrm{tot}}(c):=\partial(c)+(-1)^{n}d(c)
\]
give $\bar{\mathcal{A}}_{E}$ the structure of a DGA. The signs are chosen so that the differential is a derivation with respect to
$\star$. This DGA is a {\it complete} DGA, in the sense of the appendix, with the filtration: 
\[
 F_{p}\bar{\mathcal{A}}_{E}\;:=\;\Big\{\gamma=\gamma_{0}+\gamma_{1}+\ldots\in\bar{\mathcal{A}}_{E}:
    \gamma_{0}=\ldots= \gamma_{p-1}=0\Big\}.
\]
Note that the structure of $\bar{\mathcal{A}}_{E}$ depends on the differential $\partial$ and not only on the vector bundle $E$. 
The formulas that appear in the definition of representations up to homotopy and of morphisms between them take now the following
more compact form, which follows by a direct computation.

\begin{proposition} \label{rep-MC} 
Let $G$ be a Lie groupoid over a manifold $M$ and let $(E,\partial)$ a cochain complex of vector bundles over $M$. Also, let $\{R_{k}\}_{k\geq1}$ be a sequence of operators such that $R_{k}\in\mathcal{A}_{E}^{k}(1-k)$. Then, $(E,\partial,R_{k})$ is a representation up to homotopy of $G$ if and only if
\[ 
 R_{E}:=R_{1}+R_{2}+\ldots\ \in\bar{\mathcal{A}}_{E}
\]
is a Maurer-Cartan element for $\bar{\mathcal{A}}_{E}$. Moreover, for two such sets of operations $\{R_{k}\}_{k\geq1}$ and $\{R_{k}^{'}\}_{k\geq1}$, there is a one-to-one correspondence between: 
\begin{enumerate}
 \item strong isomorphisms between $(E,\partial,R_{k})$ and $(E,\partial,R_{k}^{'})$ (Definition \ref{def-strong}), and
 \item strong gauge equivalences between the Maurer-Cartan elements $R_{E},R'_{E}\in\bar{\mathcal{A}}_{E}$ (Definition \ref{strong-appendix}). 
\end{enumerate}
\end{proposition}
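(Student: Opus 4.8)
The plan is to reduce the statement to a purely formal matching of two sets of equations: the ``structure equations'' (\ref{structure equations}) for a representation up to homotopy (and the equations (\ref{equations for a map}) for a morphism), against the Maurer--Cartan equation $d_{\mathrm{tot}}(R_E) + R_E \star R_E = 0$ (and the gauge-equivalence condition) in the complete DGA $\bar{\mathcal{A}}_E$. Since the statement is asserted to ``follow by a direct computation,'' the real task is to set up the bookkeeping so that the signs visibly agree; I would not grind through every term.

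First I would expand the Maurer--Cartan equation by total degree. Writing $R_E = R_1 + R_2 + \cdots$ with $R_k \in \mathcal{A}_E^k(1-k)$, each $R_k$ has total degree $|R_k| = k + (1-k) = 1$, so $R_E$ is a degree-one element, as required for a Maurer--Cartan element. I would then compute $d_{\mathrm{tot}}(R_E) = \partial(R_E) + (-1)^1 d(R_E)$ component by component: the $\partial$ part contributes $\partial \circ R_k + (-1)^k R_k \circ \partial$ in $\mathcal{A}_E^k(2-k)$ (using the Hom-bundle differential with $l = 1-k$), and the $d$ part contributes $-d(R_{k-1}) = -\sum_j (-1)^j R_{k-1}(\ldots, g_j g_{j+1}, \ldots)$, landing in $\mathcal{A}_E^k(1-k)$ — wait, one must track that $d$ raises $k$ by one, so the piece of $d(R_E)$ in bidegree $(k, 1-k)$ comes from $R_{k-1} \in \mathcal{A}_E^{k-1}(1-(k-1)) = \mathcal{A}_E^{k-1}(2-k)$; thus both $\partial(R_k)$-type and $d(R_{k-1})$-type terms land in $\mathcal{A}_E^k(2-k)$, matching the target of $R_k \circ \partial$ etc. Then I would expand $R_E \star R_E$ using (\ref{formula-star}): the bidegree-$(k, 1-k)$ component is $\sum_{j} R_j \star R_{k-j}$, and the sign $(-1)^{k(k'+l')}$ with $k' = k-j$, $l' = 1-(k-j)$ becomes $(-1)^{j \cdot 1} = (-1)^j$ on the $R_j \circ R_{k-j}$ term after the $\star$ sign simplifies (this is the point where I expect the sign check to be most delicate, and I would do it carefully). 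Collecting the bidegree-$(k,2-k)$ part of $d_{\mathrm{tot}}(R_E) + R_E\star R_E = 0$ should reproduce exactly the second displayed form of (\ref{structure equations}) after the definition, i.e. $\sum_{j=1}^{k-1}(-1)^j R_{k-1}(\ldots g_jg_{j+1}\ldots) + \sum_{j=1}^{k-1}(-1)^{j+1} R_j \circ R_{k-j} = \partial \circ R_k + (-1)^k R_k \circ \partial$. The equivalence ``$R_k$ representation up to homotopy $\iff$ each such equation holds $\iff$ $R_E$ Maurer--Cartan'' then follows since the Maurer--Cartan equation holds iff each of its homogeneous components vanishes (here completeness of $\bar{\mathcal{A}}_E$ guarantees the infinite sum makes sense).

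For the second part, I would recall from the appendix the definition of strong gauge equivalence (Definition \ref{strong-appendix}): two Maurer--Cartan elements $R_E, R_E'$ are strongly gauge equivalent when there is a degree-zero element $\phi \in F_1 \bar{\mathcal{A}}_E$ (so $\phi = \phi_1 + \phi_2 + \cdots$ with $\phi_k \in \mathcal{A}_E^k(-k)$) inducing an invertible transformation taking $R_E$ to $R_E'$, with the gauge-equivalence equation $R_E' = e^{\phi} \ast (R_E)$, or in the equivalent finite-form it takes the appropriate recursive shape. A strong isomorphism $\Phi$ (Definition \ref{def-strong}) has $\Phi_0 = \mathrm{Id}_E$, so $\Phi - \mathrm{Id} = \Phi_1 + \Phi_2 + \cdots$ with $\Phi_k \in \mathcal{A}_E^k(-k)$ of total degree zero — exactly the shape of a gauge parameter. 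I would then show that the morphism equations (\ref{equations for a map}) for $\Phi$, specialized to $R' = R'_E$, $R = R_E$ and $\Phi_0 = \mathrm{Id}$, are termwise equivalent to the components of the gauge-equivalence equation relating $R_E$ and $R_E'$; this is again a sign-matching computation, structurally identical to the first part, using (\ref{formula-star}) and the formula for $d(c)$. The composition formula for morphisms should match the group(oid) multiplication of gauge transformations, confirming that the correspondence is a bijection (inverse morphisms corresponding to inverse gauge transformations).

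The main obstacle is purely the sign discipline: matching the $(-1)^{k(k'+l')}$ convention in $\star$, the $(-1)^n$ twist in $d_{\mathrm{tot}}$, and the $(-1)^l$ in the Hom-bundle differential against the explicit signs in (\ref{structure equations}) and (\ref{equations for a map}). I would organize this by fixing, once and for all, the bidegree of every term appearing, so that each sign is forced, and I would sanity-check against the low-degree cases already written out in \S2 — namely that the bidegree-$(2,0)$ component of the Maurer--Cartan equation reproduces (\ref{R2}), $\lambda_g\lambda_h - \lambda_{gh} = \partial(R_2(g,h))$, and that the bidegree-$(1,0)$ component says $\partial \circ \lambda + \lambda \circ \partial = 0$, i.e. $\lambda_g$ is a chain map. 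Once those base cases come out right, the general pattern propagates and the proposition is ``a direct computation'' as claimed.
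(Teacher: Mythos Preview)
Your approach is correct and is exactly what the paper has in mind: it states only that the proposition ``follows by a direct computation,'' and your expansion of $d_{\mathrm{tot}}(R_E)+R_E\star R_E$ by bidegree, together with the analogous expansion of the gauge equation $uR_E-R_E'u=d_{\mathrm{tot}}u$ for $u=\mathrm{Id}_E+\Phi_1+\Phi_2+\cdots$, is precisely that computation. One small correction: in the second part you invoke the exponential form $e^{\phi}$, but the paper's Definition~\ref{strong-appendix} defines $G_1(A)=1+(F_1A)^0$ directly, so the correspondence is simply $\Phi\leftrightarrow u$ with $u_0=\mathrm{Id}_E$ and $u_k=\Phi_k$ for $k\geq 1$; no exponential or composition check is needed for the bijection, since the morphism equations~(\ref{equations for a map}) with $\Phi_0=\mathrm{Id}_E$ are \emph{literally} the componentwise form of $u\star R_E-R_E'\star u=d_{\mathrm{tot}}u$.
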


\subsection{DB-algebras}

The description of representations up to homotopy in terms of Maurer-Cartan elements is still not very useful when it comes to constructing tensor products. The reason is very simple: given two cochain complexes $E$ and $F$, the DGA $\bar{\mathcal{A}}_{E\otimes F}$ is not directly related to the tensor product of the DGAs $\bar{\mathcal{A}}_{E}$ and $\bar{\mathcal{A}}_{F}$. Looking at the differential of $\bar{\mathcal{A}}_{E\otimes F}$ it becomes clear that
there is more structure present in $\bar{\mathcal{A}}_{E}$ and $\bar{\mathcal{A}}_{F}$ then just that of DGA. This brings us to the notion of a DB-algebra.

\begin{definition}\label{T-algebras} 
A differential bar-algebra, or \textbf{DB-algebra}, is a bigraded vector space
\[
 \mathcal{A}=\bigoplus_{k\geq0,l\in\mathbb{Z}}\mathcal{A}^{k}(l)
\]
together with: 
\begin{itemize}
 \item A structure of bigraded associative algebra with the product 
 \[ 
  \circ:\mathcal{A}^{k}(l)\otimes\mathcal{A}^{k'}(l')\rmap\mathcal{A}^{k+k'}(l+l').
 \] 
 For $a\in\mathcal{A}^{k}(l)$, we write $k(a)=k$, $l(a)=l$, and we define the total degree $|a|=k(a)+l(a)$. 
 
 \item A derivation of bidegree $(1,0)$; i.e., a linear map  
 \[ 
  \partial:\mathcal{A}^{k}(l)\rmap\mathcal{A}^{k}(l+1)
 \] 
 that satisfies 
 \begin{equation}
  \partial(a\circ b)=\partial(a)\circ b+(-1)^{l(a)}a\circ\partial{b}.\label{T-algebras-Leibniz}
 \end{equation}
  
 \item For each $k\geq1$, there are linear maps 
 \[
  d_{i}:\mathcal{A}^{k}(l)\rmap\mathcal{A}^{k+1}(l),\quad i=1,\dots,k,
 \]
 commuting with $\partial$ and satisfying 
 \begin{eqnarray*}
  d_{j}d_{i} & = & d_{i}d_{j-1},\quad\textrm{if}\; i<j,
 \end{eqnarray*}
 and, for $a\in\mathcal{A}^{k}(l)$, 
 \begin{equation}
  d_{i}(a\circ b)=\begin{cases}
  d_{i}(a)\circ b,\quad k\geq i\\
  a\circ d_{i-k}(b),\quad k<i.\end{cases}\label{T-algebras-di}
 \end{equation}
\end{itemize}
A morphism between two DB-algebras is a linear map that preserves both degrees and commutes with all the structure maps. We denote by $\underline{\mathcal{DB}ar}$ the resulting category. 
\end{definition}

For a general DB-algebra $\mathcal{A}$ we introduce the operators
\begin{equation}
 d=\sum_{i=1}^{k}(-1)^{i}d_{i}:\mathcal{A}^{k}(l)\rmap\mathcal{A}^{k+1}(l).\label{d}
\end{equation}
From the axioms, it follows that $d$ is a biderivation with respect to $\circ$.

\begin{lemma}\label{passing-to-DGA} 
Let $\mathcal{A}$ be a DB-algebra. Then $\mathcal{A}$, together with the total grading, the signed product
\begin{equation}
 a\star b=(-1)^{k(a)|b|}a\circ b \label{bullet}
\end{equation}
and the total differential 
\[ 
 d_{\textrm{tot}}=\partial+(-1)^{n}d:\mathcal{A}^{n}\rmap\mathcal{A}^{n+1},
\] 
is a DGA. 
\end{lemma}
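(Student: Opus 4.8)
A DGA structure requires: (i) associativity of the product $\star$, (ii) $d_{\textrm{tot}}^2 = 0$, and (iii) the Leibniz rule for $d_{\textrm{tot}}$ with respect to $\star$. Since $\star$ is just $\circ$ rescaled by a sign depending only on the total degrees, and associativity is a statement about three elements whose total-degree signs combine consistently, the first point is a routine sign bookkeeping exercise: one checks that the sign $(-1)^{k(a)|b|}$ is "cocyclic" enough that $(a\star b)\star c$ and $a\star(b\star c)$ pick up the same total sign. I would record the sign identity $k(a\circ b) = k(a)+k(b)$ and $|a\circ b| = |a|+|b|$ and expand both sides.

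**Next I would treat $d_{\textrm{tot}}^2 = 0$.** Write $d_{\textrm{tot}} = \partial + (-1)^n d$ on $\mathcal{A}^n$. Since $\partial$ has bidegree $(0,1)$ and $d$ has bidegree $(1,0)$, both raise total degree by one, so $d_{\textrm{tot}}$ does land in $\mathcal{A}^{n+1}$. Expanding $d_{\textrm{tot}}^2$ gives three contributions: $\partial^2$, which vanishes because $\partial$ is a differential; $d^2$, which I must show vanishes; and the cross terms $\partial d \pm d \partial$, which cancel because the $d_i$ commute with $\partial$ (hence so does $d$) and the sign $(-1)^n$ flips appropriately between degree $n$ and degree $n+1$. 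For $d^2 = 0$: expand $d^2 = \sum_{i,j}(-1)^{i+j} d_j d_i$ acting on $\mathcal{A}^k$, split into $i<j$, $i=j$, $i>j$, and use the simplicial-type relation $d_j d_i = d_i d_{j-1}$ for $i<j$ to pair up terms with opposite signs — this is the standard argument that the alternating sum of face maps squares to zero. I need to be a little careful that the index ranges ($i$ runs $1,\dots,k$ on $\mathcal{A}^k$ but $1,\dots,k+1$ on $\mathcal{A}^{k+1}$) match up, but this is exactly the simplicial identity pattern.

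**Finally, the Leibniz rule for $d_{\textrm{tot}}$ with respect to $\star$.** It suffices to show $\partial$ and $d$ are each derivations of the appropriate signed type with respect to $\star$, then combine. The hypothesis already gives that $\partial$ is a derivation for $\circ$ with sign $(-1)^{l(a)}$; converting to $\star$ and to the total degree, I need this to become $(-1)^{|a|}$, which requires checking that the extra sign factors from the definition of $\star$ absorb the discrepancy between $l(a)$ and $|a|$ — again pure sign chasing. The remark before the lemma already asserts $d = \sum (-1)^i d_i$ is a biderivation for $\circ$; I would either cite that or re-derive it from \eqref{T-algebras-di}, splitting into the cases $i \le k(a)$ and $i > k(a)$ so that the Leibniz behavior of $d_i$ assembles into the graded Leibniz rule for $d$, and then transport this through the sign twist to $\star$ and through the $(-1)^n$ twist in $d_{\textrm{tot}}$.

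**The main obstacle is not conceptual but bookkeeping:** getting every sign convention — the $(-1)^{k(k'+l')}$ in $\star$ versus $(-1)^{k(a)|b|}$, the $(-1)^{l(a)}$ in the $\partial$-Leibniz rule versus the target $(-1)^{|a|}$, and the alternating $(-1)^n$ attached to $d$ — to line up simultaneously in the Leibniz computation for $d_{\textrm{tot}}$. The cleanest route is probably to prove the two sublemmas "($\mathcal{A},\star,\partial$) is a DGA" and "($\mathcal{A},\star,(-1)^n d$) is a DGA" separately, since the cross-term cancellation in $d_{\textrm{tot}}^2$ and the additivity of derivations then finish the proof with essentially no further sign work. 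I expect the whole argument to be a direct computation of a few lines once the sign conventions are nailed down.
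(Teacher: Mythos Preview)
Your proposal is correct and is precisely the direct verification the paper has in mind; the paper states this lemma without proof, treating it as a routine sign check, and your outline (associativity of $\star$ from that of $\circ$, $d^2=0$ from the simplicial-type relations $d_jd_i=d_id_{j-1}$, cancellation of cross terms via $[\partial,d_i]=0$, and the Leibniz rule obtained by transporting the biderivation properties of $\partial$ and $d$ through the sign twist) is exactly the expected argument.
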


\begin{definition} 
Given a DB-algebra $\mathcal{A}$, we denote by $\bar{\mathcal{A}}$ the completion of $\mathcal{A}$ with respect to the filtration by the $k$-degree. In other words, $\bar{\mathcal{A}}$ is the DGA with 
\[
 \bar{\mathcal{A}}^{n}=\prod_{k+l=n}\mathcal{A}^{k}(l),
\]
endowed with $\star$ and $d_{\textrm{tot}}$. The elements $a\in\bar{\mathcal{A}}^{n}$ will be written as infinite sums 
\begin{equation}
 a=a_{0}+a_{1}+\ldots,\ \ \ \textrm{with}\ a_{k}\in\mathcal{A}^{k}(n-k),\label{series}
\end{equation}
and we call $a_{k}$ the $k$-th component of $a$. This construction defines a functor 
\[
 \overline{K}:\underline{\mathcal{DB}ar}\rmap\overline{DGA}
\]
from the category of DB-algebras to the category of complete DGAs.
\end{definition}

\begin{example} \label{explanation} 
It is now clear that the DGA $(\bar{\mathcal{A}}_E,d_{tot},\star)$ from the previous subsection comes from a 
DB-algebra $(\mathcal{A}_E,\partial,\circ, d_i)$:
\begin{itemize} 
 \item the underlying bigraded space is $ \bigoplus_{k\geq 0,\,l \in \Z} \mathcal{A}_E^k(l)$,
 \item the product $\circ$ is the unsigned version of $\star$:
   \begin{equation}
    (c\circ c')(g_{1},\ldots,g_{k+k'})=c(g_{1},\ldots,g_{k})\circ c'(g_{k+1},\ldots,g_{k+k'}),\label{composition-formula}
   \end{equation}
 \item the differential $\partial$ is the $\textrm{Hom}$-bundle differential defined in \eqref{partial in A_E}, and
 \item the operators $d_{i}$ are given by the formulas:
\[
 d_{i}(c)(g_{1},\ldots,g_{k+1})=c(g_{1},\ldots,g_{i}g_{i+1},\ldots,g_{k+1}).
\]
\end{itemize}
\end{example}

\subsection{The tensor product of DB-algebras}
\label{The tensor product of DB-algebras}

The category $\underline{\mathcal{DB}ar}$ has a natural tensor product operation that will be denoted by $\boxtimes$. Given two DB-algebras $\mathcal{A}$ and $\mathcal{B}$, their tensor product $\mathcal{A}\boxtimes\mathcal{B}$ is defined as follows. As a bigraded vector space, 
\[
 (\mathcal{A}\boxtimes\mathcal{B})^{k}(l)=\bigoplus_{i+j=l}\mathcal{A}^{k}(i)\otimes\mathcal{B}^{k}(j).
\]
For $a\in\mathcal{A}^{k}(i)$ and $b\in\mathcal{B}^{k}(j)$, we will denote by $a\boxtimes b$ the resulting tensor in $\mathcal{A}\boxtimes\mathcal{B}$. The differential $\partial$ and the operators $d_{i}$ are given by 
\[
 \partial(a\boxtimes b)=\partial(a)\boxtimes b+(-1)^{l(a)}a\boxtimes\partial(b),\ d_{i}(a\boxtimes b)=d_{i}(a)\boxtimes d_{i}(b),
\]
while the multiplication $\circ$ by 
\[
 (a\boxtimes b)\circ(a'\boxtimes b')=(-1)^{l(b)l(a')}(a\circ a')\boxtimes(b\circ b').
\]

The previous definition is designed so that the construction $E\mapsto\mathcal{A}_{E}$ behaves well with respect to tensor products.

\begin{proposition}\label{mEF} 
For any two complexes of vector bundles $E$ and $F$ over $M$, the canonical map
\[
 m_{E,F}:\mathcal{A}_{E}\boxtimes\mathcal{A}_{F}\rmap\mathcal{A}_{E\otimes F},
\]
\[
 m_{E,F}(c\boxtimes c')(g_{1},\ldots,g_{k})=c(g_{1},\ldots,g_{k})\otimes c'(g_{1},\ldots,g_{k})
\]
is a morphism of DB-algebras.
\end{proposition}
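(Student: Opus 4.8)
The plan is to verify directly that $m_{E,F}$ respects each piece of the DB-algebra structure, working through the list in Definition \ref{T-algebras}. First I would check that $m_{E,F}$ is well-defined and preserves the bigrading: given $c \in \mathcal{A}_E^k(i)$ and $c' \in \mathcal{A}_F^k(j)$, the pointwise tensor $c(g_1,\dots,g_k)\otimes c'(g_1,\dots,g_k)$ is a map $(E\otimes F)_{s(g_k)} \to (E\otimes F)_{t(g_1)}$ of degree $i+j$ sitting over $G_k$, so it lands in $\mathcal{A}_{E\otimes F}^k(i+j)$, matching the definition $(\mathcal{A}_E\boxtimes\mathcal{A}_F)^k(l)=\bigoplus_{i+j=l}\mathcal{A}_E^k(i)\otimes\mathcal{A}_F^k(j)$. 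Then there are three compatibilities to establish: with the differential $\partial$, with the face operators $d_i$, and with the product $\circ$.

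For the differential, the point is that the Hom-bundle differential on $\mathcal{A}_{E\otimes F}$ is the one induced by the standard tensor-product differential $\partial(e\otimes f)=\partial e\otimes f+(-1)^{|e|}e\otimes\partial f$ on $E\otimes F$; unwinding $[\partial,\phi\otimes\psi]$ for $\phi$ of degree $i$ gives exactly $[\partial,\phi]\otimes\psi+(-1)^i\phi\otimes[\partial,\psi]$, which is precisely the formula for $\partial$ on $\mathcal{A}_E\boxtimes\mathcal{A}_F$. For the face operators, $d_i$ on $\mathcal{A}_{E\otimes F}$ merely composes adjacent arguments $g_i, g_{i+1}$, and since $m_{E,F}$ feeds the \emph{same} string $(g_1,\dots,g_k)$ into both slots, we get $d_i(m_{E,F}(c\boxtimes c'))(g_1,\dots,g_{i}g_{i+1},\dots) = c(\dots g_ig_{i+1}\dots)\otimes c'(\dots g_ig_{i+1}\dots) = m_{E,F}(d_i(c)\boxtimes d_i(c'))$, matching $d_i(a\boxtimes b)=d_i(a)\boxtimes d_i(b)$ exactly. (One should also note the edge case where the string has fewer than $i+1$ entries, but this is handled uniformly.) Commutation of $m_{E,F}$ with $d$ then follows by linearity since $d=\sum_i(-1)^id_i$.

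The multiplicativity check is the one requiring care with signs, and I expect it to be the main obstacle — not because it is deep, but because several sign conventions interact: the sign $(-1)^{l(b)l(a')}$ in the product on $\mathcal{A}\boxtimes\mathcal{B}$, the Koszul sign hidden in how a composite $(\phi\otimes\psi)\circ(\phi'\otimes\psi')$ of maps between tensor products equals $(-1)^{l(\psi)l(\phi')}(\phi\circ\phi')\otimes(\psi\circ\psi')$, and the fact that $\circ$ on $\mathcal{A}_{E\otimes F}$ is the \emph{unsigned} concatenation product from Example \ref{explanation}. The computation is: $m_{E,F}(c\boxtimes c')\circ m_{E,F}(d\boxtimes d')$ evaluated on $(g_1,\dots,g_{k+k'})$ equals $\bigl(c(g_1,\dots,g_k)\otimes c'(g_1,\dots,g_k)\bigr)\circ\bigl(d(g_{k+1},\dots)\otimes d'(g_{k+1},\dots)\bigr)$, and applying the Koszul rule for composing tensor-product maps introduces exactly the factor $(-1)^{l(c')l(d)}$, after which one regroups into $\bigl(c\circ d\bigr)\otimes\bigl(c'\circ d'\bigr)$ evaluated appropriately, which is $m_{E,F}\bigl((c\boxtimes c')\circ(d\boxtimes d')\bigr)$ by the definition of $\circ$ on $\mathcal{A}_E\boxtimes\mathcal{A}_F$. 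The key identity to isolate and state cleanly is the Koszul sign for composition of maps of graded vector spaces tensored together; once that is pinned down the rest is bookkeeping. Finally, since a morphism of DB-algebras is by definition a bidegree-preserving linear map commuting with $\partial$, the $d_i$, and $\circ$, assembling the three verifications completes the proof.
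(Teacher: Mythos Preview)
Your proposal is correct and is exactly the routine verification the paper has in mind; the paper states the proposition without proof, treating it as a straightforward check of the DB-algebra axioms. Your careful tracking of the Koszul sign in the multiplicativity step and of the derivation identity $[\partial,\phi\otimes\psi]=[\partial,\phi]\otimes\psi+(-1)^{l(\phi)}\phi\otimes[\partial,\psi]$ is precisely what is needed.
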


For later use we mention here that, for any DB-algebra $\mathcal{A}$, there is a natural action of the group $S_{m}$ on $\mathcal{A}^{\boxtimes m}$. For $\sigma\in S_{m}$, the associated automorphism of $\mathcal{A}^{\boxtimes m}$ is denoted by $\hat{\sigma}$. To define $\hat{\sigma}$, it suffices to describe it when $\sigma=\tau_{i,i+1}$ is a transposition that interchanges the positions $i$ and $i+1$; in this case: 
\[
 \hat{\sigma}(a_{1}\boxtimes\ldots\boxtimes a_{m})=(-1)^{ll'}a_{1}\boxtimes\ldots\boxtimes a_{i-1}\boxtimes a_{i+1}\boxtimes a_{i}\boxtimes \ldots\boxtimes a_{m},
\]
for $a_{i}\in\mathcal{A}^{k}(l)$, $a_{i+1}\in\mathcal{A}^{k'}(l')$. It is not difficult to see that this defines 
an action of $S_{m}$ on $\mathcal{A}^{\boxtimes m}$ by automorphisms of DB-algebras.

\section{The Maurer-Cartan DB-algebra}


As explained in Proposition \ref{rep-MC}, representations up to homotopy structures on a complex of vector bundles correspond to Maurer-Cartan elements on the associated DGA. This observation allows one to translate the problem of constructing tensor products of representations up to homotopy to that of constructing Maurer-Cartan elements on the DGAs associated to tensor products of DB-algebras. Clearly, this problem can be treated at a universal level. This brings us to the Maurer-Cartan DB-algebra, which is the universal DB-algebra for Maurer-Cartan elements.

\begin{definition} \label{def-abstr-Omega}
For a DB-algebra $\mathcal{A}$, we denote by $MC_{1}(\bar{\mathcal{A}})$ the set of Maurer-Cartan elements of $\bar{\mathcal{A}}$ whose zeroth component vanishes. A \textbf{Maurer-Cartan algebra} is a DB-algebra $\Omega$, together with a Maurer-Cartan element $L\in MC_{1}(\bar{\Omega})$ with the property that for any DB-algebra $\mathcal{A}$, the map 
\[
 \textrm{Hom}_{\underline{\mathcal{DB}ar}^{}}(\Omega,\mathcal{A})\rmap MC_{1}(\bar{\mathcal{A}}),\phi\mapsto\phi(L)
\]
is a bijection. 
\end{definition}

\begin{theorem}\label{MC-exists} 
The Maurer-Cartan DB-algebra exists and is unique up to isomorphisms of DB-algebras. Moreover, for each $k$, $H^{l}(\Omega^{k}(\bullet),\partial)=0$ for all $l\neq0$.
\end{theorem}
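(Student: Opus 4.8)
The plan is to get uniqueness for free from the universal property, to construct $\Omega$ by generators and relations so that the universal Maurer--Cartan element is visibly present, and then to read off the cohomological statement from the shape of that construction.

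For uniqueness, the universal property does all the work. If $(\Omega,L)$ and $(\Omega',L')$ both satisfy Definition~\ref{def-abstr-Omega}, then applying the universal property of $\Omega$ with $\mathcal{A}=\Omega'$ produces a unique DB-morphism $\phi\colon\Omega\to\Omega'$ with $\phi(L)=L'$, and symmetrically a unique $\psi\colon\Omega'\to\Omega$ with $\psi(L')=L$; since $\psi\circ\phi$ and $\uid_\Omega$ both fix $L$, they agree by the uniqueness clause applied with $\mathcal{A}=\Omega$, and likewise $\phi\circ\psi=\uid_{\Omega'}$. So the real content is to construct $\Omega$ and to establish the cohomology vanishing.

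To pin down the right generators I would spell out the Maurer--Cartan equation $d_{\mathrm{tot}}L+L\star L=0$ for a series $L=L_1+L_2+\cdots$ with $L_k$ of bidegree $(k,1-k)$; collecting the part of $k$-degree $n$ gives
\[
 \partial L_n \;=\; dL_{n-1}-\sum_{\substack{i+j=n\\ i,j\ge 1}}L_i\star L_j \;=:\;\rho_n ,\qquad n\ge 1 ,
\]
with $L_0=0$ (so $\rho_1=0$) and $d=\sum_i(-1)^id_i$. This says precisely that $\Omega$ should be generated as a DB-algebra by symbols $L_k$, $k\ge1$, with $\partial L_k$ prescribed by $\rho_k$. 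Accordingly, let $\Omega$ be the free bigraded associative algebra on the symbols $d_IL_k$, where $d_I=d_{i_1}\cdots d_{i_r}$ runs over admissible composites of face operators modulo the simplicial identities $d_jd_i=d_id_{j-1}$; let the $d_i$ act tautologically on generators and extend them by \eqref{T-algebras-di}; and define $\partial$ on generators by $\partial(d_IL_k):=d_I\rho_k$, extended as a derivation via \eqref{T-algebras-Leibniz}. Then $L:=\sum_k L_k$ lies in $MC_1(\bar\Omega)$ by construction, and a DB-morphism $\Omega\to\mathcal{A}$ is the same as a choice of elements $a_k:=\phi(L_k)\in\mathcal{A}^k(1-k)$ with $\partial a_k=\rho_k(a)$ for all $k$, i.e.\ the same as an element of $MC_1(\bar{\mathcal{A}})$; hence the universal property holds as soon as $\Omega$ is genuinely a DB-algebra. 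The only non-bookkeeping point in that clause is that $\partial$ is well defined and $\partial^2=0$: compatibility of $\partial$ with the face operators is built in, while $\partial^2=0$ is the ``Bianchi identity'' --- since $\partial^2$ is a derivation it suffices to verify $\partial\rho_n=0$ on generators, which follows by induction on $n$ from the formula for $\rho_n$, the Leibniz rule for $\partial$, the product formula \eqref{T-algebras-di} for the $d_i$, and $d^2=0$ (itself forced by $d_jd_i=d_id_{j-1}$), exactly the cancellation that makes the Maurer--Cartan equation formally self-consistent. (Existence of $\Omega$ in isolation could also be obtained abstractly: the functor $\mathcal{A}\mapsto MC_1(\bar{\mathcal{A}})$ preserves limits on the locally presentable category $\underline{\mathcal{DB}ar}$ and satisfies a solution-set condition, hence is representable; but the explicit model is what the cohomology statement needs.)

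The cohomology statement is the substantial part. Fix $k$. As a bigraded vector space $\Omega^k(\bullet)=C_k\oplus\R\cdot L_k$, where $L_k$ sits in $l$-degree $1-k$ with $\partial L_k=\rho_k$, and $C_k$ is the $k$-degree-$k$ piece of the subalgebra generated by the $d_IL_j$ with $j<k$ (so $\rho_k\in C_k$, living in $l$-degree $2-k$); one checks moreover that $\Omega^k(l)=0$ unless $2-k\le l\le 0$, so the complex is bounded. Since $C_\bullet$ involves only strictly lower generators, one argues by induction on $k$ (the cases $k=1,2$ being checked directly), with an inductive hypothesis strong enough to control $C_k$ in the relevant range: for $k\ge 3$, that $H^l(C_k,\partial)=0$ for $l\notin\{0,2-k\}$ and that $H^{2-k}(C_k,\partial)$ is one-dimensional, spanned by the nonzero class $[\rho_k]$. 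Granting this, inspecting the two-term splitting $\Omega^k(\bullet)=C_k\oplus\R L_k$ shows that adjoining $L_k$ with boundary $\rho_k$ kills exactly the class $[\rho_k]$ and creates nothing new away from $l$-degree $0$, so $H^l(\Omega^k,\partial)=0$ for $l\ne 0$. The hard part --- and, I expect, the crux of the whole theorem --- is proving that inductive hypothesis, i.e.\ identifying the $\partial$-cocycles of $C_k$ (in bottom degree $2-k$, $C_k^{2-k}$ is spanned by the $d_iL_{k-1}$ and the products $L_a\circ L_b$ with $a+b=k$, and one must show the only closed combinations are the multiples of $\rho_k$). I would attack it by filtering $C_k$ by total word length in the generators $L_j$, so that the associated graded carries a Koszul-type differential admitting an explicit contracting homotopy; running the resulting spectral sequence should reduce the vanishing of $H^l(C_k)$ for $l\ne 0$ to a finite combinatorial verification and confirm that the only classes surviving in negative $l$-degrees are those supplied by the $\rho_k$.
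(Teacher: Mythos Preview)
Your treatment of uniqueness is standard and matches the paper. Your construction of $\Omega$ by generators and relations is also correct and is essentially the same model the paper builds: the paper realizes $\Omega$ as the free algebra on planar trees of height two, and then observes (formula~\eqref{Factorization}) that every such tree is uniquely of the form $d_{i_1}\cdots d_{i_m}L_k$, which is exactly your basis of generators. So on existence and uniqueness you are in agreement with the paper, just using a different language.

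The cohomology statement, however, is not proved in your proposal. You correctly isolate the decomposition $\Omega^k(\bullet)=C_k\oplus\R\cdot L_k$ with $\partial L_k=\rho_k\in C_k$, and you correctly identify what you would need: that $H^l(C_k,\partial)=0$ for $l\notin\{0,2-k\}$ and that $H^{2-k}(C_k,\partial)$ is one-dimensional, generated by $[\rho_k]$. But you do not establish this; you only outline a plan (``filter by word length, hope for a Koszul-type contracting homotopy, run a spectral sequence'') and explicitly flag it as ``the hard part''. That plan is not obviously workable as stated: the associated graded for the word-length filtration carries the piece of $\partial$ that does not change word length, and it is not clear that this piece admits the contracting homotopy you assert, nor that the residual differentials in the spectral sequence behave as you need. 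As written this is a genuine gap, not a routine omission.

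The paper avoids this difficulty entirely by giving a second, geometric description of $\Omega$: it identifies $\Omega^{k+1}(\bullet)$, as a chain complex under $\partial$, with the cellular chain complex $C_\bullet(I^k)$ of the $k$-cube for its standard CW structure (Section~\ref{sub: Cube Description }, equation~\eqref{Cubes-Forests}). Since $I^k$ is contractible, $H^l(\Omega^{k+1}(\bullet),\partial)=0$ for all $l\neq 0$ immediately. In fact this also explains your unproven inductive hypothesis: under the same identification, your $C_k$ is the cellular complex of the boundary $\partial I^{k-1}\cong S^{k-2}$, so its cohomology is $\R$ in $l$-degrees $0$ and $2-k$ and zero elsewhere, with the class in degree $2-k$ represented by the boundary of the top cell, i.e.\ by $\rho_k$. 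If you want to salvage your inductive approach, this cube identification is the missing ingredient; but once you have it, the induction is unnecessary.
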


The uniqueness follows by standard arguments. The aim of this section is to provide several explicit descriptions of $\Omega$, proving in particular the theorem above. 
The main conclusion of this section is the resulting reformulation of the notion of representation up to homotopy in terms of $\Omega$: 

\begin{corollary} \label{cor-rep-omega} Given a Lie groupoid $G$ over $M$
and a complex of vector bundles $(E,\partial)$, there is a one-to-one correspondence between sequences of operations $R=\{R_{k}\}_{k\geq1}$ making $(E,\partial,R_{k})$ into a representation up to homotopy of $G$ and morphisms of DB-algebras
\[ k_{E,R}:\Omega\rmap\mathcal{A}_{E}.\]
\end{corollary}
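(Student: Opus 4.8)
The plan is to derive the corollary by composing two bijections that are already in place: the identification of representations up to homotopy on $(E,\partial)$ with Maurer-Cartan elements of $\bar{\mathcal{A}}_E$ (Proposition \ref{rep-MC}), and the universal property of the Maurer-Cartan DB-algebra $\Omega$ (Theorem \ref{MC-exists} together with Definition \ref{def-abstr-Omega}). So there is essentially nothing new to prove; the work is purely organizational, matching the normalizations on the two sides.

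First I would recall from Proposition \ref{rep-MC} that a sequence $\{R_{k}\}_{k\geq 1}$ with $R_{k}\in\mathcal{A}_{E}^{k}(1-k)$ makes $(E,\partial,R_{k})$ into a representation up to homotopy of $G$ if and only if $R_{E}:=R_{1}+R_{2}+\ldots$ is a Maurer-Cartan element of $\bar{\mathcal{A}}_{E}$. The next step is a bookkeeping observation about degrees: a Maurer-Cartan element has total degree $1$, i.e.\ lies in $\bar{\mathcal{A}}_{E}^{1}=\prod_{k\geq 0}\mathcal{A}_{E}^{k}(1-k)$, so its $k$-th component automatically has bidegree $(k,1-k)$, exactly as prescribed in Definition \ref{definitionunital}; moreover the component with $k=0$ would lie in $\mathcal{A}_{E}^{0}(1)=\Gamma(M,\textrm{Hom}^{1}(E,E))$, which is precisely the part that is required to vanish when one passes from arbitrary Maurer-Cartan elements to $MC_{1}(\bar{\mathcal{A}}_{E})$. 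Hence the correspondence of Proposition \ref{rep-MC} restricts to a bijection between representations up to homotopy on the complex $(E,\partial)$ and the set $MC_{1}(\bar{\mathcal{A}}_{E})$.

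Then I would invoke the universal property. By Theorem \ref{MC-exists} the Maurer-Cartan DB-algebra $(\Omega,L)$ exists, and Definition \ref{def-abstr-Omega}, applied to $\mathcal{A}=\mathcal{A}_{E}$, gives a bijection
$$
\textrm{Hom}_{\underline{\mathcal{DB}ar}}(\Omega,\mathcal{A}_{E})\;\xrightarrow{\ \sim\ }\;MC_{1}(\bar{\mathcal{A}}_{E}),\qquad \phi\mapsto\phi(L),
$$
where $\phi(L)$ is understood via the functor $\overline{K}$ (a morphism of DB-algebras induces a morphism of the completed DGAs, and morphisms of complete DGAs send Maurer-Cartan elements to Maurer-Cartan elements, cf.\ the appendix). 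Composing this with the bijection of the previous paragraph yields the asserted one-to-one correspondence; concretely, the morphism $k_{E,R}$ attached to $R=\{R_{k}\}_{k\geq 1}$ is the unique DB-algebra morphism $\Omega\rmap\mathcal{A}_{E}$ with $k_{E,R}(L)=R_{E}$.

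I do not expect a genuine obstacle: all the substantive content sits in Proposition \ref{rep-MC} and in the existence half of Theorem \ref{MC-exists}. The only point that deserves an explicit sentence is the degree count in the second paragraph, ensuring that ``Maurer-Cartan element with vanishing zeroth component'' is literally the same datum as ``a family $R_{k}$ indexed by $k\geq 1$ with $R_{k}$ of degree $1-k$'', so that the indexing conventions of Definition \ref{definitionunital} and of $MC_{1}$ line up on the nose.
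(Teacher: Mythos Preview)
Your proposal is correct and matches the paper's intended argument: the paper offers no separate proof of the corollary, presenting it as the immediate composition of Proposition~\ref{rep-MC} with the universal property of $(\Omega,L)$ from Definition~\ref{def-abstr-Omega} and Theorem~\ref{MC-exists}. Your extra sentence verifying that the Maurer-Cartan elements appearing in Proposition~\ref{rep-MC} are exactly those of $MC_{1}(\bar{\mathcal{A}}_{E})$ is the one bookkeeping point worth making explicit, and you handle it correctly.
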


The map $k_{E,R}$, also denoted $k_{E}$,  will be called the \textbf{characteristic map} of the representation up to homotopy $(E,R)$.

\subsection{Description in terms of trees}

In our construction of $\Omega$, instead of proceeding abstractly and use generators and relations, we follow 
a pictorial approach. We start by explaining the main idea. Due to the expected universal property of $\Omega$, a representation
up to homotopy $(E, R_{E})$ is represented by its characteristic map 
$k_{E}: \Omega\rmap \mathcal{A}_{E}$ --uniquely determined by the fact that it sends the component $L_n$ of $L$ to the operation $R_{E}^{n}$. Hence 
general elements $A$ of $\Omega$ should encode certain operations 
$R_{E}^{A}$ on $E$ which arise by combining all the given operations $R_{E}^{n}$. 
There are various such operations one can think of. For instance, one has the following:
\begin{equation}
\label{oper-ex} 
(g_1, g_2, g_3, g_4, g_5, g_6)\mapsto R_{E}^2(g_1, g_2g_3) \circ R_{E}^{1}(g_4g_5)\circ R_{E}^{1}(g_6).
\end{equation}
The idea is to encode such operations graphically, by forests of height two. For instance, the operation above
is encoded by:
\begin{eqnarray*}
\BigFor
\end{eqnarray*}
and one should think of the six leaves as labelled by the six elements $g_1, \ldots , g_6$.

\begin{definition} 
We denote by $\RT$ the set of isomorphism classes of planar rooted trees whose leaves all have height $2$. We denote by $\RS$ the set of short forests; that is, the set of finite tuples $(T_{1,}\dots,T_{n})$  of trees in $\RT$. 
\end{definition}

We represent a short forest by joining the roots of the $T_{i}$'s by a horizontal line. For instance, 
\[
 \BigFor\quad\textrm{stands for}\quad\left(\quad\ForestBA\quad,\quad\ForestA\quad,\quad\ForestO\quad\right).
\]

Next, we introduce a bigrading on $\RS$.

\begin{definition} 
A branch of a short forest $F\in\RS$ is an edge that goes from a root to a vertex that is not a root. For any short forest $F$, we define 
\begin{eqnarray*}
 k(F) & = & \#\text{ of leaves of }F,\ \textrm{called\ the\ order\ of}\ F,\\
 b(F) & = & \#\text{ of branches of }F,\\
 r(F) & = & \#\text{ of roots of }F,\\
 l(F) & = & r(F)-b(F),\ \textrm{called\ the\ degree\ of}\ F.
\end{eqnarray*}
We denote by $\RS^{k}(l)$ the set of short forests of order $k$ and degree $l$.
\end{definition}

\begin{example} 
For the tree mentioned above,  
\[
 F=\;\BigFor\;,\; k(F)=6,\, b(F)=4,\, r(F)=3,\, l(F)=-1.
\]
The fact that $F\in S^6(-1)$ corresponds to the fact that the operation (\ref{oper-ex}) belongs to $\mathcal{A}_{E}^{6}(-1)$.
\end{example}

\begin{definition}
We denote by $(\CT,\circ)$ the free algebra over $\mathbb{R}$ generated by the trees in $\RT$ or, equivalently, the linear span over $\mathbb{R}$ of $\RS$ with the product given by the concatenation. 
\end{definition}

Pictorially, $F\circ F'$ is the forest obtained by joining the roots by an edge, as in the following example:
\[
 \ForestBA\,\,\circ\,\,\ForestAC\,\,=\,\,\BigFor\,\,\,.
\]
The bigrading on $\RS$ induces a similar bigrading on $\CT$ and allows us to talk about the spaces $\CT^{k}(l)$. 

\begin{definition} \label{def-delta} 
For each $i=1,\dots,k$, we define the operator
\[
 d_{i}:\CT^{k}(\bullet)\rightarrow\CT^{k+1}(\bullet),
\]
which acts by replacing the $i$-th leaf of a forest, counting from the left, by two leaves.

For each $l$, we define $\partial:\CT^{\bullet}(l)\rightarrow\CT^{\bullet}(l+1)$ by 
\begin{equation}
 \partial(F)=\sum_{j=1}^{l}(-1)^{j+1}\big(\partial_{j}^{1}F-\partial_{j}^{0}F\big), \label{differential formula}
\end{equation}
where $\partial_{j}^{1}F$ is obtained by separating the $j^{th}$ pair of adjacent branches (counted from left to right) and $\partial_{j}^{0}F$ by collapsing the $j^{th}$ pair of adjacent branches (counted from left to right).

Finally, we denote by $L_{n}$ the tree in $\CT$ that has one root, $n$ branches, and $n$ leaves:
\[
 L_{1}\;=\;\uno\;,\quad L_{2}\;=\;\dos\;,\quad L_{3}\;=\;\tres\;,\quad L_{4}\;=\;\quatro\;\quad\ldots,
\]  
and we set
\[
 L\; := \;L_{1}+L_{2}+\ldots\in\bar{\CT}.
\]
\end{definition}

\begin{example} 
Here is an example of the action, on the short forests, of the $d_i$ operators,
\[
 d_{2}\left(\,\,\,\ForestBAC\,\,\,\right)\,\,\,=\,\,\,\BigForB\,\,\,
\]
and, here, of the differential,
\[
 \partial\left(\,\,\ForestBAB\,\,\right)=\left(\,\,\,\,\ForestCAB\,\,\,\,\,-\,\,\ForestAAB\,\,\,\,\,\right)-\left(\,\,\,\,\ForestBAC\,\,\,-\,\,\,\,\ForestBAA\,\,\,\,\right).
\]
\end{example}

\subsection{Description in terms of words in three letters}

The set $\RS^{k+1}$ of short forests with $k+1$ leaves can be naturally identified with the set of words in three letters $\{a,b,c\}$ of length $k$ as follows. To a short forest $F$ with $k+1$ leaves $l_{1},\dots,l_{k+1}$, numbered from left to right, we associate the word $e_{1}\dots e_{k}$, where: 
\[
e_{i}=
\begin{cases}
 a & \text{if $l_{i}$ and $l_{i+1}$ belong to the same branch},\\
 b & \text{if $l_{i}$ and $l_{i+1}$ belong to different branches of the same tree},\\
 c & \text{if $l_{i}$ and $l_{i+1}$ belong to different trees.}
\end{cases}
\]
The unique short forest in $S(1)$, 
\[
 e:=\,\,\,\uno\;\;,
\]
corresponds to the empty word. The following figure illustrates this correspondence:

\medskip{}

\begin{center}
\includegraphics[scale=0.4]{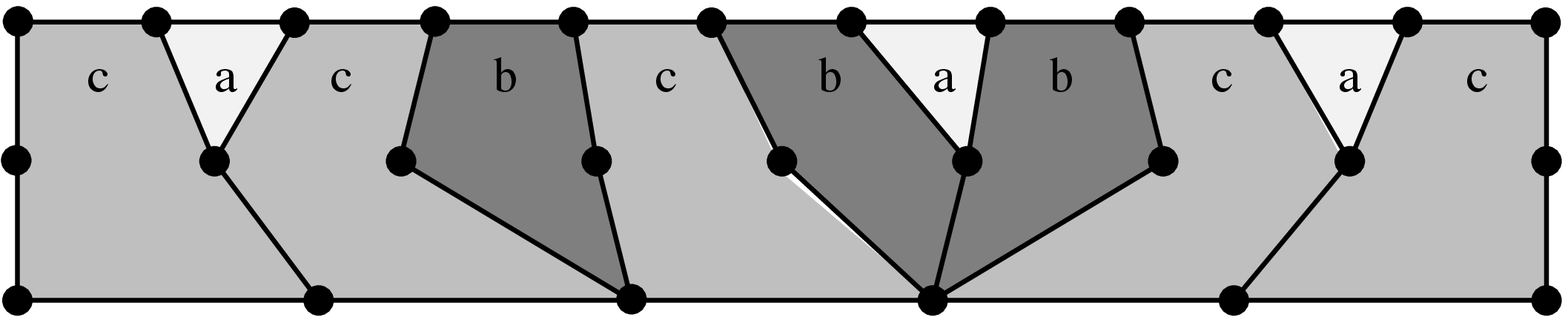} 
\par
\end{center}

\medskip{}

This construction identifies $\Omega$, as a vector space, with the free unital algebra  $F_{e}\langle a,b,c\rangle$ on the generators $a$, $b$ and $c$ with unit $e$. In terms of short forests: 
\[
 a=\,\,\,\ForestA\,\,\,,\quad b=\,\,\,\ForestB\,\,\,,\quad c=\,\,\,\ForestC\,\,\:,\quad e=\,\,\,\ForestO\,\,\,.
\]
The natural product coming from the concatenation of words will be denoted by $\diamond$. Note that $\diamond$ does not coincide with the product $\circ$ defined on $\Omega$. In terms of forests, $F\diamond F'$ is obtained by identifying the rightmost branch and leaf of $F$ with the leftmost branch and leaf of $F'$, as in the following example:
\[
 \ForestBA\,\,\diamond\,\,\ForestC\,\,=\,\,\ForestBAC\,\,.
\]

In terms of words on three letters, the algebraic structure of $\Omega$ has the following description: 

\begin{itemize}
 \item The product on $\Omega$ is given by: 
 \[
  T\circ T'=T\diamond c\diamond T'.
 \]
 
 \item The operator $d_{i}$ acts according to the formula: 
 \[
  d_{i}(e_{1}\ldots e_{k})=e_{1}\dots e_{i-1}ae_{i}\ldots e_{k}.
 \]
 
 \item The operator $\partial$ is the unique derivation with respect to $\diamond$; i.e., with the property that 
 \begin{eqnarray}
  \partial(F\diamond F') & = & \partial(F)\diamond F'+(-1)^{l(F)}F\diamond\partial(F'),\label{derivation}
 \end{eqnarray}
 given on generators by
 \begin{eqnarray}
  \partial\left(e\right)=\partial\left(a\right)=\partial\left(c\right)=0 &\textrm{and}& \partial(b)=c-a. \label{generators}
 \end{eqnarray}

 \item $L_{n}=b^{\diamond n-1}$ for $n\geq 2$,  and $L_1 = e$. 
\end{itemize}
The only statement in the list above that requires a proof is the derivation property \eqref{derivation}. Denote by $\hat{\partial}$ the operator defined on the generators as in \eqref{generators} and extended by derivation as in \eqref{derivation}. We need to prove that $\hat{\partial}=\partial$. Consider a word $T=e_{1}\diamond\ldots\diamond e_{k}$ with $e_{i}\in\{a,b,c\}$; then: 
\begin{equation}
 \hat{\partial}(T)=\sum_{e_{i}=b}(-1)^{l(w_{i})}e_{1}\diamond\dots\diamond  e_{i-1}\diamond(c-a)\diamond e_{i+1}\diamond\dots\diamond e_{k},
\end{equation}
where $l(w_{i})$ is the degree of $w_{i}=e_{1}\diamond\dots\diamond e_{i-1}$. On the other hand, 
\[
 e_{1}\diamond\dots\diamond e_{i-1}\diamond c\diamond e_{i+1}\diamond\dots\diamond e_{k}=\partial_{j}^{0}(T),
\]
\[
 e_{1}\diamond\dots\diamond e_{i-1}\diamond a\diamond e_{i+1}\diamond\dots\diamond e_{k}=\partial_{j}^{1}(T).
\]
Thus, from the formula in Definition \ref{def-delta}, we conclude that $\hat{\partial}=\partial$.

\subsection{Description in terms of faces of cubes \label{sub: Cube Description }}

The DB-algebra $\Omega$ can also be constructed in terms of the faces of cubes. Namely, the set of words in three letters can be naturally identified with the set of faces of the geometric cubes $[0,1]^{k}\subset\mathbb{R}^{k}$
as follows. To a word $F=e_{1}\dots e_{k}$, we associate the face
\[
\phi(F)=\psi(e_{1})\times\cdots\times\psi(e_{k})\subset[0,1]^{k},\]
with the convention that 
\begin{eqnarray*}
 \psi(e) & = & \left\{ \begin{array}{cl}
       \{0\} & \textnormal{if }e=a,\\
       \{1\} & \textnormal{if }e=c,\\
       {}[0,1] & \textnormal{if }e=b.
      \end{array}\right.
\end{eqnarray*}
This simply says that the cells of the cube $I^{k}$ are products of the cells of the interval, which we label as follows: $\{0\}=a,(0,1)=b$
and $\{1\}=c$. Thus, we identify the cells of $I^{k}$ with words of length $k$ in the letters $\{a,b,c\}$. Note that the dimension of a cell is the number of times that $b$ appears in the corresponding word. The following figure illustrates this bijection:

\medskip{}

\[
\ParamSquare{\ForestBC}{\ForestAC}{\ForestCC}{\ForestAB}{\ForestBB}{\ForestCB}{\ForestAA}{\ForestCA}{\ForestBA}
\]

\medskip{}

In this correspondence, a forest of degree $-l$ with $k+1$ leaves is sent to a $l$-dimensional face of the $k$-dimensional cube. Thus,
we obtain the identification \begin{eqnarray}\big(\Omega^{k+1}(\bullet),\partial\big) \cong \big(C_{\bullet}(I^{k}),\partial\big),\, k\geq0,\label{Cubes-Forests}\end{eqnarray} where $C_{\bullet}(I^{k})$ is the cellular chain complex computing the homology of the $k$-dimensional cube with respect to the natural cell decomposition and negative grading of the cells. Also, one easily shows that: 

\begin{itemize}
 \item The product $F_{1}\diamond F_{2}$ corresponds to the Cartesian product of cells $\phi(F_{1})\times\phi(F_{2})$. This also shows that the product $F_{1}\circ F_{2}$ corresponds to the operation $\phi(F_{1})\times\{1\}\times\phi(F_{2})$. 
 \item The operator $\partial$ corresponds to the boundary operator in $C_\bullet(I^{k})$. 
 \item $L_{k}$ corresponds to the highest degree cell in $I^{k-1}$. 
 \item The operators $d_{i}$ correspond to the various ways of embedding the $k$-cube into the $(k+1)$-cube as a $k$-face having the origin as one of its vertices. This is illustrated in the figure below:
\end{itemize}

\begin{center}
\includegraphics[scale=0.3]{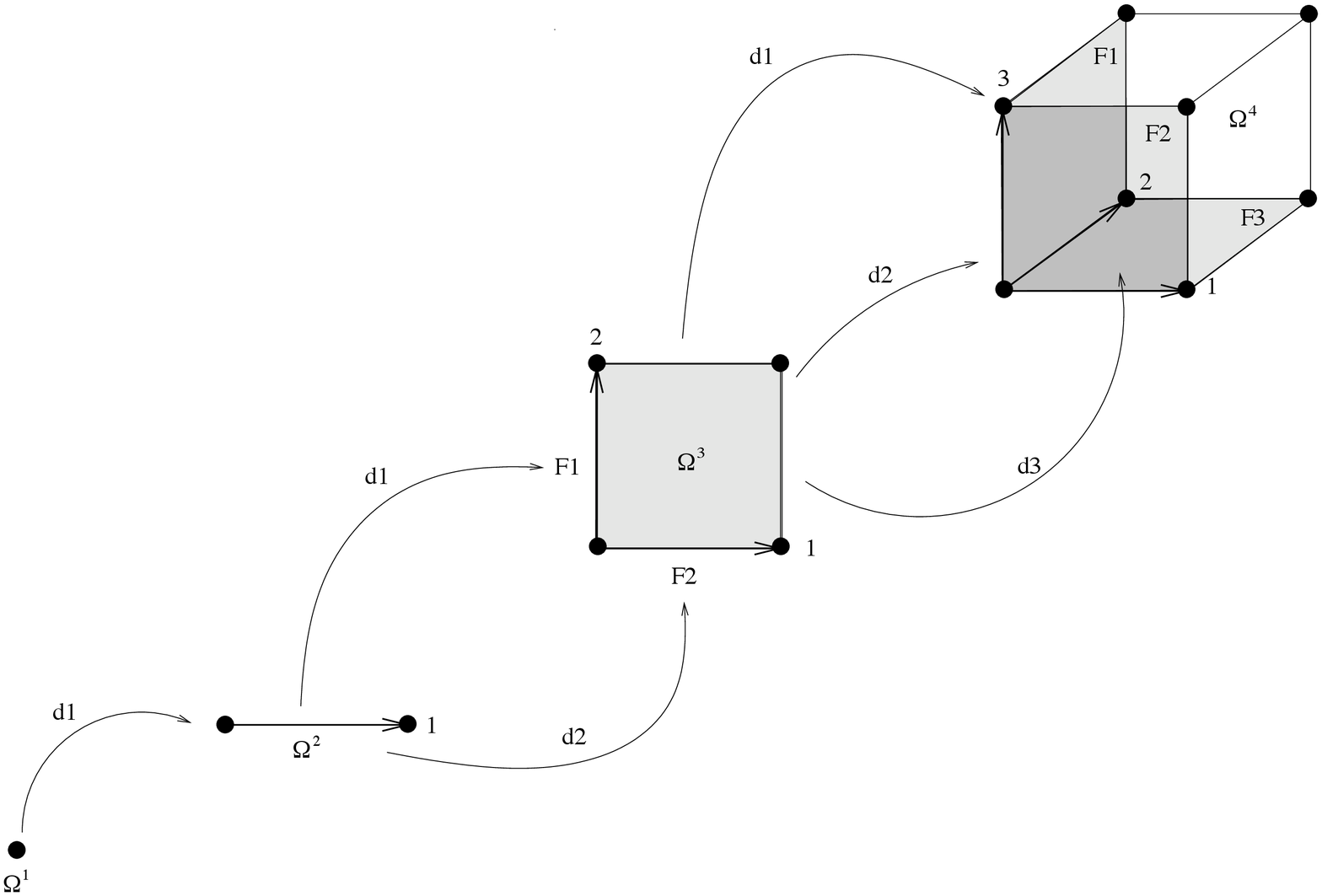} 
\par\end{center}

\subsection{Proof of Theorem \ref{MC-exists}}

We will now prove that $(\CT,\circ,d_{i},\partial)$, together with
\[
 L=L_{1}+L_{2}+\dots,
\]
satisfies the universal property of the Maurer-Cartan DB-algebra. The fact that $\CT$ is a DB-algebra is quite straightforward now. For instance, to check that $\partial^{2}=0$, one uses the description of $\partial$ as a derivation with respect to $\diamond$, and one is left with checking this equation on the elements $a$, $b$ and $c$. We should also prove that $\partial$ is a derivation with respect to the product $\circ$. Using the expression of $\circ$ in terms of $\diamond$, we find: 
\begin{eqnarray*}
 \partial(T\circ T') & = & \partial(T\diamond c\diamond T')=\partial(T)\diamond c\diamond T'+(-1)^{l(T)}\partial(c\diamond T')\\
 & = & \partial(T)\diamond c\diamond T'+(-1)^{l(T)}c\diamond\partial(T')=\partial(T)\circ T'+(-1)^{l(T)}\circ\partial(T').
\end{eqnarray*}
Next, to see that $L$ is a Maurer-Cartan element, one has to show that, for each $k$, 
\[
 \partial(L_{k})=\sum_{j=1}^{k-1}(-1)^{j+1}L_{j}\circ L_{k-j}+\sum_{j=1}^{k-1}(-1)^{j}d_{j}(L_{k-1}),
\]
which follows immediately from: 
\begin{gather*}
 \partial_{j}^{1}L_{k}=b^{\diamond j}\diamond c\diamond b^{\diamond(k-j)}=L_{j}\circ L_{k-j,}\\
 \partial_{j}^{0}L_{k}=b^{\diamond j}\diamond a\diamond b^{\diamond(k-j)}=d_{j}L_{k-1}.
\end{gather*}
For the universality property of $(\CT,L)$ it is enough to remark that every forest $F\in\CT$ can be written uniquely as
\begin{equation}
 F=d_{i_{1}}\dots d_{i_{m}}(L_{k_{1}}\circ\dots\circ L_{k_{s}}),\label{Factorization}
\end{equation}
with $i_{1}>\dots>i_{m}$.

Finally, the statement about the cohomology follows from the identification with the cellular complexes of the cubes.

\section{Tensor products of representations up to homotopy}

The main conclusion of the previous section is that, with $\Omega$ at hand,  representations up to homotopy are characterized by their characteristic maps (see Corollary \ref{cor-rep-omega}). From this point of view, the construction of tensor products of representations up to homotopy amounts to the construction of diagonal maps on the DB-algebra $\Omega$. For each $m>0$, we consider the DB-algebra
\[
 \Omega_{m}=\underbrace{\Omega\boxtimes\ldots\boxtimes\Omega}_{m-\textrm{times}},
\]
as well as the associated complete DGA $\bar{\Omega}_{m}$. In what follows, $e\in \Omega$ stands for the component $L_1$ of the universal Maurer-Cartan element $L$. 

\begin{definition} 
A \textbf{universal Maurer-Cartan element} of length $m$ is any Maurer-Cartan element $\omega$ of $\bar{\Omega}_{m}$ with the property that 
its degree $1$ component is 
\begin{equation}\label{general-MC}
 \omega_1=e^{\boxtimes m}
\end{equation}
We denote by $\mathcal{MC}_{m}$ the set of such Maurer-Cartan elements. 
\end{definition}

Due to the universal property of $\Omega$, elements $\omega\in\mathcal{MC}_{m}$ can be identified with morphisms in $\underline{\mathcal{DB}ar}$
\[
 \Delta_{\omega}:\Omega\rmap\Omega_{m},
\]
such that $\Delta_{\omega}(e)=e^{\boxtimes m}$. This last condition will allow us to recover the usual diagonal tensor product of (strict) representations. 

\begin{definition} 
A universal Maurer-Cartan element $\omega$ is said to be
\begin{itemize}
 \item \textbf{symmetric} if $\hat{\sigma}(\omega)=\omega$ for all $\sigma\in S_{m}$ (for the action of $S_m$, see subsection \ref{The tensor product of DB-algebras}),
 \item \textbf{associative} when $m=2$ and the induced map $\Delta_{\omega}:\Omega\rmap\Omega\boxtimes\Omega$ is coassociative. 
\end{itemize}
\end{definition}

Coming back to representations up to homotopy, it is now clear that universal Maurer-Cartan elements induce tensor product operations. For instance, using the DB-morphisms $\Delta_{\omega}$ and $m_{E,F}:\mathcal{A}_{E}\boxtimes\mathcal{A}_{F}\rmap\mathcal{A}_{E\otimes F}$ of Proposition \ref{mEF}, we now define:

\begin{definition} 
Given $\omega\in\mathcal{MC}_{2}$ and two representations up to homotopy $E$ and $F$, we define $E\otimes_{\omega}F$ as the representation up to homotopy with the characteristic map 
\[
 k_{E\otimes_{\omega}F}=m_{E,F}\circ(k_{E}\boxtimes k_{F})\circ\Delta_{\omega}.
\]
\end{definition}

From the second part of Proposition \ref{rep-MC} and the naturality of the construction we deduce the following:

\begin{corollary} 
The operations $\otimes_{\omega}$ have the following properties: 
\begin{enumerate}
 \item [(1)] Any strong gauge equivalence between $\omega,\omega'\in\mathcal{MC}_{2}$ induces a strong isomorphism between $E\otimes_{\omega}F$ and $E\otimes_{\omega'}F$ (see Definitions \ref{def-strong} and \ref{strong-appendix}). 
 \item [(2)] If $\omega$ is associative or symmetric, then so is the operation $\otimes_{\omega}$. 
\end{enumerate}
\end{corollary}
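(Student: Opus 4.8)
The plan is to transport both assertions to the language of Maurer--Cartan elements and characteristic maps, using Corollary \ref{cor-rep-omega} and Proposition \ref{rep-MC}, and then to exploit three pieces of naturality: functoriality of $\overline{K}$ and of $E\mapsto\mathcal{A}_{E}$; the fact that the maps $m_{E,F}$ of Proposition \ref{mEF} are compatible with the flip of tensor factors and with iterated tensor products; and naturality of the swap $\hat\tau$ on $\boxtimes$. Throughout I use that $\omega\in\mathcal{MC}_{2}$ is the same as a DB-morphism $\Delta_{\omega}:\Omega\rmap\Omega\boxtimes\Omega$ with $\Delta_{\omega}(L)=\omega$ and $\Delta_{\omega}(e)=e^{\boxtimes 2}$, and that $E\otimes_{\omega}F$ is, by definition, the representation up to homotopy whose Maurer--Cartan element in $\bar{\mathcal{A}}_{E\otimes F}$ is $\overline{K}\big(m_{E,F}\circ(k_{E}\boxtimes k_{F})\big)(\omega)$.

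For (1), I would first note that $\Delta_{\omega}(e)=e^{\boxtimes 2}=\Delta_{\omega'}(e)$ forces the underlying complex $(E\otimes F,\partial)$ and the $R_{1}$-term (the diagonal quasi-action) of $E\otimes_{\omega}F$ and $E\otimes_{\omega'}F$ to coincide, so the notion of strong isomorphism (Definition \ref{def-strong}) is applicable to them. Then I would invoke the general fact from the appendix that a morphism of complete DGAs, being filtration preserving, carries strong gauge equivalences to strong gauge equivalences; applying it to $\overline{K}\big(m_{E,F}\circ(k_{E}\boxtimes k_{F})\big)$ turns a strong gauge equivalence between $\omega$ and $\omega'$ into a strong gauge equivalence between the two Maurer--Cartan elements of $\bar{\mathcal{A}}_{E\otimes F}$ that define $E\otimes_{\omega}F$ and $E\otimes_{\omega'}F$. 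Proposition \ref{rep-MC}(2) then converts this into a strong isomorphism $E\otimes_{\omega}F\cong E\otimes_{\omega'}F$.

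For (2), the first step is to record two formal properties of $m$, both read off directly from $m_{E,F}(c\boxtimes c')(g_{1},\dots,g_{k})=c(g_{1},\dots,g_{k})\otimes c'(g_{1},\dots,g_{k})$: writing $\tau_{E,F}:E\otimes F\rmap F\otimes E$ for the Koszul flip and $(\tau_{E,F})_{*}:\mathcal{A}_{E\otimes F}\rmap\mathcal{A}_{F\otimes E}$ for the induced DB-isomorphism, one has $(\tau_{E,F})_{*}\circ m_{E,F}=m_{F,E}\circ\hat\tau$ and $m_{E\otimes F,H}\circ(m_{E,F}\boxtimes\uid)=m_{E,F\otimes H}\circ(\uid\boxtimes m_{F,H})$ under the identification $(E\otimes F)\otimes H=E\otimes(F\otimes H)$. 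If $\omega$ is symmetric, then $\hat\tau(\omega)=\omega=\Delta_{\omega}(L)$ together with the universal property of $\Omega$ (noting that $\hat\tau\circ\Delta_{\omega}$ is again a DB-morphism) gives $\hat\tau\circ\Delta_{\omega}=\Delta_{\omega}$; combining this with naturality of $\hat\tau$ and the symmetry of $m$,
\begin{align*}
 (\tau_{E,F})_{*}\circ k_{E\otimes_{\omega}F}
 &= m_{F,E}\circ\hat\tau\circ(k_{E}\boxtimes k_{F})\circ\Delta_{\omega}\\
 &= m_{F,E}\circ(k_{F}\boxtimes k_{E})\circ\hat\tau\circ\Delta_{\omega}
 = k_{F\otimes_{\omega}E},
\end{align*}
so the Koszul flip $\tau_{E,F}$, extended by zero in the higher components, is an isomorphism $E\otimes_{\omega}F\rmap F\otimes_{\omega}E$ of representations up to homotopy, natural in $E$ and $F$; thus $\otimes_{\omega}$ is symmetric. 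If instead $\omega$ is associative, then $(\Delta_{\omega}\boxtimes\uid)\circ\Delta_{\omega}=(\uid\boxtimes\Delta_{\omega})\circ\Delta_{\omega}$, and the analogous diagram chase---now invoking functoriality of $\boxtimes$ and associativity of $m$---yields $k_{(E\otimes_{\omega}F)\otimes_{\omega}H}=k_{E\otimes_{\omega}(F\otimes_{\omega}H)}$ under $(E\otimes F)\otimes H=E\otimes(F\otimes H)$, i.e.\ $\otimes_{\omega}$ is associative.

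The argument is essentially formal, so the points I expect to cost the most effort are the sign bookkeeping in the two compatibility identities for $m$ (keeping track of the Koszul signs in the definition of $\boxtimes$, in $\hat\tau$, and in $\tau_{E,F}$), and a small complement to Proposition \ref{rep-MC}: that an isomorphism of complexes $\phi_{0}:(E,\partial)\xrightarrow{\sim}(E',\partial')$ induces an isomorphism of complete DGAs $\bar{\mathcal{A}}_{E}\cong\bar{\mathcal{A}}_{E'}$ sending the Maurer--Cartan element of a representation up to homotopy on $(E,\partial)$ to that on $(E',\partial')$ precisely when $\phi_{0}$, with vanishing higher components, is a morphism in $\Rep^{\infty}(G)$ --- this is what lets me read an honest morphism of representations up to homotopy off the displayed identities in (2). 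That the adjective ``strong'' survives in (1) needs no separate argument, since every map in play preserves the filtration by $k$-degree.
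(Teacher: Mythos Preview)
Your proposal is correct and follows essentially the same approach as the paper. The paper does not give an explicit proof of this corollary: it simply says the result follows ``from the second part of Proposition \ref{rep-MC} and the naturality of the construction'', and what you have written is a careful unpacking of exactly that naturality (the compatibility of $m_{E,F}$ with the flip and with iterated tensor products, together with the fact that filtered DGA morphisms preserve strong gauge equivalences).
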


Similarly, any universal Maurer-Cartan element $\omega$ of length $m$ induces a tensor product operation on $m$-arguments $\otimes_{\omega}(E_{1},\ldots,E_{m})$, defined by 
\[ 
 k_{\otimes_{\omega}(E_{1},\ldots,E_{m})}=m_{E_{1},\ldots,E_{m}}\circ(k_{E_{1}}\boxtimes\ldots\boxtimes k_{E_{m}})\circ\Delta_{\omega}.
\]
As before, a gauge equivalence between Maurer-Cartan elements induces a strong isomorphism between the corresponding tensor products. Also, if $\omega$ is symmetric, so is the associated tensor product.

\begin{corollary} 
A symmetric universal Maurer-Cartan element $\omega$ of length $m$  induces a symmetric power operation $E\mapsto S^{m}E$ on representations up to homotopy. 
\end{corollary}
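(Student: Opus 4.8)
\emph{Sketch of proof.} The plan is to produce $S^{m}E$ as the image of the symmetrization idempotent acting on the $m$-fold $\omega$-tensor power $E^{\otimes_\omega m}:=\otimes_\omega(E,\dots,E)$, the hypothesis $\hat\sigma(\omega)=\omega$ being precisely what makes the symmetric group act on $E^{\otimes_\omega m}$ by automorphisms of representations up to homotopy. First I would set up this action. Equip the complex $(E^{\otimes m},\partial)$ with the usual permutation action with Koszul signs; this is an action of $S_m$ by cochain bundle automorphisms $\tau_\sigma$ of $(E^{\otimes m},\partial)$. Conjugation by the $\tau_\sigma$ then defines an action $\rho$ of $S_m$ on the DB-algebra $\mathcal{A}_{E^{\otimes m}}$ by DB-algebra automorphisms, and a short sign computation --- essentially the one already used to check that $\hat\sigma$ is an action --- shows that the canonical morphism $m_{E,\dots,E}:\mathcal{A}_E^{\boxtimes m}\rmap\mathcal{A}_{E^{\otimes m}}$ of Proposition~\ref{mEF} is equivariant, i.e. $m_{E,\dots,E}\circ\hat\sigma=\rho(\sigma)\circ m_{E,\dots,E}$ for every $\sigma\in S_m$.

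Next I would bring in the symmetry of $\omega$. Since $\hat\sigma$ is a DB-algebra automorphism of $\Omega_m$, the composite $\hat\sigma\circ\Delta_\omega$ is again a DB-algebra morphism $\Omega\rmap\Omega_m$ and it sends the universal Maurer--Cartan element $L$ to $\hat\sigma(\omega)=\omega=\Delta_\omega(L)$; by the universal property of $\Omega$ this forces $\hat\sigma\circ\Delta_\omega=\Delta_\omega$. Combining this with the equivariance of $m_{E,\dots,E}$ and with the obvious identity $(k_E\boxtimes\dots\boxtimes k_E)\circ\hat\sigma=\hat\sigma\circ(k_E\boxtimes\dots\boxtimes k_E)$ (all $m$ factors being the same morphism), one obtains
\begin{align*}
 \rho(\sigma)\circ k_{E^{\otimes_\omega m}}
 &=\rho(\sigma)\circ m_{E,\dots,E}\circ(k_E\boxtimes\dots\boxtimes k_E)\circ\Delta_\omega\\
 &=m_{E,\dots,E}\circ(k_E\boxtimes\dots\boxtimes k_E)\circ\hat\sigma\circ\Delta_\omega
 =k_{E^{\otimes_\omega m}}.
\end{align*}
Unravelling the definition of $\rho$, this says that each $\tau_\sigma$ commutes with $\partial^{E^{\otimes m}}$ and with every structure operator $R_k$ of $E^{\otimes_\omega m}$; equivalently, the strict map $\Phi=(\tau_\sigma,0,0,\dots)$ is an isomorphism of the representation up to homotopy $E^{\otimes_\omega m}$ with itself.

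Since everything is defined over $\mathbb{R}$, the average $\pi:=\tfrac{1}{m!}\sum_{\sigma\in S_m}\tau_\sigma$ is then a degree-zero idempotent endomorphism of the vector bundle $E^{\otimes m}$ commuting with $\partial$ and with all the $R_k$. Its image $S^{m}E:=\operatorname{Im}(\pi)$ is a subbundle --- the fibrewise rank equals the trace of $\pi$, hence is locally constant --- and it is invariant under $\partial$ and under all the $R_k$. Restricting these operators to $S^{m}E$ gives a sequence that still satisfies the structure equations~(\ref{structure equations}) (the restriction of a solution to an invariant subbundle is again a solution), so $(S^{m}E,\partial|_{S^{m}E},R_k|_{S^{m}E})$ is a representation up to homotopy; this is the symmetric power $S^{m}E$. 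The remaining points are routine: the assignment $E\mapsto S^{m}E$ is functorial in $E$; by naturality together with Proposition~\ref{rep-MC}, a strong gauge equivalence between two symmetric choices of $\omega$ induces a strong isomorphism between the corresponding symmetric powers; and, if $\omega$ is chosen so that $\otimes_\omega$ preserves unitality, then $S^{m}E$ is unital whenever $E$ is, unitality being inherited by invariant subbundles.

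I expect the only real difficulty to be the sign bookkeeping in the first step: one has to fix the Koszul conventions so that the permutation action on $E^{\otimes m}$, the action $\hat\sigma$ on $\mathcal{A}_E^{\boxtimes m}$, and conjugation on $\mathcal{A}_{E^{\otimes m}}$ are all compatible through $m_{E,\dots,E}$. Everything afterwards --- the universal-property identity $\hat\sigma\circ\Delta_\omega=\Delta_\omega$, the splitting of $\pi$ off a subbundle, and the restriction of the structure operators to it --- is formal.
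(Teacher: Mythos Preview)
Your argument is correct and is exactly the construction the paper has in mind: the paper states this corollary without proof, treating it as immediate from the line just above (``if $\omega$ is symmetric, so is the associated tensor product''), and you have spelled out the details of why a symmetric $m$-fold tensor power carries an $S_m$-action by strict automorphisms and hence splits off its symmetric part as a sub-representation up to homotopy. The only additions beyond what the paper needs are your remarks on functoriality, independence of the choice of symmetric $\omega$, and unitality, all of which are correct but not required for the bare statement.
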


In this section we study the universal Maurer-Cartan elements. First of all, we 
clarify their existence and uniqueness.

\begin{theorem} \label{existence-diagonal} 
For each $m>0$, we have the following:
\begin{enumerate}
 \item Symmetric universal Maurer-Cartan elements of length $m$ exist. 
 \item Any two universal Maurer-Cartan elements of length $m$ (symmetric or not) are strongly gauge equivalent, and any two gauge equivalences are homotopic. 
\end{enumerate}
\end{theorem}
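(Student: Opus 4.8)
The plan is to reduce everything to general homological facts about Maurer-Cartan elements in complete DGAs, applied to the specific DGA $\bar{\Omega}_m$, and then to handle the symmetry by an averaging argument. The key structural input, from Theorem \ref{MC-exists}, is that $H^l(\Omega^k(\bullet),\partial)=0$ for $l\neq 0$; since $\boxtimes$ is built from tensor products over $\mathbb{R}$ and $\partial$ on $\Omega_m$ is the graded tensor-product differential, a Künneth argument shows that $H^l((\Omega_m)^k(\bullet),\partial)=0$ for $l\neq 0$ as well, for every $k$. Writing the total differential as $d_{\mathrm{tot}}=\partial+(-1)^n d$, the $\partial$-direction is thus concentrated in degree $l=0$; this is exactly the acyclicity hypothesis needed to run an obstruction-theoretic argument component by component in the $k$-degree.

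For existence of \emph{some} universal Maurer-Cartan element I would argue inductively. Fix $\omega_1=e^{\boxtimes m}$, which by \eqref{general-MC} is forced; one checks directly that the Maurer-Cartan equation holds modulo $F_2\bar{\Omega}_m$ (this uses only that $e$ is the identity-like component $L_1$ and the defining relations of $\Omega$). Suppose $\omega_1+\cdots+\omega_{n-1}$ satisfies the Maurer-Cartan equation modulo $F_n$. The obstruction to extending is the degree-$(n+1)$, $k$-degree-$n$ component of $d_{\mathrm{tot}}(\sum\omega_i)+(\sum\omega_i)\star(\sum\omega_i)$, which a standard computation (Bianchi-type identity, as recalled in the appendix) shows is $d_{\mathrm{tot}}$-closed; since it lives in $(\Omega_m)^n(1)$ and $H^1((\Omega_m)^n(\bullet),\partial)=0$, it is $\partial$-exact (the $d$-part contributes to higher $k$-degree and does not obstruct), so we can solve for $\omega_n\in(\Omega_m)^n(1-n)$. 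This produces $\omega\in\mathcal{MC}_m$. For part (2), the comparison of two such $\omega,\omega'$ is the statement that they are strongly gauge equivalent and the gauge equivalence is unique up to homotopy: this is precisely the uniqueness statement for Maurer-Cartan elements in a complete DGA whose underlying complex is acyclic in the relevant internal degrees, which I would cite from the appendix. The point is that both $\omega$ and $\omega'$ have the same prescribed degree-$1$ component $e^{\boxtimes m}$, so the gauge transformation one builds lies in $F_2$ and is constructed by the same inductive acyclicity argument; likewise the homotopy between two gauge equivalences is built one $k$-degree at a time using $H^{\bullet\neq 0}=0$.

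Finally, symmetry. Given any $\omega\in\mathcal{MC}_m$ produced above, the averaged element $\bar\omega:=\frac{1}{m!}\sum_{\sigma\in S_m}\hat\sigma(\omega)$ is again a Maurer-Cartan element, because each $\hat\sigma$ is an automorphism of the DB-algebra $\Omega_m$ (hence of the DGA $\bar\Omega_m$), and it is visibly $S_m$-invariant; its degree-$1$ component is $\frac{1}{m!}\sum_\sigma\hat\sigma(e^{\boxtimes m})=e^{\boxtimes m}$ since $e\in\Omega^1(0)$ has internal degree $0$, so all the Koszul signs in the definition of $\hat\sigma$ are trivial and each summand equals $e^{\boxtimes m}$. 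Thus $\bar\omega\in\mathcal{MC}_m$ is symmetric, proving (1).

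The main obstacle I anticipate is organizing the induction in part (2) cleanly: one must check that the gauge equivalence can be chosen to start in filtration degree $\geq 2$ (so that it is a \emph{strong} gauge equivalence compatible with the common degree-$1$ component), and that the resulting ``any two gauge equivalences are homotopic'' statement is exactly an instance of the general appendix lemma rather than something needing a fresh argument — in other words, the real work is bookkeeping the three nested layers (elements, gauge equivalences between them, homotopies between those) and confirming each layer's obstruction lands in an internal degree where the cohomology of $(\Omega_m^k(\bullet),\partial)$ vanishes.
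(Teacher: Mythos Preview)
Your overall strategy---K\"unneth for the acyclicity of $(\Omega_m^k(\bullet),\partial)$ in nonzero $l$-degree, obstruction theory for existence, and the appendix propositions for part~(2)---is exactly the paper's. There are, however, two genuine gaps.

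The averaging step in your proof of (1) does not work as written. The Maurer-Cartan equation is quadratic, so an average of Maurer-Cartan elements need not be Maurer-Cartan: $d_{\mathrm{tot}}(\bar\omega)+\bar\omega\star\bar\omega$ acquires cross-terms $\frac{1}{(m!)^2}\sum_{\sigma\neq\tau}\hat\sigma(\omega)\star\hat\tau(\omega)$, and nothing forces these to cancel. (The paper makes the same terse averaging remark, but its actual argument for the existence of a \emph{symmetric} element is the explicit rigid construction of Theorem~\ref{extending second components}, part~2.) The correct way to rescue your approach is to average \emph{inside} the induction rather than at the end: if $\omega_1,\dots,\omega_{n-1}$ have already been chosen $S_m$-invariant, then the degree-$n$ obstruction is $S_m$-invariant, and replacing any solution $\omega_n$ by $\frac{1}{m!}\sum_\sigma\hat\sigma(\omega_n)$ yields an invariant one.

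Second, your bookkeeping of the obstruction is off, and this matters. The obstruction at step $n$ lives in $(\Omega_m)^n(2-n)$, not in $(\Omega_m)^n(1)$; in fact $(\Omega_m)^n(1)=0$ since already $\Omega^k(l)=0$ for $l>0$. The relevant group is thus $H^{2-n}\bigl((\Omega_m)^n(\bullet),\partial\bigr)$, which vanishes only for $n\geq 3$. At $n=2$ you hit $H^0$, which is one-dimensional, so closedness of the obstruction does not give exactness for free. You must produce $\omega_2$ by hand (e.g.\ $\omega_2=B$ for $m=2$, or the element $x_m$ in general) and then run the obstruction argument from $r=3$; this is precisely why the paper invokes Proposition~\ref{MC1} ``for $r=3$'' rather than $r=2$.
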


Hence the resulting tensor product and symmetric power operations are uniquely defined up to strong isomorphisms. 
However, as the next theorem shows, the tensor product operation does not posses all the properties one would hope for. Namely:

\begin{corollary}\label{diagonal-case-2} 
For $m=2$, 
\begin{enumerate}
 \item There exist universal Maurer-Cartan elements that are associative. 
 \item There is no universal Maurer-Cartan element that is both associative and symmetric. 
\end{enumerate}
\end{corollary}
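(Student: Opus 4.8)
The plan is to handle the two parts separately, using the translation between universal Maurer-Cartan elements of length $2$ and DB-algebra morphisms $\Delta_\omega : \Omega \rmap \Omega \boxtimes \Omega$ with $\Delta_\omega(e) = e \boxtimes e$. For part (1), I would simply exhibit an explicit $\omega \in \mathcal{MC}_2$ and check coassociativity directly. The natural candidate is the element whose second component $\omega_2 \in (\Omega \boxtimes \Omega)^2(-1)$ reproduces the ``associative'' formula for $R_2$ given in \S2, namely the one of the form $R_2(g,h)(e) \otimes \lambda(gh)(f) + (\lambda(g)\lambda(h))(e) \otimes R_2(g,h)(f)$; in terms of the word description of $\Omega$ this corresponds to something like $\omega_2 = b \boxtimes b$ (together with whatever lower/higher components are forced). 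Rather than guessing the full series by hand, the cleaner route is: first use Theorem \ref{existence-diagonal}(1) to get \emph{some} universal Maurer-Cartan element $\omega$, then observe that the two maps $(\Delta_\omega \boxtimes \uid)\circ \Delta_\omega$ and $(\uid \boxtimes \Delta_\omega)\circ \Delta_\omega$ from $\Omega$ to $\Omega^{\boxtimes 3}$ both correspond (via the universal property of $\Omega$ and the analogue of the identification in Definition \ref{def-abstr-Omega}) to universal Maurer-Cartan elements in $\bar{\Omega}_3$, hence by Theorem \ref{existence-diagonal}(2) are strongly gauge equivalent; one then needs to promote ``gauge equivalent'' to ``equal'' by making a careful choice. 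Because the computations are small in low degrees, I expect it is in fact easier to just write down the associative $\omega$ explicitly: take $\Delta_\omega$ on generators to be determined by sending $L_n \mapsto \sum$ over the ways of distributing the $b$'s, and verify coassociativity on the finitely many generators $a, b, c$ of $\Omega$ — associativity of $\diamond$ makes this routine.

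For part (2), the obstruction is genuine and should be located in low degree, so I would look at the component $\omega_2$. Suppose $\omega \in \mathcal{MC}_2$ is symmetric: then $\hat{\tau}(\omega_2) = \omega_2$, where $\hat\tau$ is the flip on $\Omega \boxtimes \Omega$; combined with the sign $(-1)^{l(a)l(b)}$ in the definition of $\hat\sigma$, symmetry forces $\omega_2$ to be a \emph{symmetric} tensor in the relevant bidegree. The Maurer-Cartan equation in degree $2$, written out using $d_{\mathrm{tot}} = \partial + (-1)^n d$ and the product $\star$ on $\bar\Omega_2$, pins down $\partial \omega_2$ in terms of $\omega_1 = e \boxtimes e$ (via $d(\omega_1)$ and $\omega_1 \star \omega_1$); this is precisely equation \eqref{R2} at the universal level. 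The space $(\Omega\boxtimes\Omega)^2(-1) = \Omega^2(0)\otimes\Omega^1(-1) \;\oplus\; \Omega^1(-1)\otimes\Omega^2(0)\;\oplus\;\Omega^2(-1)\otimes\Omega^2(0)\;\oplus\;\dots$ — more precisely the finitely many summands $\Omega^2(i)\otimes\Omega^2(j)$ with $i+j=-1$ — is finite-dimensional, and using the cube/word descriptions one computes $\partial$ on it explicitly. The claim is then: the affine subspace of solutions $\omega_2$ of the degree-$2$ Maurer-Cartan equation is $1$-dimensional modulo $\partial$-exact terms (reflecting the $R_2$ ambiguity observed in \S2), the symmetric one is unique up to that ambiguity, and \emph{no} symmetric solution is coassociative — coassociativity at the level of $\omega_2$ would force $\omega_2 = b \boxtimes b$ (up to exact terms and the obvious symmetrizations), but $b\boxtimes b$ is not $\hat\tau$-invariant after accounting for the sign $(-1)^{(-1)(-1)} = -1$, so $\hat\tau(b\boxtimes b) = -b\boxtimes b$, forcing $\omega_2 = 0$ mod exact, which then contradicts the Maurer-Cartan equation (whose right-hand side $d(\omega_1) + \omega_1\star\omega_1$ is not $\partial$-exact in the appropriate sense, e.g. it is nonzero in $\partial$-cohomology or obstructs a compatible $R_3$).

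The main obstacle is the last sentence of part (2): showing that one truly cannot have \emph{both} properties simultaneously, not merely that the naive candidates fail. I would argue this by contradiction at the level of $\omega_2$ and, if degree $2$ does not already produce the contradiction, push to $\omega_3$: the coassociativity constraint and the symmetry constraint each cut down the affine space of Maurer-Cartan solutions, and one shows their intersection is empty. Concretely, I expect the cleanest argument is: (i) by part (1) there is an associative $\omega^{\mathrm{as}}$; (ii) by symmetry of a hypothetical $\omega^{\mathrm{sym-as}}$ and by Theorem \ref{existence-diagonal}(2) it is gauge equivalent to $\omega^{\mathrm{as}}$, but a gauge equivalence need not preserve associativity; (iii) instead examine the induced operation $\otimes_\omega$ on a well-chosen pair of \emph{strict} representations (or on the two-term complex giving the adjoint-type example), where associativity plus symmetry of the resulting honest tensor product of complexes-with-$A_\infty$-structure forces an algebraic identity on $R_2$ that one checks is violated — this transports the contradiction to a hands-on computation with $2\times 2$ blocks. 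I would present whichever of the two routes (universal computation in $\Omega\boxtimes\Omega$ versus test on explicit representations) yields the shorter verification; my expectation is that the universal degree-$2$ computation suffices once the sign $(-1)^{l(a)l(b)}$ in $\hat\tau$ is tracked correctly, since that sign is exactly what makes symmetry and the associative normal form $b\boxtimes b$ incompatible.
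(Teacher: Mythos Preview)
Your part (1) is broadly on the right track --- writing down an explicit $\Delta_\omega$ and checking coassociativity on the $\diamond$-generators $a,b,c,e$ is exactly how the paper proceeds (via the rigid family $\omega_t$ of Corollary \ref{space of diagonals}, with $\omega_0$ and $\omega_1$ associative). Your alternative route through Theorem \ref{existence-diagonal}(2) does not work: strong gauge equivalence of the two triple compositions does not give equality, and there is no mechanism to ``promote'' it.

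Part (2) has a genuine error. The element $b\boxtimes b$ lies in $(\Omega\boxtimes\Omega)^2(-2)$, not in $(\Omega\boxtimes\Omega)^2(-1)$, so it cannot be the second component $\omega_2 = \Delta_\omega(L_2) = \Delta_\omega(b)$ of any universal Maurer-Cartan element. Your sign argument ``$\hat\tau(b\boxtimes b) = -b\boxtimes b$ forces $\omega_2 = 0$ mod exact'' is therefore about the wrong object and collapses entirely. The actual associative second components are $B = b\boxtimes a + c\boxtimes b$ and $B+A$ (here $a,c$ have $l$-degree $0$), and the flip acts on these without any sign: $\hat\tau(B) = a\boxtimes b + b\boxtimes c = B+A \neq B$.

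What the paper does instead is short and worth internalizing. For \emph{any} $\omega\in\mathcal{MC}_2$ one has $\Delta_\omega(a) = d_1(e\boxtimes e) = a\boxtimes a$ and $\Delta_\omega(c) = (e\boxtimes e)\circ(e\boxtimes e) = c\boxtimes c$, since $a = d_1(e)$ and $c = e\circ e$ and $\Delta_\omega$ is a DB-morphism with $\Delta_\omega(e) = e\boxtimes e$. Applying $\Delta_\omega$ to $\partial b = c - a$ gives $\partial(\Delta_\omega(b)) = c^{\boxtimes 2} - a^{\boxtimes 2}$, i.e.\ $\Delta_\omega(b)\in\mathcal{R}_2$. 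If $\omega$ is symmetric then $\Delta_\omega(b)$ is a symmetric element of $\mathcal{R}_2$, and Theorem \ref{extending second components}(2) says this element is \emph{unique}, namely $B + \tfrac{1}{2}A$. Since $\Delta_\omega$ is now completely determined on $a,b,c$, one checks directly that $(\Delta_\omega\boxtimes\uid)(B+\tfrac{1}{2}A) \neq (\uid\boxtimes\Delta_\omega)(B+\tfrac{1}{2}A)$, so $\Delta_\omega$ is not coassociative. Note that this argument covers \emph{all} symmetric $\omega$, not only the rigid ones, because the values of $\Delta_\omega$ on $a,b,c$ are pinned down regardless. Your fallback suggestion of testing on explicit representations would at best rule out specific $\omega$'s; the uniqueness of the symmetric element in $\mathcal{R}_2$ is the missing ingredient that makes the obstruction universal.
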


In this section we also discuss a more special class of universal Maurer-Cartan elements, called rigid, which behave well with respect to the additional product $\diamond$ on $\Omega$.
One advantage of the rigid Maurer-Cartan elements is that they can be described completely. More importantly, the resulting tensor products 
preserve the unitality. To describe them, we begin by extending the product $\diamond$ from $\Omega$ to $\Omega_{m}$ by the usual formula
\[
 (a_{1}\boxtimes a_{2})\diamond(b_{1}\boxtimes b_{2})=(-1)^{l(a_{2})l(b_{1})}(a_{1}\diamond b_{1})\boxtimes(a_{2}\diamond b_{2}),
\]
giving $(\Omega_{m},\diamond)$ the structure of a unital algebra with unit $e^{\boxtimes m}$. 

\begin{definition} 
A  universal Maurer-Cartan element  $\omega\in\mathcal{MC}_{m}$ is called \textbf{rigid} if its characteristic map $\Delta_{\omega}:\Omega\rightarrow\Omega_{m}$ is a map of unital algebras with respect to the product $\diamond$.
\end{definition}

Here are the main properties of the rigid Maurer-Cartan elements.

\begin{theorem}\label{extending second components}
For each $m\geq2$, 
\begin{enumerate}
\item[1.] The set of all rigid 
Maurer-Cartan elements  $\omega\in\mathcal{MC}_{m}$ is in one-to-one correspondence with the set $\mathcal{R}_m$ of elements $x \in\Omega_{m}^{2}(-1)$ that satisfy the following equation
\begin{eqnarray}\label{PathEquation}
 \partial(x) & = & c^{\boxtimes m}-a^{\boxtimes m}.
\end{eqnarray}
The correspondence is characterized by
\begin{eqnarray}
 \omega & = & e^{\boxtimes m}+\sum_{k\geq1}x^{\diamond k}\label{eq:SerreElement}
\end{eqnarray}
\item[2.] There exists a unique  symmetric rigid Maurer-Cartan element in $\mathcal{MC}_{m}$. 
\item[3.] If $\omega\in\mathcal{MC}_{m}$ is rigid, then the induced tensor product $\otimes_{\omega}$ of unital representations up to homotopy is unital.
\end{enumerate}
\end{theorem}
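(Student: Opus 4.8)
The plan is to treat the three parts in turn, relying throughout on the description of $\Omega$ as the free unital algebra $F_e\langle a,b,c\rangle$ under $\diamond$ and on the identification \eqref{Cubes-Forests} of $(\Omega^{k+1}(\bullet),\partial)$ with the cellular chains of the cube.

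\textbf{Part 1.} I would first note that for any $\omega\in\mathcal{MC}_m$ the induced DB-morphism satisfies $\Delta_{\omega}(e)=e^{\boxtimes m}$, hence $\Delta_{\omega}(a)=\Delta_{\omega}(d_1e)=d_1(e^{\boxtimes m})=a^{\boxtimes m}$ and $\Delta_{\omega}(c)=\Delta_{\omega}(e\circ e)=e^{\boxtimes m}\circ e^{\boxtimes m}=c^{\boxtimes m}$, the last equality being a one-line computation with the product of $\Omega_m$. If $\omega$ is rigid, so that $\Delta_{\omega}$ is also a morphism of unital $\diamond$-algebras, set $x:=\Delta_{\omega}(b)\in\Omega_m^2(-1)$; then $\partial(x)=\Delta_{\omega}(\partial b)=\Delta_{\omega}(c-a)=c^{\boxtimes m}-a^{\boxtimes m}$, so $x\in\mathcal{R}_m$, and since $L=e+\sum_{n\geq2}b^{\diamond(n-1)}$ one gets $\omega=\Delta_{\omega}(L)=e^{\boxtimes m}+\sum_{k\geq1}x^{\diamond k}$, which is \eqref{eq:SerreElement}. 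Conversely, given $x\in\mathcal{R}_m$, I would let $\Phi\colon\Omega\to\Omega_m$ be the unique morphism of unital $\diamond$-algebras with $\Phi(a)=a^{\boxtimes m}$, $\Phi(b)=x$, $\Phi(c)=c^{\boxtimes m}$, and verify that $\Phi$ is a morphism of DB-algebras: it commutes with $\partial$ because both composites are $\diamond$-derivations agreeing on generators (using $\partial x=c^{\boxtimes m}-a^{\boxtimes m}$); it commutes with $\circ$ because $T\circ T'=T\diamond c\diamond T'$ in $\Omega$ while $u\circ v=u\diamond c^{\boxtimes m}\diamond v$ in $\Omega_m$ (a short sign check), together with $\Phi(c)=c^{\boxtimes m}$; and it commutes with the $d_i$, which I would prove by induction on $\diamond$-length, the base case reducing to the identities $a^{\boxtimes m}\diamond v=d_1(v)$ and $v\diamond a^{\boxtimes m}=d_{k(v)}(v)$ in $\Omega_m$ and the inductive step to the $\diamond$-analogue of \eqref{T-algebras-di}. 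As $\Phi$ is then a DB-morphism with $\Phi(L)=e^{\boxtimes m}+\sum_{k\geq1}x^{\diamond k}$, this element automatically lies in $\mathcal{MC}_m$, and the universal property (Theorem \ref{MC-exists}) gives $\Phi=\Delta_{\omega}$, so $\omega$ is rigid. The assignments $\omega\mapsto x$ and $x\mapsto\omega$ are mutually inverse, giving the stated bijection.

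\textbf{Part 2.} A routine sign check shows each $\hat{\sigma}$ is also an automorphism of the $\diamond$-algebra $\Omega_m$ fixing $e^{\boxtimes m}$, so a rigid $\omega=e^{\boxtimes m}+\sum_k x^{\diamond k}$ is symmetric if and only if $\hat{\sigma}(x)=x$ for all $\sigma\in S_m$ (taking the degree-$2$ component of $\hat{\sigma}(\omega)=\omega$). It thus suffices to find a unique $S_m$-invariant $x\in\Omega_m^2(-1)$ with $\partial(x)=c^{\boxtimes m}-a^{\boxtimes m}$. Under \eqref{Cubes-Forests}, $(\Omega_m^2(\bullet),\partial)$ becomes the cellular chain complex $C_{\bullet}(I^m)$ of the $m$-cube ---  compatibly with the $S_m$-actions, $S_m$ permuting the coordinates --- with $c^{\boxtimes m}$ and $a^{\boxtimes m}$ the vertices $(1,\dots,1)$ and $(0,\dots,0)$. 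Existence of an invariant solution follows by taking any preimage of $(1,\dots,1)-(0,\dots,0)$ (these vertices differ by a boundary since $I^m$ is connected) and averaging it over $S_m$ (we work over $\R$). For uniqueness, the difference of two such solutions is an invariant $1$-cycle, and a dimension count on the invariant subcomplex --- where $\dim C_1(I^m)^{S_m}=m$ and $\dim C_0(I^m)^{S_m}=m+1$ while $H_0(I^m)^{S_m}=\R$ --- shows $\partial\colon C_1(I^m)^{S_m}\to C_0(I^m)^{S_m}$ is injective, hence the difference vanishes. (One may also solve the resulting recursion explicitly, getting the coefficient $\tfrac{1}{m\binom{m-1}{q}}$ on the orbit of $1$-cells having $q$ of their fixed coordinates equal to $1$.)

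\textbf{Part 3.} Fix a rigid $\omega$ with associated $x\in\mathcal{R}_m$ and unital representations $E_1,\dots,E_m$; the $n$-th structure operator of $\otimes_{\omega}(E_1,\dots,E_m)$ is $R^n:=m_{E_1,\dots,E_m}\circ(k_{E_1}\boxtimes\cdots\boxtimes k_{E_m})\big(x^{\diamond(n-1)}\big)$, with $x^{\diamond 0}:=e^{\boxtimes m}$. Since $R^1(g)=R^1_{E_1}(g)\otimes\cdots\otimes R^1_{E_m}(g)$ and each $R^1_{E_i}$ is the identity on units, $R^1$ restricts to $\uid$ on units. For $n\geq2$ I must show $R^n(g_1,\dots,g_n)=0$ as soon as some $g_j$ is a unit. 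The key point is that, by a bidegree count using that $\Omega^2$ is concentrated in $l$-degrees $0$ (spanned by $a,c$) and $-1$ (spanned by $b$), the element $x$ is a linear combination of $\boxtimes$-products $u_1\boxtimes\cdots\boxtimes u_m$ having exactly one factor equal to $b$ and the rest in $\{a,c\}$; hence each term of $x^{\diamond(n-1)}$ is, up to the Koszul sign of $\diamond$, a product $W^{(1)}\boxtimes\cdots\boxtimes W^{(m)}$ of length-$(n-1)$ words in which, at each position, exactly one of the $W^{(r)}$ carries the letter $b$. Plugging $g_j=u$ and using that $E_r$ is unital, $k_{E_r}(W^{(r)})$ evaluated on such a degenerate string vanishes precisely when the $j$-th leaf of $W^{(r)}$ is the sole leaf of its branch in a tree with at least two branches, i.e. when $W^{(r)}$ carries a $b$ adjacent to position $j$ and no $a$ there; otherwise it equals the corresponding reduced word evaluated on the shortened string. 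If $j=1$ or $j=n$, some slot $r$ does carry a $b$ at that extreme position, so that factor --- and hence the whole term --- is killed. If $2\leq j\leq n-1$, a term survives only when, at positions $j-1,j$, the two chosen $\boxtimes$-factors $\xi_{j-1},\xi_j$ of $x$ carry their $b$'s in distinct slots $r_0\neq r_1$, with $\xi_{j-1}$ an $a$ in slot $r_1$ and $\xi_j$ an $a$ in slot $r_0$; such terms come in pairs $\{T,T'\}$, with $T'$ obtained by transposing $\xi_{j-1}$ and $\xi_j$, which leaves the coefficient unchanged, flips the Koszul sign of $\diamond$ (the two factors have total degree $-1$ and their only odd components sit in the distinct slots $r_0,r_1$), and after the substitution $g_j=u$ produces the same operation on the shortened string; so $T$ and $T'$ cancel, and $R^n$ vanishes on the degenerate string.

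The formal bookkeeping in Part 1 and the cube/linear-algebra argument in Part 2 are routine; the crux is Part 3, and within it the interior-position case --- identifying exactly which terms of $x^{\diamond(n-1)}$ survive setting $g_j=u$ and matching them in cancelling pairs. The delicate point is that the Koszul signs governing the $\diamond$-product of $\Omega_m$ must come out so that transposing the two adjacent factors changes the sign, so that the paired terms cancel rather than reinforce.
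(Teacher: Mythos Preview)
Your proof is correct in all three parts. Parts 1 and 2 are essentially the paper's argument: in Part 1 the paper verifies the Maurer--Cartan equation for $e^{\boxtimes m}+\sum x^{\diamond k}$ directly (using $x\circ y=x\diamond c^{\boxtimes m}\diamond y$ and $d_{k(x)}(x\diamond y)=x\diamond a^{\boxtimes m}\diamond y$), while you instead check that the $\diamond$-algebra map $\Phi$ is a DB-morphism and invoke universality---different packaging of the same computation. In Part 2 the paper exhibits the explicit element $x_m=\tfrac1{m!}\sum_\sigma\sum_j\hat\sigma(c^{\boxtimes j}\boxtimes b\boxtimes a^{\boxtimes(m-j-1)})$ and argues uniqueness via an explicit basis of orbit representatives, whereas you use the cube identification and a dimension count on the $S_m$-invariant complex; both are routine.

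Part 3 takes a genuinely different route. The paper introduces degeneracy operators $s_i$ on $\Omega$ and on $\Omega_m$, reformulates unitality as ``$k_E$ commutes with the $s_i$'', and then proves the universal statement $s_i(x^{\diamond k})=0$ for every $x\in\Omega_m^2(-1)$. The proof reduces inductively to $s_2(x\diamond x)=0$, uses $x\diamond x=\tfrac12[x,x]_\diamond$, and finishes by checking that commutators of basis vectors are $s_2$-normalized via the identity $s_2(u\diamond v)=s_2(v\diamond u)$ for $u,v\in\{a,b,c\}$. You work directly with the representations, expanding $x^{\diamond(n-1)}$ and pairing the surviving terms by swapping the adjacent factors $\xi_{j-1},\xi_j$. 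The core combinatorics is in fact the same---your ``Koszul sign flips under the swap'' is the sign in the graded commutator, and your ``produces the same operation on the shortened string'' is the paper's $s_2(u\diamond v)=s_2(v\diamond u)$---but the paper's $s_i$ framework yields a clean universal vanishing statement in $\Omega_m$ and reduces the general $n$ to $n=2$, while your argument handles general $n$ at once at the cost of more explicit bookkeeping. Both are valid; the paper's version gives a reusable structural lemma, yours is more self-contained.
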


In the case $m= 2$ we deduce the following.

\begin{corollary} \label{space of diagonals} 
The set of rigid Maurer-Cartan elements in $\mathcal{MC}_{2}$ coincides with the one-parameter family
$\{\omega_t\}_{t\in \mathbb{R}}$ given by
\begin{eqnarray*}
 \omega_{t} & = & e\boxtimes e+\sum_{k\geq1}(B+At)^{\diamond k},
\end{eqnarray*}
where
\[
 B=b\boxtimes a+c\boxtimes b,\ \  A= b\boxtimes c+ a\boxtimes b - b\boxtimes a- c\boxtimes b.
\]
Among these, $\omega_{\frac{1}{2}}$ is the only symmetric one, and $\omega_{0}$ and $\omega_{1}$ are the only associative ones.
\end{corollary}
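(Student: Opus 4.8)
The plan is to derive everything from Theorem \ref{extending second components} plus a little explicit bookkeeping in low degrees. By part (1) of that theorem with $m=2$, the rigid Maurer-Cartan elements of $\mathcal{MC}_2$ are identified with the solutions $x\in\Omega_2^2(-1)$ of $\partial(x)=c^{\boxtimes 2}-a^{\boxtimes 2}$ via $x\mapsto e^{\boxtimes 2}+\sum_{k\geq 1}x^{\diamond k}$, so the first task is to compute that solution set. From the word description of $\Omega$ in \S4 one has $\Omega^2(0)=\langle a,c\rangle$, $\Omega^2(-1)=\langle b\rangle$ and $\Omega^2(l)=0$ otherwise, so $\Omega_2^2(-1)=\Omega^2(0)\otimes\Omega^2(-1)\oplus\Omega^2(-1)\otimes\Omega^2(0)$ is four-dimensional with basis $a\boxtimes b,\ c\boxtimes b,\ b\boxtimes a,\ b\boxtimes c$. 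Computing $\partial$ on these four vectors from $\partial(a)=\partial(c)=0$, $\partial(b)=c-a$ and the $\boxtimes$-Leibniz rule, I expect to find that $B$ is a particular solution, that $A\neq 0$ lies in $\ker\partial$, and that $\ker(\partial\colon\Omega_2^2(-1)\to\Omega_2^2(0))$ is one-dimensional; then the solution set is $B+\langle A\rangle=\{B+tA:t\in\mathbb{R}\}$, which under the bijection is exactly $\{\omega_t\}_{t\in\mathbb{R}}$. This gives the first assertion.

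For symmetry I would use that the flip $\sigma\in S_2$ acts on $\Omega_2$ by the DB-algebra automorphism $\hat\sigma$ of subsection \ref{The tensor product of DB-algebras}; since $b$ is the only odd generator, a one-line sign computation shows $\hat\sigma$ also respects $\diamond$. Because $\hat\sigma$ commutes with $\partial$ it permutes the solution set above, and being a $\diamond$-morphism fixing $e^{\boxtimes 2}$ it sends $\omega_t$ to the rigid element attached to $\hat\sigma(B+tA)$. On the basis of $\Omega_2^2(-1)$, $\hat\sigma$ interchanges $a\boxtimes b\leftrightarrow b\boxtimes a$ and $c\boxtimes b\leftrightarrow b\boxtimes c$ with sign $+1$, so $\hat\sigma(B+tA)=B+tA$ will force $t=\frac{1}{2}$; hence $\omega_{\frac{1}{2}}$ is the unique symmetric one.

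For associativity I would argue as follows. The map $\Delta_{\omega_t}$ is coassociative iff $(\Delta_{\omega_t}\boxtimes\uid)\circ\Delta_{\omega_t}=(\uid\boxtimes\Delta_{\omega_t})\circ\Delta_{\omega_t}$ as maps $\Omega\to\Omega^{\boxtimes 3}$. Both composites are unital morphisms for $\diamond$ (rigidity of $\Delta_{\omega_t}$, together with the fact that $\boxtimes$ of $\diamond$-morphisms is again a $\diamond$-morphism), and $(\Omega,\diamond)$ is freely generated by $a,b,c$, so it is enough to compare the two sides on these three generators. Using $a=d_1(e)$ and $c=e\circ e$ I would compute $\Delta_{\omega_t}(a)=d_1(e^{\boxtimes 2})=a^{\boxtimes 2}$ and $\Delta_{\omega_t}(c)=e^{\boxtimes 2}\circ e^{\boxtimes 2}=c^{\boxtimes 2}$, whereas $\Delta_{\omega_t}(b)=\Delta_{\omega_t}(L_2)$ is the degree-$2$ component of $\omega_t$, namely $B+tA$. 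Then both composites send $a\mapsto a^{\boxtimes 3}$ and $c\mapsto c^{\boxtimes 3}$ automatically, so coassociativity reduces to the single identity $(\Delta_{\omega_t}\boxtimes\uid)(B+tA)=(\uid\boxtimes\Delta_{\omega_t})(B+tA)$ in $\Omega^{\boxtimes 3}$. Expanding both sides in the basis of $\Omega^{\boxtimes 3}$ given by the twelve words carrying a single $b$ and matching coefficients, I expect all the resulting equations to collapse to $t(1-t)=0$; and I would then check directly that for $t=0$ (where $\Delta_{\omega_0}(b)=B=b\boxtimes a+c\boxtimes b$, the associative $R_2$ displayed in \S2) and for $t=1$ (where $\Delta_{\omega_1}(b)=B+A=a\boxtimes b+b\boxtimes c$) the identity genuinely holds. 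This yields that $\omega_0$ and $\omega_1$ are exactly the associative ones.

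The hard part is the associativity step: pinning down $\Delta_{\omega_t}$ on the generators $a,b,c$ and then doing the $\Omega^{\boxtimes 3}$ coefficient comparison without sign errors; the rest — the linear algebra in $\Omega_2^2(-1)$, and the sign checks that $\hat\sigma$ and the tensoring of $\diamond$-morphisms respect $\diamond$ — is routine. A variant that avoids appealing to the freeness of $(\Omega,\diamond)$ is to use the universal property of $\Omega$ (Definition \ref{def-abstr-Omega}) together with Theorem \ref{extending second components}(1) for $m=3$ to reduce coassociativity to the equality of the degree-$2$ components of the two rigid length-$3$ Maurer-Cartan elements $(\Delta_{\omega_t}\boxtimes\uid)(\omega_t)$ and $(\uid\boxtimes\Delta_{\omega_t})(\omega_t)$ — the same identity as before — which also connects the corollary with Corollary \ref{diagonal-case-2}.
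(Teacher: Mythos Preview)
Your proposal is correct and follows essentially the same route as the paper: reduce to the affine space $\mathcal{R}_2\subset\Omega_2^2(-1)$ via Theorem~\ref{extending second components}, identify its underlying one-dimensional vector space of $\partial$-cocycles, and then test symmetry and coassociativity on the $\diamond$-generators, the latter reducing to the single equation $(\Delta_{\omega_t}\boxtimes\uid)(B+tA)=(\uid\boxtimes\Delta_{\omega_t})(B+tA)$. The only cosmetic difference is that the paper invokes the cubical identification $(\Omega_2^k(\bullet),\partial)\cong C_\bullet(I^{k-1})^{\otimes 2}$ to see that $\ker\partial$ is one-dimensional, whereas you compute it directly on the four-element basis; both are equally short here.
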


\begin{example}
For $\omega_{0}$, the second component  is $B$ and its third component is $B\diamond B$. In terms of trees it is given by
\begin{gather*}
 \SquareVertexD\;\otimes\;\FSquare\;-\;\SquareEdgeA\;\otimes\;\SquareEdgeD\;+\qquad\qquad\qquad\\
 \\\qquad\qquad\qquad+\quad\SquareEdgeB\;\otimes\;\SquareEdgeC\;+\;\FSquare\;\otimes\;\SquareVertexA\;.
\end{gather*}
It yields, for the third component of the tensor product of the representations up to homotopy $(E, R_{k})$ and $(F, R_{k})$, the expression
\begin{eqnarray*}
 R_{3}(g_{1},g_{2},g_{3}) & = & \Big(R_{1}(g_{1})\circ R_{1}(g_{2})\circ R_{1}(g_{3})\Big)\otimes R_{3} (g_{1},g_{2},g_{3})-\\
 &  & \quad-\Big(R_{1}(g_{1})\circ R_{2}(g_{2},g_{3})\Big)\otimes\Big(R_{2}(g_{1},g_{2}g_{3})\Big)\\
 &  & \quad\quad+\Big(R_{2}(g_{1},g_{2})\circ R_{1}(g_{3})\Big)\otimes R_{2}(g_{1}g_{2},g_{3})\\
 &  & \quad\quad\quad\quad+R_{3}(g_{1},g_{2},g_{3})\otimes R_{1}(g_{1}g_{2}g_{3}).
\end{eqnarray*}
\end{example}

\begin{remark} 
Given $\omega\in \mathcal{MC}_2$,  we can interpret the restriction of the characteristic map $\Delta_\omega$ to $\Omega^{n+1}$ as a diagonal on the cubical complex of the $n$-cube (see paragraph \ref{sub: Cube Description }). If we take the rigid and associative universal Maurer-Cartan element $\omega_0$, the resulting (associative) coincides with the Serre diagonal present in any cubical complex (\cite{Serre}). 
\end{remark}

The rest of this section is devoted to the proofs of the theorems stated above.

\subsection{Proof of Theorem \ref{existence-diagonal}}\label{proof1}

That universal Maurer-Cartan elements exist can be derived abstractly using Proposition \ref{MC1} from the appendix  (for $r= 3$). However, the existence will also follow
from the results on rigid Maurer-Cartan elements. Note also that starting with any Maurer-Cartan element $\omega\in\mathcal{MC}_m$, one can produce a
symmetric one by averaging:
 \[
 \textrm{Av}(\omega)=\frac{1}{m!}\sum_{\sigma\in S_{m}}\hat{\sigma}(\omega).
\]
For the second part of the theorem we use Proposition \ref{MC2} of the appendix applied to $r= 2$
and to the complete DG-algebra associated to $\Omega_m$. 
Denoting by $\ldots \subset F_2\subset F_1\subset F_0$  the associated filtration of $\bar{\Omega}_m$, we have $F_{k}/F_{k+1}= \Omega^{k}(\bullet)$.
Note that any universal Maurer-Cartan element is congruent to $e^{\boxtimes m}$ modulo $F_2$ and the induced differential
on $F_2/F_3$ becomes $\partial$. On the other hand, the cochain complex $(\Omega_{m}^{k}(\bullet), \partial)$ is the the tensor product
of the $m$ copies of the complex $(\Omega^{k}(\bullet), \partial)$.  Hence, from the last part of Theorem \ref{MC-exists},
it has trivial cohomology in non-zero degrees. In conclusion, for 
the cohomology of $F_k/F_{k+1}$ indexed by the total degree (as needed in the theorem of the appendix), we obtain
\[ H^{i}(F_k/F_{k+1})= 0\ \ \forall \ i\neq k\]
and we can apply Proposition \ref{MC2}.

\subsection{Proof of Theorem \ref{extending second components}}

For the first part of the theorem, let $\Delta_{\omega}:\Omega\rightarrow\Omega_{m}$ be the map associated to a rigid element $\omega\in\mathcal{MC}_{m}$. We verify immediately that $x=\Delta_{\omega}(b)$ satisfies Equation (\ref{PathEquation}) by applying $\partial$ on both sides. For the converse, we need to show that given $x$, there is a unique way to extend it to a rigid Maurer-Cartan element $\omega$. Recall that the $k^{th}$ component $\omega_{k}$ of $\omega$ is $\Delta_{\omega}(L_{k})$. Since $L_{k}=b^{\diamond(k-1)}$ and $\Delta_{\omega}$ is required to respect $\diamond$, we conclude that if $\omega$ exists, then $\omega_{k}=x^{\diamond(k-1)}$ for $k\geq2$ and $\omega_{1}=e^{\boxtimes m}$. Now we only need to prove that such $\omega$ is a Maurer-Cartan element.
Note that for $x\in\Omega_{m}^{k(x)}$ and $y\in\Omega_{m}^{k(y)}$ with $k(x)>0$ and $k(y)>0$, the following identities hold:
\begin{eqnarray}
 x\circ y & = & x\diamond c^{\boxtimes m}\diamond y,\nonumber\\
 d_{k(x)}(x\diamond y) & = & x\diamond a^{\boxtimes m}\diamond y.\nonumber
\end{eqnarray}
Using also that $\partial$ is a derivation with respect to $\diamond$, we have
\begin{eqnarray*}
 \partial(x^{\diamond k-1}) & = & \sum_{i=1}^{k-1}(-1)^{i+1}
      \big(x^{\diamond(i-1)}\diamond(c^{\boxtimes m}-a^{\boxtimes m})\diamond x^{\diamond(k-i-1)}\big),\\
      & = & \sum_{i=1}^{k-1}(-1)^{i+1}\big(x^{\diamond(i-1)}\circ x^{\diamond(k-i-1)}-d_{i}x^{\diamond(k-2)}\big),
\end{eqnarray*} 
hence $\omega$ is a Maurer-Cartan element.

For the existence of the second part of the theorem, note that the symmetric element
\[
 x_{m}=\frac{1}{m!}\sum_{\sigma\in S_{m}}\sum_{j=0}^{m-1}\hat{\sigma}\,\,
\left(\underbrace{c\boxtimes\dots\boxtimes c}_{j\text{ times }} \boxtimes b \boxtimes \underbrace{a\boxtimes\dots\boxtimes a}_{m- j- 1\text{ times }} \right)
\]
belongs to $\mathcal{R}_m$. This follows by direct computation (recall that $\partial(b)= c-a$ and $\partial$ kills $a$ and $c$). For the uniqueness, note first that the elements of the form
\[ X_j=c^{\boxtimes j}\boxtimes b\boxtimes a^{\boxtimes m-j-1}, \ \ 0\leq j\leq m-1 \] 
span a vector space consisting of representatives of the orbits the action of the permutation group on $\Omega_{m}^{2}(-1)$. 
Since $\partial$ commutes with the action of the permutation group, averaging gives a one-to-one correspondence between symmetric solutions of the equation (\ref{PathEquation}) a
and solutions of type $X =\sum_i a_i X_i$ of the same equation, with $a_i$-some coefficients. It is now easy to see that the resulting
equation on the $a_i$'s has the unique solution $a_i= 1$.

The last part of the theorem requires a more conceptual understanding of the unitality of representations up to homotopy
and is postponed to the final subsection of this section.

\subsection{The case $m= 2$: the proof of Corollary \ref{diagonal-case-2}
and of Corollary \ref{space of diagonals} }

We start with the proof of Corollary \ref{space of diagonals}.
In view of Theorem \ref{extending second components}, in order to prove that $\omega_t$ is a rigid Maurer-Cartan element, it is enough to show that $ B+At\in \mathcal{R}_2$.
A simple computation shows that:
\begin{eqnarray*}
\partial(B)& =&  c^{\boxtimes m}-a^{\boxtimes m}\\
\partial{A}&=&0.
\end{eqnarray*}
For the uniqueness note that, in general, $\mathcal{R}_m$ is an affine space with underlying vector space consisting of $\partial$-cocycles in $\Omega_{m}^{2}(-1)$. 
Due to the identification with the cellular complex of the cubes, this vector space coincides with $Z^{1}(C(I^m))$. When $m= 2$, this is easily seen to be 1-dimensional,
hence the family $\{\omega_t\}$ exhausts all the rigid elements. 

For the last part of the corollary, 
let $\Delta_{t}:\Omega\rightarrow\Omega\boxtimes\Omega$ be the characteristic map of the rigid element $\omega_{t}\in\mathcal{MC}_{2}$. For each $t\in\R$, $\omega_{t}$ produces an associative tensor product of representations up to homotopy if and only if $\Delta_{t}$ is coassociative. Since $\Delta_{t}$ respects $\diamond$, we only need to check coassociativity on the generators $a,b$ $c$ and $e$. This always holds for $a,c$ and $e$. A direct computation shows that
\begin{eqnarray*} 
 (\Delta_{t}\boxtimes\uid)(B+At) & = & (\uid\boxtimes\Delta_{t})\circ(B+At)
\end{eqnarray*}
if and only if $t\in\{0,1\}$. A similar argument shows that $\Delta_{t}$ is symmetric if and only if $t=\frac{1}{2}$.

Turning to Corollary \ref{diagonal-case-2}, we are left with proving the last part. Assume that $\omega\in \mathcal{MC}_{2}$ is
symmetric and we show that it cannot be associative. We claim that $\Delta_{\omega}(b)$ must belong to $\mathcal{R}_2$. Indeed, since $\Delta_{\Omega}$ is a DB-morphism 
and $\partial(b)= c-a$, $c= e^2$, $a= d_1(e)$, we have
\[ \partial(\Delta_{\omega}(b))= \Delta_{\omega}(c)- \Delta_{\omega}(a)= \Delta_{\omega}(e)^2- d_1(\Delta_{\omega}(e))= c^{\boxtimes 2}-a^{\boxtimes 2}.\]
Hence, from the uniqueness of symmetric elements of $\mathcal{R}_m$, $\Delta_{\omega}(b)$ must coincide with $B+\frac12 A$. But, a simple computation similar to the one above shows that 
\[ (\Delta_{\omega}\boxtimes\uid)(B+\frac12 A)\neq (\uid\boxtimes\Delta_{\omega})(B+\frac12 A),\]
hence $\omega$ cannot be associative.

\subsection{End of proof of Theorem \ref{extending second components}: unitality}\label{sec:unitality}

In this paragraph, we look at tensor products of unital representations up to homotopy, proving in particular the last part of Theorem \ref{extending second components}.
We start by expressing the unitality in terms of the characteristic map $k_E:\Omega\rightarrow\mathcal{A}_E$ of the representation up to homotopy. This brings us to the question of the unitality of the DB-algebras $\Omega$ and $\mathcal{A}_E$ themselves. So far, $\Omega$ has been nonunital ($\Omega^k$ is nontrivial only for $k\geq 1$). On the other hand, $\mathcal{A}_E$ does have a unit (the vector bundle identity map $\uid_E\in \mathcal{A}_E^0$)- but we did not use it so far. 
Unital representations up to homotopy force us to consider unital DB-algebras: throughout this section, we regard $\mathcal{A}_E$ as a unital DB-algebra with unit $\uid_E$, and we formally adjoint a unit to $\Omega$ in degree $0$, which we denote by $1$ and interpret as the empty tree. The characteristic map becomes unital by imposing
$$k_E(1) = \uid_E.$$

\begin{remark}
The set of Maurer-Cartan elements  $\mathcal{MC}_1(\mathcal{\bar A})$  are not concerned by the unitality of the  DB-algebra $\mathcal{A}$,  since we require these elements to have no component in degree zero.  For instance, the universal element $L=L_1+L_2+L_3+\cdots$  in the nonunital $\Omega$ remains the same in its unital version. 
\end{remark}

\begin{definition}
Let  $\mathcal{A}_{E}$ be the DB-algebra associated to a representation up to homotopy $(E,R_{k})$ of a Lie groupoid $G$ over $M$. For each $k\geq 1$, we define the following operators
\[
 s_{i}:A_{E}^{k}\longrightarrow A_{E}^{k-1},\quad i=1,\dots,k
\]
by the formula
\[
 s_{i}(c)(g_{1},\dots,g_{k-1})=c(g_{1},\dots,g_{i-1},x_i,g_{i},\dots,g_{k-1}),
\]
where $x_i = s(g_{i-1})=t(g_i)$.  For $k=1$, we define $s_1(c)$ to be the restriction of $c$ to the unit space $M$ of the groupoid. We extend these operators  to the powers $\mathcal{A}_{E}^{\boxtimes n}$ diagonally. 
\end{definition}

Clearly, $(E,R_k)$ is unital if and only if $s_1(R_1) = \uid_E$ and $ s_{i}(R_{k})  =  0 $ for $k>1$ and $i=,1\dots,k$. Now let us define the corresponding operators at the universal level:

\begin{definition}
For $k>1$, we define the operators 
\[
 s_{i}:\Omega^{k}\rightarrow\Omega^{k-1},\, i=1,\dots,k,
\]
as follows. Let $F\in \Omega^k$ be a short forest.  We denote by $F_{i}$ the forest with $k-1$ leaves obtained from $F$ by deleting its $i$-th leaf. Then,
\begin{enumerate}
\item if $F_i$ is a short forest, we set $s_i(F) =F_i$.
\item if $F_i$ is not a short forest (its $i$-th leaf is now of height $1$), we set
\begin{equation}\nonumber
 s_{i}(F)= \begin{cases}
     F_1\circ F_2,\quad \textrm{if}\quad F=F_1\circ L_1\circ F_2 \textrm{ with }  k(F_1)=i-1,\\
     0, \qquad \quad \textrm{   otherwise}.
    \end{cases}
\end{equation}
\end{enumerate}
For $k=1$, we set $s_1(L_1)= 1$. We extend the $s_{i}$'s diagonally to $\Omega_{m}$.
\end{definition}

\begin{example}
Applying $s_2$ to the short forests
\[
 F=\;\ForestAC\;, \quad G=\;\ForestCC\;\quad H=\;\ForestCB\;,
\]
respectively yields the short forests $c$ for $F$ and $G$, and $0$ for $H$. 
\end{example}

Deleting the $i$-th leaf of a short forest $F\in \Omega^k$ corresponds, at the universal level, to plugging a groupoid unit in the $i$-th argument slot of the operator $k_E(F)$ in $\mathcal{A}_E^k$. In particular, we see that the $s_i$'s commute with the characteristic map $k_E:\Omega\rightarrow \mathcal{A}_E$ of a representation up to homotopy if and only if the representation is unital. Namely, our definitions yield that $s_1\circ k_E(L_1)$ is the restriction of $R_1$ to the unit space and that $k_E\circ s_1(L_1)$ is the identity $\uid_E$. For $n>1$, we have that $s_i\circ k_E(L_n)$ is the operator $R_n$ restricted to the unit space at its $i$-th slot and, on the other hand, that $k_E\circ s_i(L_n) = 0$. 

Now we are ready to characterize the universal Maurer-Cartan elements whose associated tensor products preserve the unitality of the representations.

\begin{definition} 
We will say that a universal Maurer-Cartan element $\omega\in\mathcal{MC}_{m}$ is unital if its characteristic map $\Delta_\omega$ commutes with the $s_{i}$'s.
\end{definition}

\begin{lemma} 
Let $\omega\in\mathcal{MC}_{m}$ be a unital Maurer-Cartan element. Then the tensor product of $m$ unital representations up to homotopy with respect to $\omega$ is also unital. 
\end{lemma}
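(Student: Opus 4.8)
The plan is to reduce the statement to the characterization of unitality just obtained: a representation up to homotopy is unital precisely when its characteristic map commutes with all the operators $s_i$. So I would argue that the characteristic map of the tensor product,
\[
 k_{\otimes_{\omega}(E_1,\ldots,E_m)} \;=\; m_{E_1,\ldots,E_m}\circ(k_{E_1}\boxtimes\cdots\boxtimes k_{E_m})\circ\Delta_{\omega},
\]
commutes with the $s_i$'s, and for this I would check the corresponding property for each of the three maps in the composition separately. The leftmost factor $\Delta_{\omega}$ commutes with the $s_i$'s by hypothesis, since that is exactly what it means for $\omega$ to be unital. For the middle factor, since each $E_j$ is unital the map $k_{E_j}$ commutes with the $s_i$'s; as the $s_i$'s on $\Omega_m$ and on $\mathcal{A}_{E_1}\boxtimes\cdots\boxtimes\mathcal{A}_{E_m}$ are both defined by applying $s_i$ to each $\boxtimes$-factor, it follows at once that $k_{E_1}\boxtimes\cdots\boxtimes k_{E_m}$ commutes with the $s_i$'s as well.

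It remains to treat the rightmost factor $m_{E_1,\ldots,E_m}$, and here I would compute directly from the defining formulas. For $k\geq 2$ and $1\leq i\leq k$, both the operator $s_i$ on $\mathcal{A}_{E_1}\boxtimes\cdots\boxtimes\mathcal{A}_{E_m}$ and the operator $s_i$ on $\mathcal{A}_{E_1\otimes\cdots\otimes E_m}$ amount to inserting a groupoid unit $1_{x_i}$ into the $i$-th argument slot of the relevant operator; since $m_{E_1,\ldots,E_m}$ is given pointwise in the arrows by $(c_1\boxtimes\cdots\boxtimes c_m)(g_1,\ldots,g_k)\mapsto c_1(g_1,\ldots,g_k)\otimes\cdots\otimes c_m(g_1,\ldots,g_k)$, evaluating on a string with a unit inserted commutes past $m_{E_1,\ldots,E_m}$, and the diagonal description of $s_i$ on the $\boxtimes$-power then yields exactly $m_{E_1,\ldots,E_m}\circ s_i = s_i\circ m_{E_1,\ldots,E_m}$. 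The case $k=1$ is the same, reading ``restriction to the unit manifold $M$'' for $s_1$. Composing the three factors, $k_{\otimes_{\omega}(E_1,\ldots,E_m)}$ commutes with all $s_i$ (and, being a composite of unital DB-morphisms, still sends $1$ to $\uid$), so by the characterization of unitality the representation up to homotopy $\otimes_{\omega}(E_1,\ldots,E_m)$ is unital.

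The only non-formal ingredient is the compatibility of $m_{E_1,\ldots,E_m}$ with the $s_i$'s, and that is also the step I expect to demand the most care — chiefly in handling the $k=1$ boundary case and in keeping straight that the $s_i$ on each $\boxtimes$-power is the diagonal extension of the $s_i$ on a single factor; the other two factors are immediate from the hypothesis on $\omega$ and from the previously established unitality of the $E_j$'s.
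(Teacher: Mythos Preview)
Your proposal is correct and follows exactly the same approach as the paper's proof: decompose the characteristic map of the tensor product into its three factors and argue that each commutes with the $s_i$'s, then conclude by the characterization of unitality. The paper's proof is in fact terser than yours, simply asserting that each of the three maps commutes with the $s_i$'s without spelling out the verification for $m_{E_1,\ldots,E_m}$; your additional detail there is fine and does not deviate from the intended argument.
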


\begin{proof} 
The characteristic map 
\[
 k_{\otimes_{\omega}(E_{1},\ldots,E_{m})}=m_{E_{1},\ldots,E_{m}}\circ(k_{E_{1}}\boxtimes\ldots\boxtimes k_{E_{m}})\circ\Delta_{\omega},
\]
of the tensor product $\otimes_\omega(E_1,\dots,E_m)$ of the unital representations $E_1,\dots,E_m$  is a composition of three maps, each of which commutes with the $s_i$'s. Hence the composition itself also commutes with the  $s_i$'s and the resulting representation up to homotopy is unital. 
\end{proof}

\begin{proposition} 
For any $x\in\Omega_{m}^{2}(-1)$,
\[
 s_{i}(x^{\diamond k})=0,\quad  k>0,\quad 1\leq i\leq{k+1}.
\]
In particular, if $x\in\mathcal{R}_m$,  then the associated rigid Maurer-Cartan element $\omega$ (of Theorem \ref{extending second components}) is unital.  
\end{proposition}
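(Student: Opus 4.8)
The plan is to work at the universal level, via the description of $\Omega$ through faces of cubes in \ref{sub: Cube Description }: the identification $\Omega^{k+1}(\bullet)\cong C_\bullet(I^k)$ of cellular chain complexes, under which $\diamond$ is the Cartesian product of cells. The first step is to read off the operators $s_i$ in this language. Comparing the combinatorial rule ``merge the two adjacent letters $e_{i-1},e_i$'' against the cellular behaviour of $\max\colon I^2\to I$, one checks, straight from the definition of $s_i$, that for $2\le i\le k$ the operator $s_i\colon\Omega^{k+1}\to\Omega^{k}$ is induced by the connection $I^k\to I^{k-1}$ replacing $(t_{i-1},t_i)$ by $\max(t_{i-1},t_i)$, while $s_1$ (resp.\ $s_{k+1}$) is the cellular projection $I^k\to I^{k-1}$ forgetting the first (resp.\ last) coordinate; on $\Omega_m=\Omega^{\boxtimes m}$ all of these act diagonally. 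Under the identification, $x\in\Omega_m^{2}(-1)$ is a cellular $1$-chain of $I^m$ (the condition $l=-1$ forces exactly one of the $m$ cube coordinates to be a $1$-cell in each monomial), and, after the evident re-grouping of the $mk$ coordinates, $x^{\diamond k}$ is the $k$-fold Cartesian product $x\times\cdots\times x$, the $r$-th factor living on the coordinates of the $r$-th copy of $\Omega$.

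Granting this, the two boundary indices are immediate: $s_1$ (resp.\ $s_{k+1}$) projects away the first (resp.\ last) of the $k$ Cartesian factors of $x^{\diamond k}$, and that factor equals $x$, a positive-dimensional chain, so the image is degenerate; hence $s_1(x^{\diamond k})=s_{k+1}(x^{\diamond k})=0$. (Without cubes: $s_1$ of any monomial of $x$ hits a tensor factor $b$, and $s_1(b)=0$.) For an interior index $2\le i\le k$, the connection $s_i$ only involves the $(i-1)$-st and $i$-th Cartesian factors, so
\[
 s_i(x^{\diamond k})=\pm\, x^{\times(i-2)}\times\Gamma_*(x\times x)\times x^{\times(k-i)},\qquad
 \Gamma\colon I^m\times I^m\to I^m,\ \ \Gamma(u,v)=\big(\max(u_j,v_j)\big)_{j=1}^{m}.
\]
Since $\Gamma$ is symmetric we have $\Gamma\circ T=\Gamma$ for the swap $T(u,v)=(v,u)$, hence $\Gamma_*=\Gamma_*\circ T_*$; and since $x$ is a $1$-chain (equivalently, $|x|=k(x)+l(x)=1$ is odd), $T_*(x\times x)=(-1)^{1}\, x\times x=-\,x\times x$. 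Therefore $\Gamma_*(x\times x)=-\Gamma_*(x\times x)$, which forces $\Gamma_*(x\times x)=0$, and so $s_i(x^{\diamond k})=0$.

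I expect the only delicate point to be the first step — matching each $s_i$ with the correct cubical map and keeping the coordinate re-groupings between $(\Omega^{k+1})^{\boxtimes m}$ and $C_\bullet((I^m)^{\times k})$ straight; the signs produced by the $\boxtimes$-product and by the diagonal extension of the $s_i$ are harmless, as the vanishing uses only the symmetry of $\Gamma$ and the parity of $x$. One could also avoid the cube language entirely and argue combinatorially: the non-vanishing terms of $s_i(x^{\diamond k})$ pair up, a term with a monomial of $x$ in the $(i-1)$-st factor and another in the $i$-th factor cancelling the one with the two interchanged, because swapping two total-degree-$1$ tensors in a $\diamond$-product reverses the sign while the merge operation does not see the order — but the cube picture packages exactly this bookkeeping.

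For the last assertion: $\omega$ is unital exactly when $\Delta_\omega$ commutes with all $s_i$, and then $\otimes_\omega$ preserves unitality by the Lemma above. If $x\in\mathcal{R}_{m}$ then $\omega$ is rigid, so $\Delta_\omega$ is a morphism for $\diamond$ (and for $\circ$, the $d_j$ and $\partial$), with $\Delta_\omega(b)=x$, $\Delta_\omega(a)=d_1(e^{\boxtimes m})=a^{\boxtimes m}$, $\Delta_\omega(c)=e^{\boxtimes m}\circ e^{\boxtimes m}=c^{\boxtimes m}$ and $\Delta_\omega(e)=e^{\boxtimes m}$. Using the compatibility of the $s_i$ with $\diamond$, $\circ$ and the $d_j$, the identity $\Delta_\omega\circ s_i=s_i\circ\Delta_\omega$ reduces to the generators $a,b,c,e$ of $(\Omega,\diamond)$, and there it is immediate: on $b$ both sides vanish, since $s_i(b)=0$ and $s_i(\Delta_\omega(b))=s_i(x)=0$ by the case $k=1$ above; on $a,c$ one has $s_i(a)=s_i(c)=e$ and $s_i(a^{\boxtimes m})=s_i(c^{\boxtimes m})=e^{\boxtimes m}=\Delta_\omega(e)$; and $s_1(e)=1$ with $s_1(e^{\boxtimes m})=1^{\boxtimes m}=\Delta_\omega(1)$. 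Hence $\omega$ is unital.
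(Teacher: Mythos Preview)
Your argument for $s_i(x^{\diamond k})=0$ is correct and packages the same cancellation as the paper, but in geometric rather than combinatorial language. The paper works with words: it inducts to reduce everything to $s_2(x\diamond x)$, rewrites $x\diamond x=\tfrac12[x,x]_\diamond$, and then checks by hand that $s_2(v\diamond w)=s_2(w\diamond v)$ for $v,w\in\{a,b,c\}$, so the graded commutator dies. Your cube picture replaces that letter-by-letter check by the single observation $\Gamma\circ T=\Gamma$, and the graded-commutator trick by the Koszul sign $T_*(x\times x)=-x\times x$. Your version handles all interior $i$ simultaneously without an induction and is more conceptual; the paper's version is more self-contained and avoids the coordinate regrouping $(I^k)^m\cong(I^m)^k$, which (as you note) is the only delicate bookkeeping in your approach. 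One tiny quibble: the sign in $T_*(x\times x)=-x\times x$ comes from the \emph{cellular} dimension $-l(x)=1$, not from the total degree $|x|=k(x)+l(x)$; they happen to coincide here.

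There is, however, a small gap in your unitality paragraph. The operators $s_i$ are not compatible with $\diamond$ in a way that lets you reduce $\Delta_\omega\circ s_i=s_i\circ\Delta_\omega$ to the individual $\diamond$-generators $a,b,c,e$: at an interior index $s_i$ merges a \emph{pair} of adjacent letters, so the honest reduction is to pairs $(e_{i-1},e_i)$. The clean way (which the paper tacitly uses) is to reduce via the DB-structure instead: the $s_i$ satisfy simplicial-type identities with $\circ$ and the $d_j$ (for instance $s_i(F\circ G)=s_i(F)\circ G$ for $i\le k(F)$, and $s_id_j=\uid$ for $i\in\{j,j+1\}$), so commutation with $\Delta_\omega$ reduces to the DB-generators $L_n$. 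Since $s_i(L_n)=0$ for $n\ge 2$, what one needs is precisely $s_i(\omega_n)=s_i(x^{\diamond(n-1)})=0$ for all $n\ge 2$ --- the full first part of the proposition, not just the case $k=1$ that your check on $b$ invokes. This is easily repaired and does not affect the main argument.
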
 

\begin{proof} 
For the purpose of this proof, we will say that an element $y\in\Omega_{m}$ is normalized if $s_{i}(y)=0$ for all $i$. The set of elements of the form $e_{1}\boxtimes\cdots\boxtimes e_{m}$, where $e_{i}\in\{a,b,c\}$ and $b$ occurs exactly once, form a basis of $\Omega_{m}^{2}(-1)$. Such a string is normalized if one of its factors is normalized; since $b$ is normalized, all elements $x\in\Omega_{m}^{2}(-1)$ are normalized. Next, suppose that all the powers $x^{\diamond k}$ are normalized for $k<n$. Then one immediately sees that $x^{\diamond n}$ is normalized if and only if 
\begin{eqnarray*}
 s_{2}(x^{\diamond n})=0.
\end{eqnarray*}
On the other hand, since
\[
 s_{2}(x^{\diamond n})=s_{2}(x\diamond x\diamond(x^{\diamond n-2}))=s_{2}(x\diamond x)\diamond(x^{\diamond n-2}),
\]
we conclude that it is enough to prove that $x\diamond x$ is normalized. Let us show that this is the case. First note that $x\diamond x=\frac{1}{2}[x,x]_{\diamond}$, where $[\,,\,]_{\diamond}$ is the graded commutator of the associative product $\diamond$. Therefore, it is enough to show that the graded commutator of two elements of the basis of $\Omega_{m}^{2}(-1)$ mentioned above is always normalized. Let $v=v_{1}\boxtimes\cdots\boxtimes v_{m}$ be a basis vector with the unique occurrence of  $b$ at position $j$, and let $w=w_{1}\boxtimes\cdots\boxtimes w_{m}$ be another basis vector with $b$ at position $l$. If $j=l$, we are done, since the factor $bb$ will appear in both terms of the commutator. If $j<l$, we have that
\[
 [v,w]  =  v_{1}\diamond w_{1}\otimes\cdots\otimes v_{m}\diamond w_{m}-w_{1}\diamond v_{1}
    \boxtimes\cdots\boxtimes w_{m}\diamond v_{m},
\]
the minus sign reflecting that the unique $b$ in $v$ ``passes over'' the unique $b$ in $w$ while carrying out the product. Since both $v$ and $w$ are normalized, we only need to show that $s_{2}$ vanishes on the commutator. This follows from the fact that $s_{2}(x\diamond y)=s_{2}(y\diamond x),$ for $x,y\in\{a,b,c\}$. This last statement can be checked by direct inspection. 
\end{proof}

\section{Tensor products of morphisms}

In the previous section, we have introduced the sets $\mathcal{MC}_{m}$ of universal Maurer-Cartan elements, and we have shown that any $\omega\in\mathcal{MC}_{m}$ induces a tensor product operation $\otimes_{\omega}$ on the objects of $\Rep^\infty(G)$. In this section, we discuss tensor product operations for the morphisms. We will proceed in a way that is completely similar to the tensor product of representations up to homotopy:
\begin{itemize}
\item introduce the notion of a DB-module over a DB-algebra. 
\item describe morphisms between representations up to homotopy in terms
of Maurer-Cartan morphisms. 
\item introduce the Maurer-Cartan module $\mathcal{T}$ (the analogue of $\Omega$). 
\item study universal Maurer-Cartan morphism $x$ (between two universal Maurer-Cartan elements
$\omega$ and $\eta$): existence and uniqueness. 
\item show that any such $x$ induces a tensor product
operation $\otimes_{x}$ on morphisms between representations up to homotopy. 
\end{itemize}
Moreover, we check that the basic properties of the resulting tensor products (e.g. associativity)
hold up to homotopy. The main conclusion will be that the homotopy category $\mathcal{D}(G)$ has a monoidal structure 
uniquely defined up to natural isomorphism.

\subsection{Universal Maurer-Cartan morphisms}

We start with the notion of a DB-module. 

\begin{definition}
Given a DB-algebra $\mathcal{A}$, a left \textbf{DB-module} $\mathcal{E}$ over $\mathcal{A}$, or simply a left $\mathcal{A}$-module, is a bigraded vector space 
\[
 \mathcal{E}=\bigoplus_{l\in\mathbb{Z},k\geq0}\mathcal{E}^{k}(l),
\]
together with a differential $\partial$ and operations $d_{i}$, as in the definition of DB-algebras, and an operation 
\[
 \circ:\mathcal{A}^{k}(l)\otimes\mathcal{E}^{k'}(l')\rmap\mathcal{E}^{k+k'}(l+l'),\ (a,x)\mapsto a\circ x.
\]
These are required to satisfy the same equations (\ref{T-algebras-Leibniz}) and (\ref{T-algebras-di}) as in Definition \ref{T-algebras}, with $a\in\mathcal{A}$, $b\in\mathcal{E}$.  Similarly, one defines the notion of right DB-module. 
\end{definition}

We will be interested in $\mathcal{A}$-$\mathcal{B}$-bimodules with $\mathcal{A}$ and $\mathcal{B}$ two DB-algebras. There is a version of   the functor 
\[ 
 \overline{K}:\underline{\mathcal{DB}ar}\rmap\overline{DGA}
\]
from the category of $\mathcal{A}$-$\mathcal{B}$-bimodules to the category of complete $\bar{\mathcal{A}}$-$\bar{\mathcal{B}}$-DG-bimodules. Thus, given an $\mathcal{A}$-$\mathcal{B}$-bimodule $\mathcal{E}$, there is a complete $\bar{\mathcal{A}}$-$\bar{\mathcal{B}}$-DG-bimodule $(\bar{\mathcal{E}},d_{\textrm{tot}})$, where the left and right actions are defined using the signed operation $\star$ (see equation \eqref{bullet}). Given two Maurer-Cartan elements $\theta\in MC(\bar{\mathcal{A}})$ and $\omega\in MC(\bar{\mathcal{B}})$, we consider the set of Maurer-Cartan $\bar{\mathcal{E}}$-morphisms (see the appendix): 
\[
 \bar{\mathcal{E}}(\omega,\theta)=\{x\in\ \bar{\mathcal{E}}^{0}:x\star\omega-\theta\star x=d_{\textrm{tot}}(x)\}.
\]

Let $G$ be a Lie groupoid over $M$ and $(E,\partial),(F,\partial)$ two cochain complexes of vector bundles over $M$. There is a $\mathcal{A}_{F}$-$\mathcal{A}_{E}$ bimodule that we will denote by $\mathcal{E}_{E,F}$ and which is defined as follows. In bidegree $(k,l)$: 
\[
 \mathcal{E}_{E,F}^{k}(l):=\Gamma(G_{k},\textrm{Hom}^{l}(s^{*}E,t^{*}F)),
\]
and the structure maps are defined exactly as for $\mathcal{A}_{E}$. We can now characterize morphisms of representations up to homotopy in this language.

\begin{proposition}\label{a homotopy gives a homotopy} 
There is a natural bijective correspondence between morphisms  \[\Phi:E\rmap F\] in $\textrm{Rep}^{\infty}(G)$ and elements 
\[
 x\in\bar{\mathcal{E}}_{E,F}(R_{E},R_{F}).
\]
Moreover, two morphisms $\Phi,\Psi:E\rmap F$ are homotopic if and only if the corresponding elements $x,y\in\bar{\mathcal{E}}_{E,F}(R_{E},R_{F})$ are homotopic in the sense of the appendix. 
\end{proposition}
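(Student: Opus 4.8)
The plan is to prove this by unwinding both sides into the component-wise formulas of Section 2 and matching them term by term, exactly as Proposition \ref{rep-MC} was obtained from the structure equations. First I would recall that an element $x\in\bar{\mathcal{E}}_{E,F}^{0}$ is an infinite sum $x=x_{0}+x_{1}+x_{2}+\cdots$ with $x_{k}\in\mathcal{E}_{E,F}^{k}(-k)=\Gamma(G_{k},\textrm{Hom}^{-k}(s^{*}E,t^{*}F))$, which is precisely the type of datum $\Phi_{k}$ in Definition \ref{definition morphism}: a smooth assignment to a string $(g_{1},\ldots,g_{k})$ of a linear map $E_{s(g_{k})}\rmap E_{t(g_{1})}'$ of degree $-k$. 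So the underlying sets of data on the two sides coincide, and the content is that the Maurer-Cartan morphism equation $x\star R_{E}-R_{F}\star x=d_{\mathrm{tot}}(x)$ in $\bar{\mathcal{E}}_{E,F}$ is equivalent, component by component in the total degree $1$ part, to the system \eqref{equations for a map}.

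Then I would carry out the bookkeeping: expand $d_{\mathrm{tot}}(x)=\partial(x)+(-1)^{n}d(x)$ on the degree-$1$ component, using that $\partial$ is the $\textrm{Hom}$-bundle differential $[\partial,-]$ (contributing the $\partial\circ\Phi_{k}+(-1)^{k}\Phi_{k}\circ\partial$ part, via the sign in \eqref{partial in A_E}) and that $d=\sum_{i}(-1)^{i}d_{i}$ produces the "$d_{i}$" term $\sum_{j=1}^{k-1}(-1)^{j}\Phi_{k-1}(g_{1},\dots,g_{j}g_{j+1},\dots,g_{k})$ exactly as in Example \ref{explanation}. Next expand the bimodule products $x\star R_{E}$ and $R_{F}\star x$ using the signed product $\star$ from \eqref{bullet}/\eqref{formula-star}: these give the two convolution sums $\sum_{i+j=k}(\pm)\Phi_{j}(g_{1},\dots,g_{j})\circ R_{i}(g_{j+1},\dots,g_{k})$ and $\sum_{i+j=k}(\pm)R'_{j}(g_{1},\dots,g_{j})\circ\Phi_{i}(g_{j+1},\dots,g_{k})$. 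The only real work is checking that every sign produced by the $(-1)^{k(a)|b|}$ factors in $\star$, together with the $(-1)^{n}$ in $d_{\mathrm{tot}}$ and the $(-1)^{l}$ in the $\textrm{Hom}$-differential, reassembles into exactly the signs displayed in \eqref{equations for a map}; this is the same kind of verification already invoked ("follows by a direct computation") for Proposition \ref{rep-MC}, so I would state it as such and perhaps indicate the matching of one representative term. For the homotopy statement, I would do the same unwinding: a homotopy $h=\{h_{k}\}$ with $h_{k}$ of degree $-k-1$ is an element of $\bar{\mathcal{E}}_{E,F}^{-1}$, and the equation \eqref{equations for a homotopy} is — term for term, by the identical sign analysis — the equation $\Phi-\Psi$ (i.e. $x-y$) $=d_{\mathrm{tot}}(h)+x\star h - h\star y$ (with the appropriate conventions) that defines homotopy of Maurer-Cartan morphisms in the appendix; the four summands on the right of \eqref{equations for a homotopy} correspond respectively to the $\partial$-part of $d_{\mathrm{tot}}$, the convolution of $h$ with $R_{E}$, the convolution of $R'$ with $h$, and the $d_{i}$-part of $d_{\mathrm{tot}}$.

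The main obstacle will be purely a matter of sign discipline: the three layers of conventions — the Koszul sign in the product $\star$ of the DGA underlying a DB-algebra, the $(-1)^{n}$ twist inside $d_{\mathrm{tot}}$, and the $[\partial,-]$ sign in the $\textrm{Hom}$-bundle differential — must be tracked carefully and shown to conspire to reproduce \eqref{equations for a map} and \eqref{equations for a homotopy} with no residual discrepancy; once the conventions of the appendix for Maurer-Cartan morphisms and their homotopies are fixed, everything else is a mechanical (if slightly tedious) identification of the two index sets $\{i+j=k\}$ and the degenerate/face contributions, together with naturality in $(E,F)$, which is immediate since all the structure maps of $\mathcal{E}_{E,F}$ are the evident pullback-and-compose operations.
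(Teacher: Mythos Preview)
Your approach is exactly the paper's: the proof there simply says that the statements follow by comparing the appendix definitions with equations \eqref{equations for a map} and \eqref{equations for a homotopy}, i.e.\ the term-by-term unwinding you describe. One small slip to fix: in the homotopy part the appendix equation is $x-y=d_{\mathrm{tot}}h+h\star R_{E}+R_{F}\star h$ (the Maurer-Cartan elements, not $x$ and $y$, appear on the right), which is in fact what your subsequent identification of the four summands uses.
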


\begin{proof}
The statements follow from comparing the definitions in the appendix with equation (\ref{equations for a map}) in the definition of morphism, and equation (\ref{equations for a homotopy}) in the definition of homotopy. 
\end{proof}

Let $\mathcal{E}$ be an $\mathcal{A}$-$\mathcal{B}$-bimodule, where $\mathcal{A}$ and $\mathcal{B}$ are DB-algebras endowed with Maurer-Cartan elements $\theta\in MC_{1}(\bar{\mathcal{A}})$ and $\omega\in MC_{1}(\bar{\mathcal{B}})$. Then, using the characteristic maps associated to $\theta$ and $\omega$, $\mathcal{E}$ can be given the structure of an $\Omega$-bimodule by the formulas 
\[
 a\circ x:=k_{\theta}(a)\circ x,\ x\circ b:=x\circ k_{\omega}(b),\ \ a,b\in\Omega,x\in\mathcal{E}.
\]
This $\Omega$-bimodule will be denoted by $\mathcal{E}_{\omega,\theta}$ and the associated DG module by $\bar{\mathcal{E}}_{\omega,\theta}$.

For an $\Omega$-bimodule $\mathcal{S}$, we denote by 
\[ 
 \textrm{Hom}_{\underline{\mathcal{DB}ar}}(\mathcal{S},\mathcal{E}_{\omega,\theta})
\]
the space of morphisms of $\Omega$-bimodules. Given $S\in\overline{\mathcal{S}}(L,L)$, where $L$ is the universal Maurer-Cartan element of $\Omega$, there is an induced map 
\begin{equation}
 \textrm{Hom}_{\underline{\mathcal{DB}ar}}(\mathcal{S},\mathcal{E}_{\omega,\theta})\rmap\bar{\mathcal{E}}(\omega,\theta),\ f\mapsto f(S).
\end{equation}

\begin{definition} 
A \textbf{universal Maurer-Cartan module} is an $\Omega$-bimodule $\mathcal{T}$, together with an element $T\in\mathcal{T}(L,L)$, with the property that 
\begin{equation}\label{univ-module}
 \textrm{Hom}_{\underline{\mathcal{DB}ar}}(\mathcal{T},\mathcal{E}_{\omega,\theta})
   \rmap\bar{\mathcal{E}}(\omega,\theta),\ f\mapsto f(T)
\end{equation}
is bijective for all $(\mathcal{A},\theta,\mathcal{E},\mathcal{B},\omega)$ as above. Given $x\in\overline{\mathcal{E}}(\omega,\eta)$, the associated map will be denoted by 
\[
	k_{x}:\mathcal{T}\rmap\mathcal{E}
\]
and will be called the \textbf{characteristic map} of $x$.
\end{definition}

\begin{theorem} 
The Maurer-Cartan DB-module $\mathcal{T}$ exists and is unique up to isomorphism. Moreover, for each $k$, $H^{m}(\mathcal{T}^{k}(\bullet),\partial)=0$
for all $m\neq0$. 
\end{theorem}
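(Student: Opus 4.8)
The plan is to mimic closely the construction and proof of Theorem~\ref{MC-exists} (the existence of $\Omega$), adapting every step from the ``algebra'' setting to the ``bimodule'' setting. Uniqueness is, as usual, formal: any two objects representing the same functor are canonically isomorphic, so once existence is established the uniqueness part is immediate. The cohomology statement will again follow from a concrete pictorial model once that model is in place.

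For existence, the first step is to identify the right combinatorial gadget. A morphism $\Phi=\{\Phi_k\}$ has components $\Phi_k\in\mathcal{E}_{E,F}^k(-k)$, i.e.\ one degree lower than the components $R_k\in\mathcal{A}_E^k(1-k)$ of a representation. Correspondingly I expect $\mathcal{T}$ to be spanned by planar forests of height two with one distinguished ``marked'' leaf (or a marked position/branch), the mark recording where the $\Phi$-operator sits relative to the $R$-operators it is composed with; equivalently, in the three-letter-word picture of \S4.2, $\mathcal{T}^{k+1}$ should be the free bimodule on words in $\{a,b,c\}$ with one extra marked letter, and in the cube picture of \S4.3 it should be the cellular chains of the cube $I^{k}$ with one coordinate ``doubled'' — morally the chains of a prism $I^{k}\times[0,1]$ or of the cube-with-a-marked-slot. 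The operations $\partial$, $d_i$, and the left/right $\Omega$-actions $a\circ x$, $x\circ b$ should all be read off from Equation~\eqref{equations for a map}: the left action inserts an $R'$-operator on trees to the left of the mark, the right action inserts an $R$-operator to the right, $\partial$ is again the cubical boundary, and the $d_i$'s again split a leaf. The universal element $T\in\mathcal{T}(L,L)$ should be $T=T_0+T_1+\cdots$ where $T_k$ is the forest with $k$ leaves and exactly one of them marked and ``isolated'' — the analogue of $L_k=b^{\diamond(k-1)}$.

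The key steps, in order, are: (i) define $\mathcal{T}$ as this free $\Omega$-bimodule with its $\partial$, $d_i$ and actions, and check the DB-module axioms — this is routine and, as with $\CT$, is cleanest done through the $\diamond$-description where $\partial$ is a derivation; (ii) check that $T$ solves the Maurer-Cartan morphism equation $T\star L - L\star T = d_{\mathrm{tot}}(T)$ in $\bar{\mathcal{T}}(L,L)$ — this is the module analogue of the identity $\partial L_k=\sum(-1)^{j+1}L_j\circ L_{k-j}+\sum(-1)^j d_j L_{k-1}$, and should again reduce to an explicit identity about $\partial_j^{0}$, $\partial_j^{1}$ applied to the marked forest $T_k$; (iii) prove the universal property by establishing a unique normal form for elements of $\mathcal{T}$ — every element is uniquely $d_{i_1}\cdots d_{i_m}(a_1\circ\cdots\circ a_p\circ T_{k}\circ b_1\circ\cdots\circ b_q)$ with $i_1>\cdots>i_m$ and $a_\ell,b_\ell$ basic generators — which forces a morphism $f:\mathcal{T}\to\mathcal{E}$ to be determined by $f(T)$ and shows conversely that any $x\in\bar{\mathcal{E}}(\omega,\theta)$ extends; (iv) deduce the cohomology vanishing from the identification of $(\mathcal{T}^{k}(\bullet),\partial)$ with the cellular chains of a contractible cube-like polytope (prism), exactly as in the last line of the proof of Theorem~\ref{MC-exists}.

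The main obstacle I anticipate is step (iii), specifically pinning down the correct bimodule normal form and verifying it is genuinely unique and exhaustive: with two one-sided actions plus the $d_i$'s plus the single marked generator $T$, one has to be careful that left and right insertions of generators around the mark, together with leaf-splittings, generate $\mathcal{T}$ freely with no hidden relations — the analogue of the clean factorization~\eqref{Factorization} but now with a two-sided flavour. A secondary subtlety is bookkeeping the signs in step (ii): the Maurer-Cartan morphism equation $x\star\omega-\theta\star x=d_{\mathrm{tot}}(x)$ carries signs coming from the total grading and from the left/right $\star$-actions defined via~\eqref{bullet}, and matching these against the signs in~\eqref{equations for a map} and in the cubical boundary requires care, though no new ideas. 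Everything else is a direct transcription of \S4.
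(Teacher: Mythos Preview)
Your approach is essentially the paper's: construct $\mathcal{T}$ explicitly as a free $\Omega$-bimodule on a sequence of generators $T_n$ (one for each $n\geq 0$), define $\partial$ on generators by the formula forced by the Maurer-Cartan morphism equation and extend by derivation, verify $\partial^2=0$ on generators, obtain the universal property from the normal form, and read off the cohomology from a cube-type identification. Your normal form $d_{i_1}\cdots d_{i_m}(L_{n_1}\circ\cdots\circ L_{n_p}\circ T_k\circ L_{m_1}\circ\cdots\circ L_{m_q})$ is exactly right and matches the paper's description of $\mathcal{T}$ as spanned by triples $(A,X,B)$ with $A,B$ forests and $X$ a single tree (possibly empty).

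One correction to your heuristic picture: the distinguished piece is a \emph{tree}, not a leaf. The generator $T_k=(1,L_k,1)$ records ``apply $\Phi_k$ to $k$ arguments'', so the middle slot carries a whole tree with $k$ leaves, not a single marked leaf. Your ``marked leaf'' model would not accommodate $T_0$ (the empty middle, corresponding to $\Phi_0$) nor distinguish $\Phi_k$ from $\Phi_1$ composed with $R$'s. This also means the topological model is not literally a prism $I^k\times[0,1]$; the paper does not spell out the analogue of the cube identification for $\mathcal{T}$ and simply asserts the cohomology computation is ``analogous to that of $\Omega$'', so you should expect to do a little more work there than your sketch suggests.
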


\begin{proof} 
The uniqueness follows from the universal property. For the existence part, we will construct $\mathcal{T}$ explicitly. As a vector space, $\mathcal{T}$ is spanned by expressions of type $(A,X,B)$, where each $A$ and $B$ are short forests in $\RS\coprod\{\emptyset\}$, and $X$ belongs to the space of short trees $\RT\coprod\{\emptyset\}$. Note that $A$, $B$ and $X$ may be the empty tree, which we denote by $1$. The bimodule structure is described by the following natural formulas: 
\[
	C\circ(A,X,B)=(C\circ A,X,B),\text{ }\quad(A,X,B)\circ C=(A,X,B\circ C).
\]
We introduce the bigrading on $\mathcal{T}$ by
\[
	k(A,X,B)=k(A)+k(X)+k(B),\ l(A,X,B)=l(A)+(l(X)-1)+l(B),
\]
where we put $k(1)=l(1)=0$. The operators $d_{i}$ are defined exactly as in the case of $\Omega$. Finally, the differential $\partial$ is defined as follows. Denote by $T_{n}$ the element $(1,L_{n},1)\in\mathcal{T}$ and $T_{0}=(1,1,1)$. We set 
\[
	\partial(T_{n})=\sum_{i=0}^{n-1}(-1)^{i}T_{i}\circ L_{n-i}-\sum_{i=1}^{n}L_{i}\circ T_{n-i}+\sum_{i=1}^{n-1}(-1)^{i+1}d_{i}(T_{n-1}),
\]
and extend $\partial$ by forcing it to be a derivation and to commute with the operators $d_{i}$. In order to prove that $\partial^{2}=0$, it is enough to show that $\partial^{2}(T_{n})=0$ for all $n$, and this can be checked by a simple computation. The universal Maurer-Cartan morphism in $\mathcal{T}$ is
\[
	T=\left(1,1,1\right)+\left(1,\;\uno\;,1\right)+\left(1,\;\dos\;,1\right)+\left(1,\;\tres\;,1\right)+\cdots.\
\]
By construction, $\mathcal{T}$ satisfies the universal property. Finally, the statement about the cohomology is analogous to that of $\Omega$.
\end{proof}

Next we consider the $\Omega_{m}$-bimodule
\[
	\mathcal{T}_{m}:=\underbrace{\mathcal{T}\boxtimes\ldots\boxtimes\mathcal{T}}_{m\ \textrm{times}}.
\]

\begin{definition}
Given $\omega, \theta\in\mathcal{MC}_{m}$, a \textbf{universal Maurer-Cartan morphism} from $\omega$ to $\theta$ is any Maurer-Cartan morphism 
$x\in \bar{\mathcal{T}}_m(\omega, \theta)$ with the property that its degree $0$ component is 
\[ x_{0} =\underbrace{T_{0}\boxtimes\ldots\boxtimes T_{0}}_{m\ \textrm{times}}.\]
We denote by $\mathcal{MC}_{m}(\omega, \theta)$ the set of such elements. 
\end{definition}

Because of the universal property of $\mathcal{T}$, an element $x\in\mathcal{MC}_{m}(\omega,\theta)$ may be interpreted as a map of $\Omega$-bimodules
\begin{eqnarray*}
	\Delta_{x}:\mathcal{T} & \longrightarrow & \mathcal{T}_{m, \omega,\theta}.
\end{eqnarray*}
 
\begin{definition} 
Let $\mathcal{E},\mathcal{E'}$ be $\Omega$-bimodules and $\phi,\phi':\mathcal{E}\rightarrow\mathcal{E'}$ morphisms. We say that $\phi$ and $\phi'$ are homotopic if there exists a degree $-1$ linear map $h$ that commutes with the $\Omega$ action and the $d_{i}$ operators such that $hd+dh=\phi-\phi'$. In this case, $h$ is called a homotopy between $\phi$ and $\phi'$. 
\end{definition}

\begin{lemma}\label{lemma homotopy is homotopy} 
Let $\mathcal{A},\mathcal{B}$ be DB algebras and $\theta,\omega$ Maurer-Cartan elements of $\mathcal{A}$ and $\mathcal{B}$, respectively. Suppose also that $\mathcal{E}$ is an $\mathcal{A}$-$\mathcal{B}$-bimodule and that $x$ and $y$ belong to $\overline{\mathcal{E}}(\omega,\theta)$. 
Then homotopies between $x$ and $y$ correspond naturally to homotopies between the characteristic maps $k_{x}:\mathcal{T}\rightarrow\mathcal{E}_{\omega,\theta}$ and $k_{y}:\mathcal{T}\rightarrow\mathcal{E}_{\omega,\theta}$. 
\end{lemma}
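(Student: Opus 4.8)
The plan is to play the universal property of the Maurer-Cartan module $\mathcal{T}$ against the fact that the universal element $T$ is, by its very construction, a Maurer-Cartan morphism from $L$ to $L$; that is, $d_{\textrm{tot}}(T)=T\star L-L\star T$ in $\bar{\mathcal{T}}$. Recall also that $k_x$ (resp. $k_y$) is the unique morphism of $\Omega$-bimodules $\mathcal{T}\to\mathcal{E}_{\omega,\theta}$ with $k_x(T)=x$ (resp. $k_y(T)=y$), that $k_x$ and $k_y$ commute with $\partial$ and with the $d_i$ --- hence with $d_{\textrm{tot}}$ --- and that on $\mathcal{E}_{\omega,\theta}$ the element $L\in\bar{\Omega}$ acts on the left as $\theta=k_\theta(L)$ and on the right as $\omega=k_\omega(L)$.

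For the direction ``a homotopy of morphisms gives a homotopy of elements'', I would start from a homotopy $H$ between $k_x$ and $k_y$ and set $h:=H(T)$. Evaluating the identity $Hd_{\textrm{tot}}+d_{\textrm{tot}}H=k_x-k_y$ at $T$ and using that $H$ is $\Omega$-bilinear --- so that $H$ turns $T\star L$ into $h\star\omega$ and $L\star T$ into $\theta\star h$ --- together with $d_{\textrm{tot}}(T)=T\star L-L\star T$, the left-hand side becomes $d_{\textrm{tot}}(h)+h\star\omega-\theta\star h$ (up to the Koszul signs dictated by the appendix's conventions), and the resulting equation with the right-hand side $x-y$ is precisely the one defining a homotopy between $x$ and $y$. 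This step is formal and uses nothing about the internal structure of $\mathcal{T}$. For the converse, given a homotopy $h$ between $x$ and $y$, I would take $H$ to be the unique $\Omega$-bilinear, $d_i$-equivariant, degree $-1$ map $\mathcal{T}\to\mathcal{E}_{\omega,\theta}$ with $H(T)=h$; its existence and uniqueness follow from the same unique factorization of elements of $\mathcal{T}$, as $d_{i_1}\cdots d_{i_m}(L_{k_1}\circ\cdots\circ T_n\circ\cdots\circ L_{j_q})$, that underlies the universal property (the module analogue of \eqref{Factorization}). It then remains to check $Hd_{\textrm{tot}}+d_{\textrm{tot}}H=k_x-k_y$; since $k_x,k_y$ are bimodule morphisms and $d_{\textrm{tot}}$ is a derivation, this reduces --- via the explicit formula for $\partial(T_n)$ and the bookkeeping already used to show that $T$ is a Maurer-Cartan morphism --- to verifying the identity on the $T_n$, where it unwinds into exactly the homotopy equation satisfied by $h$. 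Naturality in $(\mathcal{A},\theta,\mathcal{B},\omega,\mathcal{E})$ is immediate, as every construction refers only to the structure maps.

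The hard part is the last verification. Since $Hd_{\textrm{tot}}+d_{\textrm{tot}}H$ is not itself $\Omega$-bilinear, one cannot conclude that it agrees with $k_x-k_y$ everywhere merely from agreement on a generating set; one has to keep track of how $d_{\textrm{tot}}$ interacts with the $d_i$ and with the bimodule action on a factorized element, which amounts to re-running, relative to a homotopy, the computation that made $\mathcal{T}$ the Maurer-Cartan module. A more conceptual alternative would be to introduce a ``cylinder'' DB-bimodule $\mathcal{T}^{I}$ co-representing pairs of morphisms out of $\mathcal{T}$ equipped with a homotopy between them and then to invoke its universal property twice, but this only moves the computational burden into the proof that $\mathcal{T}^{I}$ is universal.
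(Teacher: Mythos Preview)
Your approach is exactly the paper's: set up the correspondence $H\leftrightarrow H(T)$ and check that the homotopy equation for $H$ matches the one for $h=H(T)$. The paper dispatches this in three lines: $H$ is determined by $H(T)$ because it commutes with the $\Omega$-action and the $d_i$'s, and then the equation $H\partial+\partial H=k_x-k_y$ ``corresponds precisely'' to $x-y=d_{\textrm{tot}}h+h\star\omega+\theta\star h$.

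The difficulty you raise in your last paragraph is not real. Your claim that $Hd_{\textrm{tot}}+d_{\textrm{tot}}H$ fails to be $\Omega$-bilinear is false once the Koszul sign rule is in force (as it must be for a degree $-1$ module map in a graded setting). First, since $H$ commutes with each $d_i$ and preserves the $k$-degree, one has $Hd_{\textrm{tot}}+d_{\textrm{tot}}H=H\partial+\partial H$. Second, if $H$ is a degree $-1$ bimodule map (so $H(a\circ m)=(-1)^{l(a)}a\circ H(m)$) and $\partial$ is a degree $+1$ derivation, a two-line computation shows their graded commutator $H\partial+\partial H$ is a degree $0$ bimodule map in the strict sense; the terms involving $\partial a$ cancel. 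Moreover $H\partial+\partial H$ commutes with $\partial$ (because $\partial^2=0$) and with the $d_i$'s. Hence $H\partial+\partial H$ is a genuine morphism of $\Omega$-DB-bimodules $\mathcal{T}\to\mathcal{E}_{\omega,\theta}$, exactly like $k_x-k_y$, and the universal property of $\mathcal{T}$ reduces the equality $H\partial+\partial H=k_x-k_y$ to its evaluation at $T$. That evaluation is precisely the homotopy equation in $\bar{\mathcal{E}}(\omega,\theta)$, using $d_{\textrm{tot}}(T)=T\star L-L\star T$ as you noted. So there is no need to ``re-run the computation relative to a homotopy'' or to build a cylinder module; the hard part you anticipate is the routine observation that the graded commutator of a module map with a derivation is again a module map.
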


\begin{proof} 
The correspondence sends a homotopy $h:\mathcal{T}\rightarrow\mathcal{E}_{\omega,\theta}$ to $h(T)\in\overline{\mathcal{E}}_{\omega,\theta}$. Clearly, since $h$ commutes with the $\Omega$ action and the $d_{i}$ operators, it is determined by the value of $h(T)$. On the other hand, the equation $hd+dh=\phi-\phi'$ corresponds precisely to the equation
\[ 
	x-y=d_{tot}h(T)+h(T)\star\omega+\theta\star h(T).
\]
\end{proof}

As before, Maurer-Cartan morphisms induce tensor product operations between morphisms of representations up to homotopy. Given $x\in\mathcal{T}_{m}(\omega,\theta)$ and morphisms between representations up to homotopy $\Phi_{i}:E_{i}\rmap F_{i}$, $1\leq i\leq m$, we define a morphism 
\[
	\Phi=\otimes_{x}(\Phi_{1},\ldots,\Phi_{m}):\otimes_{\omega}(E_{1},\ldots,E_{m})\rmap\otimes_{\theta}(F_{1},\ldots,F_{m})
\]
by specifying its characteristic map: 
\[
	k_{\Phi}:=m_{\boxtimes}\circ(k_{\Phi_{1}}\boxtimes\ldots\boxtimes k_{\Phi_{m}})\circ\Delta_{x}:\mathcal{T}\rmap\mathcal{E}_{E,F},
\]
where $E=\otimes_{\omega}(E_{1},\ldots,E_{m}),F=\otimes_{\theta}(F_{1},\ldots,F_{m})$, and 
\[
	m_{\boxtimes}:\mathcal{E}_{E_{1},F_{1}}\boxtimes\ldots\boxtimes\mathcal{E}_{E_{m},F_{m}}  \longrightarrow  \mathcal{E}_{E,F}
\]
is defined, as in Proposition \ref{mEF}, by:
\[
	m_{\boxtimes}(\phi_{1}\boxtimes\cdots\boxtimes\phi_{m})(g_{1},\dots,g_{k})  =  \phi_{_{1}}(g_{1},\dots,g_{k})\otimes\cdots
				\otimes\phi_{m}(g_{1},\dots,g_{k}).
\]

\begin{theorem}\label{theorem tensor of morphisms} 
Let $\omega,\theta\ $ be elements of $\mathcal{MC}_{m}$ and $\Phi_{i}:E_{i}\rmap F_{i}$, for $1\leq i\leq m$, morphisms of representations up to homotopy,
$E=\otimes_{\omega}(E_{1},\ldots,E_{m}),F=\otimes_{\theta}(F_{1},\ldots,F_{m})$. Then: 

\begin{enumerate}
	\item $\mathcal{MC}_{m}(\omega,\theta)$ is nonempty. Moreover, every two elements in $\mathcal{MC}_{m}(\omega,\theta)$ are homotopic in the sense of the appendix. 

	\item Any homotopy between $x,y\in\mathcal{MC}_{m}(\omega,\theta)$ induces a homotopy between the morphisms $\otimes_{x}(\Phi_{1},\ldots,\Phi_{m})$ and $\otimes_{y}(\Phi_{1},\ldots,\Phi_{m})$. 

	\item For any $x\in\mathcal{MC}_{m}(\omega,\theta)$, the zeroth component of $\otimes_{x}(\Phi_{1},\ldots,\Phi_{m})$ is the tensor product of the zeroth components of the $\Phi_{i}$'s. Also, if $\Phi_{i}$ are strict morphisms, then so is $\otimes_{x}(\Phi_{1},\ldots,\Phi_{m})$. 

	\item Symmetric Maurer-Cartan morphisms exist. Moreover, any symmetric $x$ induces a symmetric power operation on morphisms. 

	\item The tensor product of morphisms is well defined on homotopy classes. Namely, if $\Phi'_{i}$ is homotopic to $\Phi_{i}$ then $\otimes_{x}(\Phi'_{1},\ldots,\Phi'_{m})$ is homotopic to $\otimes_{x}(\Phi_{1},\ldots,\Phi_{m})$.

	\item For any $x\in\mathcal{MC}_{m}(\omega,\omega)$, the tensor product of the identity morphisms $\otimes_{x}(\uid_{E_{1}},\dots,\uid_{E_{n}})$ is homotopic to the identity morphism on $\otimes_{\omega}(E_{1},\dots,E_{n})$. 
\end{enumerate}

\end{theorem}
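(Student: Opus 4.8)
The plan is to push all six items through the dictionary supplied by Proposition~\ref{a homotopy gives a homotopy}, Lemma~\ref{lemma homotopy is homotopy} and the universal property of $(\mathcal{T},T)$: a morphism $\Phi_i\colon E_i\to F_i$ is the same as an element of $\bar{\mathcal{E}}_{E_i,F_i}(R_{E_i},R_{F_i})$, hence the same as its characteristic map $k_{\Phi_i}$, and homotopies correspond on both sides. The substance then sits in item (1), which is an obstruction argument, and in item (6), which must be moved to the universal level. For (1) I would apply the appendix results on Maurer--Cartan morphisms (proved by the same obstruction-theoretic arguments as Propositions~\ref{MC1} and~\ref{MC2}) to the complete DG bimodule $\bar{\mathcal{T}}_m$ over $\bar{\Omega}_m$ with the Maurer--Cartan elements $\omega$ and $\theta$. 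Filtering by the $k$-degree, the associated graded is $(\mathcal{T}_m^k(\bullet),\partial)=(\mathcal{T}^k(\bullet),\partial)^{\otimes m}$, so by Künneth and the cohomology vanishing for $\mathcal{T}$ stated in the theorem one has $H^l(\mathcal{T}_m^k(\bullet),\partial)=0$ for all $l\neq 0$. Since the degree-zero component of a universal Maurer--Cartan morphism is prescribed to be $T_0^{\boxtimes m}$, the obstructions both to constructing an element of $\mathcal{MC}_m(\omega,\theta)$ and to joining two such by a homotopy lie in these groups in nonzero degree, hence vanish, giving nonemptiness and uniqueness up to homotopy exactly as in the proof of Theorem~\ref{existence-diagonal}.

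Items (2)--(5) are then formal consequences. For (2) and (5): a homotopy between $x,y\in\mathcal{MC}_m(\omega,\theta)$ yields, via Lemma~\ref{lemma homotopy is homotopy}, a homotopy between $\Delta_x$ and $\Delta_y$; composing it with the fixed bimodule map $m_{\boxtimes}\circ(k_{\Phi_1}\boxtimes\cdots\boxtimes k_{\Phi_m})$ — or, for (5), forming $\boxtimes$ with a homotopy of the $k_{\Phi_i}$'s — produces a homotopy of characteristic maps, which Proposition~\ref{a homotopy gives a homotopy} turns into a homotopy of morphisms. For (3) I would evaluate $k_{\otimes_x(\Phi_1,\dots,\Phi_m)}=m_{\boxtimes}\circ(k_{\Phi_1}\boxtimes\cdots\boxtimes k_{\Phi_m})\circ\Delta_x$ on the degree-zero piece $T_0$: since $\Delta_x(T_0)=T_0^{\boxtimes m}$ and $k_{\Phi_i}(T_0)=(\Phi_i)_0$, the zeroth component comes out as $(\Phi_1)_0\otimes\cdots\otimes(\Phi_m)_0$, and when the $\Phi_i$ are strict each $k_{\Phi_i}$ kills every generator $(A,L_p,B)$ of $\mathcal{T}$ with $p\geq 1$, so a direct inspection of the formula shows the higher components vanish. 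Item (4) follows by averaging $\mathrm{Av}(x)=\tfrac1{m!}\sum_{\sigma\in S_m}\hat\sigma(x)$, the Maurer--Cartan-morphism equation and the normalization of the degree-zero component being $S_m$-invariant; a symmetric $x$ then gives a symmetric power operation on morphisms as in the object case.

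The step I expect to be the real obstacle is (6), because an endomorphism of a representation with identity zeroth component need not be null-homotopic, so the comparison has to be carried out where the $k$-graded cohomology vanishes. The key observation is that the identity morphism of a representation $E$ corresponds to the unit $\uid_E\in\bar{\mathcal{A}}_E(R_E,R_E)$, whose characteristic map factors as $\mathcal{T}\xrightarrow{\ \varepsilon\ }\Omega\xrightarrow{\ k_E\ }\mathcal{A}_E$, where $\varepsilon\colon\mathcal{T}\to\Omega$ is the canonical $\Omega$-bimodule chain map with $\varepsilon(T)=1$ (explicitly $\varepsilon(A,1,B)=A\circ B$ and $\varepsilon(A,L_p,B)=0$ for $p\geq 1$; its existence is the universal property of $\mathcal{T}$ applied to $\mathcal{E}=\Omega$, and a short check on the $T_n$ confirms it commutes with $\partial$). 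Unwinding the definitions, with $E=\otimes_\omega(E_1,\dots,E_m)$, the characteristic map of $\otimes_x(\uid_{E_1},\dots,\uid_{E_m})$ is $m_{\boxtimes}\circ(k_{E_1}\boxtimes\cdots\boxtimes k_{E_m})\circ\varepsilon^{\boxtimes m}\circ\Delta_x$, whereas that of $\uid_E$ is $m_{\boxtimes}\circ(k_{E_1}\boxtimes\cdots\boxtimes k_{E_m})\circ\Delta_\omega\circ\varepsilon$, using $k_E=m_\boxtimes\circ(k_{E_1}\boxtimes\cdots\boxtimes k_{E_m})\circ\Delta_\omega$.

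Since $m_{\boxtimes}\circ(k_{E_1}\boxtimes\cdots\boxtimes k_{E_m})\colon\Omega_m\to\mathcal{A}_E$ is a morphism of DB-algebras, hence $\Omega$-equivariant when $\Omega_m$ carries the $\Omega$-action coming from $\Delta_\omega$, it suffices to show that $\varepsilon^{\boxtimes m}\circ\Delta_x$ and $\Delta_\omega\circ\varepsilon$ are homotopic as $\Omega$-bimodule maps $\mathcal{T}\to\Omega_m$. Evaluated on $T$ these become the Maurer--Cartan morphisms $\varepsilon^{\boxtimes m}(x)$ and $\Delta_\omega(1)$ in $\bar{\Omega}_m(\omega,\omega)$, both of which have degree-zero component $1^{\boxtimes m}$; applying the obstruction argument of (1) once more, now to $\bar{\Omega}_m$ as a module over itself and using $H^l(\Omega_m^k(\bullet),\partial)=0$ for $l\neq 0$ (Theorem~\ref{MC-exists} and Künneth), these two are homotopic, and Lemma~\ref{lemma homotopy is homotopy} transports the homotopy back upstairs. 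Together with the corresponding statements for objects this gives the monoidal structure on $\mathcal{D}(G)$.
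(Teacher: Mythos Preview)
Your proposal is correct and follows essentially the same route as the paper: item (1) is Proposition~\ref{MC11} applied to $\bar{\mathcal{T}}_m$ using the Künneth argument you give, items (2)--(5) are deduced formally via Proposition~\ref{a homotopy gives a homotopy} and Lemma~\ref{lemma homotopy is homotopy} exactly as the paper does, and for item (6) your map $\varepsilon$ is the paper's $\pi$ while your factorization $\Delta_\omega\circ\varepsilon$ is the paper's $\iota$, after which both arguments reduce to comparing two Maurer--Cartan morphisms in $\bar{\Omega}_m(\omega,\omega)$ that agree modulo $F_1$ and invoke the cohomology vanishing of $\Omega_m$. The only cosmetic difference is that you package the identity-morphism characteristic map as $k_E\circ\varepsilon$ rather than defining $\iota$ directly, which is the same map.
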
 

\begin{proof} 
The first claim is a direct application of Proposition \ref{MC11} from the appendix. The second claim is a consequence of Proposition \ref{a homotopy gives a homotopy}. The third part follows from the condition 
\[
	x_{0}=T_{0}\boxtimes\ldots\boxtimes T_{0}
\]
for elements of $\mathcal{MC}_{m}(\omega,\theta)$. The statement about the symmetric elements holds, because given a Maurer-Cartan morphism, one can construct a symmetric one by averaging. 

Let us now prove that the tensor product of morphisms is well defined on homotopy classes. Since homotopy is a transitive relation, we can assume that $\phi_{i}=\phi'_{i}$ for $i>1$. Now fix a homotopy $h_{1}$ between the characteristic maps $k_{\phi_{1}}$ and $k_{\phi'_{1}}$. Then 
\[
	m\circ(h_{1}\boxtimes k_{\phi_{2}}\boxtimes\ldots\boxtimes k_{\Phi_{m}})\circ\Delta_{x}:\mathcal{T}\rmap\mathcal{E}_{E,F}
\]
is a homotopy between the characteristic maps of $\otimes_{x}(\Phi'_{1},\ldots,\Phi'_{m})$ and $\otimes_{x}(\Phi_{1},\ldots,\Phi_{m})$. By Lemma \ref{lemma homotopy is homotopy}, we conclude that the two morphisms are homotopic.

We turn now to the last claim. Consider the natural map of $\Omega$-bimodules $\pi:\mathcal{T}\rightarrow\Omega,$ defined on generators by $\pi(T_{0})=1$, and $\pi(T_{n})=0$ for $n\geq1$. In particular, $\pi$ does not vanish only for those triples $(A,X,B)$ such that $X=1$, in which case we obtain
\[
	\pi(A,1,B)  =  A\circ B.
\]
This induces a map of $\Omega$-bimodules
\[
	\pi^{\boxtimes m}:(\mathcal{T}^{\boxtimes m})_{\omega,\omega}  \longrightarrow  \Omega_{m}.
\]
Because we are taking tensor products of identity morphisms, the characteristic map of $\otimes_{x}(\uid_{E_{1}},\dots,\uid_{E_{m}})$ factors as follows:
	\begin{diagram}
		\mathcal{T} & \rTo^{\quad\Delta_x\quad}  & (\mathcal{T}^{\boxtimes m})_{\omega,\omega}    &
		\rTo^{\quad m_\boxtimes\circ(k_{\uid_{E_{1}}}\boxtimes\ldots\boxtimes
		      k_{\uid_{E_{m}}})\quad} & \mathcal{E}_{E,F}  \\
		  & \rdTo_{\pi^{\boxtimes m}\circ \Delta_x} & \dTo_{\pi^{\boxtimes m}}  & \ruTo_{\gamma}&    \\
		  &          & \Omega_m       &
	\end{diagram}
where $\gamma$ is characterized by the commutativity of the diagram. On the other hand, the characteristic map of the identity morphism of $E$ is given by the composition 
	\begin{diagram}
		\mathcal{T} & \rTo^{k_{\uid_E}} & \mathcal{E}_{E,E} \\
		\dTo^{\iota}& \ruTo_{\gamma}    &                   \\
		\Omega_m    &                   &                   \\
	\end{diagram}
where $\iota$ is the map of $\Omega$-bimodules defined on generators as  $\iota(T_{0})=1$, and $\iota(T_{n})=0$ for $n>0$. Thus, it is enough to prove that the maps $\iota$ and $\pi^{\boxtimes m}\circ\Delta_{x}$ are homotopic. In view of the universal property of $\mathcal{T}$ and Lemma  \ref{lemma homotopy is homotopy}, we only need to prove that the Maurer-Cartan morphisms $x_{\iota},x_{\pi^{\boxtimes m}\circ\Delta}\in\bar{\Omega}_{2} (\omega,\omega)$ associated to $\iota$ and $\pi^{\boxtimes m}\circ\Delta_{x}$ are homotopic in the sense of the appendix. For this, we observe that they coincide modulo $F_{1}\bar{\Omega}_{m}$, and since
\[
	H^{-p}(\Omega_{m}^{p}(\bullet),\partial)=0,\quad\forall p\geq1,
\]
we can use Proposition \ref{MC2} to conclude the claim. 
\end{proof}

\subsection{Composition of Maurer-Cartan morphisms}

We will now express the composition of morphisms of representations up to homotopy in terms of the Maurer-Cartan DB-module $\mathcal{T}$ in order to show that the homotopy category $\mathcal{D}(G)$ has a monoidal structure. For this, we need to consider the tensor product of $DB$-modules. Suppose that $\mathcal{A},\mathcal{B},\mathcal{C}$ are $DB$-algebras, $\mathcal{E}$ is an $\mathcal{A}$-$\mathcal{B}$-modules and $\mathcal{E}'$ is
an $\mathcal{B}$-$\mathcal{C}$-bimodule. Then one can construct the $\mathcal{A}$-$\mathcal{C}$ DB-bimodule $\mathcal{E}\otimes_{\mathcal{B}} \mathcal{E'}$ as follows. As a bigraded vector space, 
$\mathcal{E}\otimes_{\mathcal{B}} \mathcal{E'}$ it is the same as the vector space underlying the usual tensor product of graded bimodules over an algebra. The operators $d_i$ are given by the formulas:
\[
	d_{i}(v\otimes w)=\left\{ 
								\begin{array}{ll}
									d_{i}(v)\otimes w & \text{if $i\leq k_{1}$,}\\
									v\otimes d_{i-k_{1}}(w) & \text{if $i>k_{1}$.}
								\end{array}
						\right.
\]

The operation $\partial$ is given by:
\[\partial(v \otimes w) =\partial(v)\otimes w +(-1)^{l(v)} v \otimes \partial(w).\]

This construction has the property that: 
\[
	\overline{\mathcal{E}_{1}\otimes_{\mathcal{B}}\mathcal{E}_{2}}  \cong \overline{\mathcal{E}}_{1}\otimes_{\overline{\mathcal{B}}}\overline{\mathcal{E}}_{2}.
\]

Now let us go back to our initial goal, which is to express the composition of Maurer-Cartan morphisms in terms of $\mathcal{T}$: consider a sequence of Maurer-Cartan morphisms:
\[
	(\bar{\mathcal{A}},\omega)\overset{x_{1}}{\longrightarrow}(\bar{\mathcal{B}},\theta)\overset{x_{2}}{\longrightarrow}(\bar{\mathcal{C}},\nu),
\]
where $x_{1}\in\bar{\mathcal{E}_{1}}(\omega,\theta)$ and $x_{2}\in\bar{\mathcal{E}_{2}}(\theta,\nu)$.

As explained in the appendix, the composition $x_{2}\circ x_{1}$ is defined as the tensor product $x_{1}\otimes_{\bar{\mathcal{B}}}x_{2}$ in the $\bar{\mathcal{A}}$-$\bar{\mathcal{C}}$-DG-bimodule $\bar{\mathcal{E}_{1}}\otimes_{\bar{\mathcal{B}}}\bar{\mathcal{E}}_{2}$. The characteristic map $k_{x_{2}\circ x_{1}}$ of the composition $x_{2}\circ x_{1}$ can be expressed in terms of the characteristic maps $k_{x_{1}}$ and $k_{x_{2}}$ as follows:

\begin{equation}\label{eq:morphism comp}
 \mathcal{T}\overset{\Lambda}{\longrightarrow}\mathcal{T}\otimes_{\Omega}\mathcal{T}
 \overset{k_{x_{1}}\otimes_{\Omega}k_{x_{2}}}{\longrightarrow}(\mathcal{E}_{1})_{\omega,\theta}\otimes_{\Omega}
 (\mathcal{E}_{2})_{\theta,\nu} \longrightarrow(\mathcal{E}_{1}\otimes_{\mathcal{B}}\mathcal{E}_{2})_{\omega,\nu},
\end{equation}
where $\Lambda$ a canonical map of $\Omega$-bimodules that is completely determined by
\[
 \Lambda(T_{n})=\sum_{i=0}^{n}T_{i}\otimes T_{n-i}.
\]
For morphisms of representations up to homotopy, 
\[
 (\bar{\mathcal{A}}_{E_{1}},R_{1})\overset{\phi_{1}}{\longrightarrow}(\bar{\mathcal{A}}_{E_{2}},R_{2})
 \overset{\phi_{2}}{\longrightarrow}(\bar{\mathcal{A}}_{E_{3}},R_{3}),
\]
the morphisms live in the bimodules $\bar{\mathcal{E}}_{E_{1},E_{2}}$ and $\bar{\mathcal{E}}_{E_{2},E_{3}}$. In order to obtain the characteristic map of the composition $k_{\phi_{2}\circ\phi_{1}}$ we need to further compose \eqref{eq:morphism comp} with the canonical morphism
\[
 m_{\otimes}:\mathcal{E}_{E_{1},E_{2}}\otimes_{\Omega}\mathcal{E}_{E_{2},E_{3}}  \longrightarrow  \mathcal{E}_{E_{1},E_{3}},
\]
which is defined as
\[
 m_{\otimes}(\phi\otimes\psi)(g_{1},\dots,g_{n})  =  \psi(g_{1},\dots,g_{k})\circ\phi(g_{k+1},\dots,g_{n}).
\]

Our next goal is to prove that tensor products and compositions of representation up to homotopy are compatible 

\begin{proposition}\label{pro: compatibility}
Suppose we have $\omega_{1},\omega_{2},\omega_{3}\in\mathcal{MC}_{2}$, together with
\[
 x_{1}\in\mathcal{MC}_{2}(\omega_{1},\omega_{2}),\; 
 x_{2}\in\mathcal{MC}_{2}(\omega_{2},\omega_{3}),\; 
 x_{3}\in\mathcal{MC}_{2}(\omega_{1},\omega_{3}).
\]
Then for any representations up to homotopy $E_i$ and $F_i$, for $i=1,2,3$, and any morphisms  $\phi_{j}:E_{j}\rightarrow E_{j+1}$ and $\psi_{j}:F_{j}\rightarrow F_{j+1}$, for $j=1,2$, the composition of the tensor products 
\[
 E_{1}\otimes_{\omega_{1}}F_{1}
  \overset{\phi_{1}\otimes_{x_{1}}\psi_{1}}{\longrightarrow}
 E_{2}\otimes_{\omega_{2}}F_{2}
  \overset{\phi_{2}\otimes_{x_{2}}\psi_{2}}{\longrightarrow}
 E_{3}\otimes_{\omega_{3}}F_{3}
\]
is homotopic to the tensor product of the compositions
\[
 (\phi_{2}\circ\phi_{1})\otimes_{x_{3}}(\psi_{2}\circ\psi_{1}).
\]
\end{proposition}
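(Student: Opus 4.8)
The plan is to push the statement up to the universal level, where the ambient bimodules have vanishing cohomology in the relevant range, and then to finish by the same obstruction argument used in the last part of the proof of Theorem \ref{theorem tensor of morphisms}. First, note that both composites are morphisms from $E:=E_{1}\otimes_{\omega_{1}}F_{1}$ to $F:=E_{3}\otimes_{\omega_{3}}F_{3}$ in $\Rep^{\infty}(G)$, so by Proposition \ref{a homotopy gives a homotopy} it is enough to show that their associated Maurer-Cartan morphisms in $\bar{\mathcal{E}}_{E,F}(R_{E},R_{F})$ are homotopic; and by the universal property of $\mathcal{T}$ together with Lemma \ref{lemma homotopy is homotopy}, this is in turn equivalent to showing that the two characteristic maps $\mathcal{T}\rmap\mathcal{E}_{E,F}$ are homotopic as maps of $\Omega$-bimodules.

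The next step is to rewrite both characteristic maps so that they display a common, ``$\phi,\psi$-dependent'' leg. Expanding the composition formula \eqref{eq:morphism comp} and the definition of $\otimes_{x}$, and using the natural interchange isomorphisms relating the product $\boxtimes$ of DB-bimodules to the tensor product $\otimes$ over a middle DB-algebra, together with the compatibilities of $m_{\boxtimes}$, $m_{\otimes}$ and $\Lambda$, one gets
\[
 k_{(\phi_{2}\otimes_{x_{2}}\psi_{2})\circ(\phi_{1}\otimes_{x_{1}}\psi_{1})}=\Theta\circ g_{L},\qquad
 k_{(\phi_{2}\circ\phi_{1})\otimes_{x_{3}}(\psi_{2}\circ\psi_{1})}=\Theta\circ g_{R},
\]
where $\Theta:\mathcal{M}\rmap\mathcal{E}_{E,F}$ is a single map of $\Omega$-bimodules assembled out of $k_{\phi_{1}},k_{\phi_{2}},k_{\psi_{1}},k_{\psi_{2}}$ by means of the multiplication maps $m_{\boxtimes}$ and $m_{\otimes}$, $\mathcal{M}$ is the universal auxiliary $\Omega$-bimodule built from $(\mathcal{T}\otimes_{\Omega}\mathcal{T})\boxtimes(\mathcal{T}\otimes_{\Omega}\mathcal{T})$ (with the $\Omega$-actions induced by $\Delta_{\omega_{1}}$ and $\Delta_{\omega_{3}}$), and $g_{L},g_{R}:\mathcal{T}\rmap\mathcal{M}$ are \emph{universal} maps of $\Omega$-bimodules that do not depend on the $E_{i}$, $F_{i}$, $\phi_{j}$, $\psi_{j}$: concretely $g_{L}$ is obtained from $\Lambda$ and $\Delta_{x_{1}}\otimes_{\Omega}\Delta_{x_{2}}$, while $g_{R}$ is obtained from $\Delta_{x_{3}}$ and $\Lambda\boxtimes\Lambda$. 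It therefore suffices to prove that $g_{L}$ and $g_{R}$ are homotopic as maps of $\Omega$-bimodules.

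To prove this last statement I would argue by obstruction theory. By the bimodule version of Lemma \ref{lemma homotopy is homotopy}, $g_{L}$ and $g_{R}$ correspond to Maurer-Cartan morphisms $y_{L},y_{R}\in\bar{\mathcal{M}}$, both from the Maurer-Cartan element determined by $\omega_{1}$ to the one determined by $\omega_{3}$, and it is enough to show that $y_{L}$ and $y_{R}$ are homotopic in the sense of the appendix. Using $\Lambda(T_{0})=T_{0}\otimes T_{0}$, $\Delta_{x_{j}}(T_{0})=T_{0}\boxtimes T_{0}$ and the normalisation of the universal Maurer-Cartan morphisms, one checks that $y_{L}$ and $y_{R}$ have the same component in filtration degree $0$, i.e.\ $y_{L}\equiv y_{R}$ modulo $F_{1}\bar{\mathcal{M}}$. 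On the other hand, a Künneth computation starting from the vanishing of $H^{m}(\Omega^{k}(\bullet),\partial)$ and $H^{m}(\mathcal{T}^{k}(\bullet),\partial)$ for $m\neq 0$ (Theorem \ref{MC-exists} and the corresponding theorem for $\mathcal{T}$), combined with the behaviour of cohomology under $\boxtimes$ and under $\otimes$ over the middle algebra, gives $H^{m}(\mathcal{M}^{p}(\bullet),\partial)=0$ for all $p\geq 1$ and all $m\neq 0$. Thus the filtration of $\bar{\mathcal{M}}$ by the $k$-degree satisfies the hypotheses of Proposition \ref{MC2} of the appendix (for $r=2$), which yields that $y_{L}$ and $y_{R}$ are homotopic. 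Unwinding the chain of reductions then proves the proposition.

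The hard part will be the middle step: making the interchange isomorphisms and the compatibilities among $m_{\boxtimes}$, $m_{\otimes}$, $\Lambda$ and the various module actions precise — with the correct Koszul signs — so that both characteristic maps genuinely factor through \emph{the same} map $\Theta$ out of $\mathcal{M}$. The Künneth-type cohomology computation for $\mathcal{M}$ in the last step is more routine, since it reduces to the already-established vanishing for $\Omega$ and for $\mathcal{T}$.
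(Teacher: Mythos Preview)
Your strategy is essentially the paper's own: factor both characteristic maps through a common ``$\phi,\psi$-dependent'' leg $\Theta$ with target $\mathcal{E}_{E,F}$ and source the universal bimodule $\mathcal{M}=(\mathcal{T}\otimes_{\Omega}\mathcal{T})\boxtimes(\mathcal{T}\otimes_{\Omega}\mathcal{T})$, reduce to comparing the two universal maps $g_L$ and $g_R$ into $\mathcal{M}$, check that they agree in filtration degree $0$, and finish by an obstruction argument using the K\"unneth-type vanishing of $H^{m}(\mathcal{M}^{p}(\bullet),\partial)$ for $m\neq 0$. The paper writes out explicitly the interchange map $p:(\mathcal{T}\boxtimes\mathcal{T})\otimes_{\Omega}(\mathcal{T}\boxtimes\mathcal{T})\rmap(\mathcal{T}\otimes_{\Omega}\mathcal{T})\boxtimes(\mathcal{T}\otimes_{\Omega}\mathcal{T})$ (with the Koszul sign $(-1)^{l(b)l(c)}$) that you gesture at; this is exactly the ``hard part'' you flag, and the paper confirms it goes through.

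Two small corrections. First, the final obstruction step should invoke Proposition \ref{MC11}, not Proposition \ref{MC2}: you are asking for a \emph{homotopy} between two Maurer-Cartan \emph{morphisms} in a bimodule, which is the ``moreover'' clause of Proposition \ref{MC11}, whereas Proposition \ref{MC2} concerns strong gauge equivalence of Maurer-Cartan \emph{elements} in a DGA. Second, since $y_{L}\equiv y_{R}$ modulo $F_{1}\bar{\mathcal{M}}$, the relevant value is $r=1$, not $r=2$; what is needed is $H^{0}(F_{p}\bar{\mathcal{M}}/F_{p+1}\bar{\mathcal{M}},d_{\omega_{1},\omega_{3}})=0$ for all $p\geq 1$, which after the degree shift is precisely $H^{-p}(\mathcal{M}^{p}(\bullet),\partial)=0$ --- and this is what your K\"unneth argument gives.
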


\begin{proof}
There is a morphism of $\Omega$-bimodules:
 \begin{equation*}
 {u}: (\mathcal{T} \otimes_{\Omega} \mathcal{T} )\boxtimes (\mathcal{T} \otimes_{\Omega} \mathcal{T}) \rightarrow \overline{\mathcal{E}}_{E_1\otimes E_3, F_1 \otimes F_3},
 \end{equation*}
 defined by the formula $u=m\circ (u_E \boxtimes u_F)$ where:
 \begin{equation*}
u_E: \mathcal{T} \otimes_{\Omega} \mathcal{T} \rightarrow  \overline{\mathcal{E}}_{E_1, E_3}
 \end{equation*}
 is the composition of the map:
  \begin{equation*}
k_{\phi_2} \otimes_{\Omega} k_{\phi_1} : \mathcal{T} \otimes_{\Omega} \mathcal{T} \rightarrow  \overline{\mathcal{E}}_{E_2, E_3}\otimes_{\Omega}  \overline{\mathcal{E}}_{E_1, E_2}
 \end{equation*}
 with the natural quotient map:
 \begin{equation*}
   \overline{\mathcal{E}}_{E_2, E_3}\otimes_{\Omega}  \overline{\mathcal{E}}_{E_1, E_2}\rightarrow  \overline{\mathcal{E}}_{E_1, E_3}.
 \end{equation*}
 Similarly,
  \begin{equation*}
u_F: \mathcal{T} \otimes_{\Omega} \mathcal{T} \rightarrow  \overline{\mathcal{E}}_{F_1, F_3}
 \end{equation*}
 is the composition of the map:
  \begin{equation*}
k_{\psi_2} \otimes_{\Omega} k_{\psi_1} : \mathcal{T} \otimes_{\Omega} \mathcal{T} \rightarrow  \overline{\mathcal{E}}_{F_2, F_3}\otimes_{\Omega}  \overline{\mathcal{E}}_{F_1, F_2}
 \end{equation*}
 with the natural quotient map:
 \begin{equation*}
   \overline{\mathcal{E}}_{F_2, F_3}\otimes_{\Omega}  \overline{\mathcal{E}}_{F_1, F_2}\rightarrow  \overline{\mathcal{E}}_{F_1, F_3}.
 \end{equation*}
There is also a canonical morphism of $\Omega$-bimodules 
 \begin{equation*}
 p: (\mathcal{T} \boxtimes \mathcal{T})\otimes_{\Omega}  (\mathcal{T} \boxtimes \mathcal{T}) \rightarrow (\mathcal{T} \otimes_{\Omega} \mathcal{T})\boxtimes  (\mathcal{T} \otimes_{\Omega} \mathcal{T}),
 \end{equation*}
 given by the formula:
 \begin{equation*}
 (a\boxtimes b)\otimes (c\boxtimes d) \mapsto (-1)^{l(b)l(c)}(a\otimes c )\boxtimes (b\otimes d).
 \end{equation*}
A simple computation shows that
 the characteristic map $k: \mathcal{T}\rightarrow \overline{\mathcal{E}}_{E_1\otimes F_1,E_3 \otimes F_3}$ of $(\Phi_2 \otimes \Psi_2) \circ (\Phi_1\otimes \Psi_1)$ is given by the composition
  \begin{equation*}
 k=u\circ p \circ  (\Delta_{x_1} \otimes \Delta_{x_2}) \circ \Lambda:\mathcal{T} \rightarrow \overline{\mathcal{E}}_{E_1\otimes F_1,E_3 \otimes F_3}.
 \end{equation*}

On the other hand, the characteristic map $k': \mathcal{T}\rightarrow \overline{\mathcal{E}}_{E_1\otimes F_1,E_3 \otimes F_3}$ of $(\Phi_2 \circ \Phi_1) \otimes (\Psi_2 \circ \Psi_1)$ is given by the composition
 \begin{equation*}
 k'=u\circ (\Lambda \boxtimes \Lambda) \circ \Delta_{x_3}:\mathcal{T} \rightarrow \overline{\mathcal{E}}_{E_1\otimes F_1,E_3 \otimes F_3}.
 \end{equation*}
 We need to prove that the maps $k$ and $k'$ are homotopic. Clearly, it is enough to prove that the maps:
 \begin{equation*}
 p \circ  (\Delta_{x_1} \otimes \Delta_{x_2}) \circ \Lambda:\mathcal{T} \rightarrow (\mathcal{T} \otimes_{\Omega} \mathcal{T}) \boxtimes  (\mathcal{T} \otimes_{\Omega} \mathcal{T}),
 \end{equation*}
 and
 \begin{equation*}
 (\Lambda \boxtimes \Lambda) \circ \Delta_{x_3}:\mathcal{T} \rightarrow (\mathcal{T} \otimes_{\Omega} \mathcal{T}) \boxtimes  (\mathcal{T} \otimes_{\Omega} \mathcal{T}),
 \end{equation*}
are homotopic. Let us now prove that this last statement is true for any two maps of $\Omega$-bimodules $a,b:\mathcal{T}\rightarrow (\mathcal{T} \otimes_{\Omega} \mathcal{T}) \boxtimes  (\mathcal{T} \otimes_{\Omega} \mathcal{T})$, provided they take the same value on $T_0$, which is satisfied in our case. In order to simplify the notation, let us denote the $\Omega$-bimodule  $(\mathcal{T} \otimes_{\Omega} \mathcal{T}) \boxtimes  (\mathcal{T} \otimes_{\Omega} \mathcal{T})$ by $\mathcal{P}$.  In view of the universal property of $\mathcal{T}$ we can identify $a$ and $b$ with Maurer-Cartan morphisms $\overline{a}, \overline{b} \in \overline{\mathcal{P}}(L,L)$. Also, Lemma \ref{lemma homotopy is homotopy} implies that it is enough to prove that the Maurer-Cartan morphisms $\overline{a}$ and $\overline{b}$ are homotopic. Since $a$ and $b$ coincide when applied
to $T_0$ we know that 
\begin{equation*}
\overline{a} \equiv \overline{b} \, \text{mod} (F_1(\overline{\mathcal{P}})).
\end{equation*}
 By Proposition \ref{MC11} applied to the case $r=1$, we know that it is enough to prove that 
 \begin{equation*}
 H^{0}(F_k(\overline{\mathcal{P}})/ F_{k+1}(\overline{\mathcal{P}}),d_{L,L})=0, \text{ for all } k\geq 	1.
 \end{equation*}
 On the other hand, $F_k(\overline{\mathcal{P}})/ F_{k+1}(\overline{\mathcal{P}})$ is naturally isomorphic to 
 $\mathcal{P}^k$ and, because of the shift in degree, we obtain that:
  \begin{equation*}
 H^{0}(F_k(\overline{\mathcal{P}})/ F_{k+1}(\overline{\mathcal{P}}),d_{L,L})=H^{-k}(\mathcal{P}^k).
 \end{equation*}
 Thus, all we need to prove is that the cohomology of $\mathcal{P}^k$ vanishes in negative degree. For this we observe
 that the complex
 \begin{equation*}
\mathcal{P}^k= ((\mathcal{T} \otimes_{\Omega} \mathcal{T}) \boxtimes  (\mathcal{T} \otimes_{\Omega} \mathcal{T}))^k
 \end{equation*}
 is the tensor product of two copies of the complex $(\mathcal{T} \otimes_{\Omega} \mathcal{T})^k$ and therefore, by K\"unneth's formula,
 it is enough to prove that the cohomology of that last complex vanishes in negative degree. This last claim follows from the 
 fact that the cohomology of $\mathcal{T}$ vanishes in negative degree.

\end{proof}

\subsection{Monoidal structure on $\mathcal{D}(G)$}

In this paragraph, we show that the tensor product operations defined above give the homotopy category $\mathcal{D}(G)$ a monoidal structure.  
The reader is referred to \cite{MacLane} for the basic facts and definitions concerning monoidal categories.

\begin{proposition}
Let us fix a universal Maurer-Cartan element $\omega\in\mathcal{MC}_{2}$ and a universal Maurer-Cartan morphism $x\in\mathcal{MC}_{2}(\omega,\omega)$.  
The corresponding operations of tensor product defines a functor:
\[
	\otimes_{\omega,x}:\mathcal{D}(G)\times \mathcal{D}(G) \longrightarrow \mathcal{D}(G).
\]

Moreover, this functor does not depend on $x$.
\end{proposition}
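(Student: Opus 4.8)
The plan is to verify directly that $\otimes_{\omega,x}$ satisfies the axioms of a functor on the product category, drawing each required identity from results already established in this section. On objects, $\otimes_{\omega,x}$ sends a pair $(E,F)$ to $E\otimes_{\omega}F$, which does not involve $x$. On morphisms, given homotopy classes $[\Phi]:E_{1}\rmap E_{2}$ and $[\Psi]:F_{1}\rmap F_{2}$ in $\mathcal{D}(G)$, we set $[\Phi]\otimes_{\omega,x}[\Psi]:=[\Phi\otimes_{x}\Psi]$. First one records that $\Phi\otimes_{x}\Psi$ is a genuine morphism $E_{1}\otimes_{\omega}F_{1}\rmap E_{2}\otimes_{\omega}F_{2}$ (this is the construction preceding Theorem \ref{theorem tensor of morphisms}), and then that the class $[\Phi\otimes_{x}\Psi]$ depends only on $[\Phi]$ and $[\Psi]$: this is exactly part (5) of Theorem \ref{theorem tensor of morphisms}, which says replacing $\Phi$ or $\Psi$ by a homotopic morphism changes $\Phi\otimes_{x}\Psi$ only up to homotopy.

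Next I would check the two functoriality identities. For units: the pair $(\uid_{E_{1}},\uid_{F_{1}})$ is sent to $\uid_{E_{1}}\otimes_{x}\uid_{F_{1}}$, which by part (6) of Theorem \ref{theorem tensor of morphisms} (with $n=2$) is homotopic to the identity morphism of $E_{1}\otimes_{\omega}F_{1}$; hence the two agree in $\mathcal{D}(G)$. For composition: the composite of $(\Phi_{1},\Psi_{1})$ and $(\Phi_{2},\Psi_{2})$ in $\mathcal{D}(G)\times\mathcal{D}(G)$ is the pair $(\Phi_{2}\circ\Phi_{1},\Psi_{2}\circ\Psi_{1})$, so functoriality requires $(\Phi_{2}\otimes_{x}\Psi_{2})\circ(\Phi_{1}\otimes_{x}\Psi_{1})$ to be homotopic to $(\Phi_{2}\circ\Phi_{1})\otimes_{x}(\Psi_{2}\circ\Psi_{1})$. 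This is precisely Proposition \ref{pro: compatibility}, specialized to $\omega_{1}=\omega_{2}=\omega_{3}=\omega$ and $x_{1}=x_{2}=x_{3}=x$. Taken together, these verifications show that $\otimes_{\omega,x}$ is a well-defined functor $\mathcal{D}(G)\times\mathcal{D}(G)\rmap\mathcal{D}(G)$.

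For independence of $x$, let $x,x'\in\mathcal{MC}_{2}(\omega,\omega)$. On objects the two functors coincide verbatim. On morphisms, part (1) of Theorem \ref{theorem tensor of morphisms} gives a homotopy between $x$ and $x'$ inside $\mathcal{MC}_{2}(\omega,\omega)$, and part (2) then produces, from that homotopy, a homotopy between the morphisms $\Phi\otimes_{x}\Psi$ and $\Phi\otimes_{x'}\Psi$. Hence these two morphisms represent the same class in $\mathcal{D}(G)$, so $\otimes_{\omega,x}=\otimes_{\omega,x'}$ as functors, which is the second assertion.

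The argument is essentially an assembly of earlier results, so I do not expect a real obstacle; the only points requiring care are bookkeeping ones — making sure that composition in the product category $\mathcal{D}(G)\times\mathcal{D}(G)$ is matched exactly by the specialization of Proposition \ref{pro: compatibility}, and that the homotopies produced by Theorem \ref{theorem tensor of morphisms}(2),(5),(6) are homotopies in the precise sense of equation \eqref{equations for a homotopy}, so that they genuinely identify morphisms in $\mathcal{D}(G)$. No coherence constraints (associators, unitors) enter at this stage; those are handled in the subsequent construction of the monoidal structure on $\mathcal{D}(G)$.
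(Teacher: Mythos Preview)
Your proof is correct and follows essentially the same approach as the paper: well-definedness on homotopy classes via part (5) of Theorem \ref{theorem tensor of morphisms}, the unit axiom via part (6), the composition axiom via Proposition \ref{pro: compatibility} specialized to $\omega_1=\omega_2=\omega_3=\omega$ and $x_1=x_2=x_3=x$, and independence of $x$ via parts (1) and (2). Your write-up is in fact slightly more explicit than the paper's in pinpointing which parts of the theorem are invoked at each step.
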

\begin{proof}
First we observe that Theorem \ref{theorem tensor of morphisms} guaranties that the tensor product operation is well defined on homotopy classes of morphisms and therefore
this map is well defined.
Theorem \ref{theorem tensor of morphisms} and Proposition \ref{pro: compatibility} tell us that
\begin{eqnarray*}
	\uid_{E}\otimes_{x}\uid_{F} 
									& \sim &  \uid_{E\otimes_{\omega}F},\\
	(\phi_{2}\circ\phi_{1})\otimes_{x}(\psi_{2}\circ\phi_{1})  
									& \sim  & (\phi_{2}\otimes_{x}\psi_{2})\circ(\phi_{1}\otimes_{x}\psi_{1}).
\end{eqnarray*}
Observe also that $\otimes_{\omega,x}$ does not depend on $x$ since any two $x,x'\in\mathcal{MC}_{2}(\omega,\omega)$ are homotopic and, thus, the corresponding morphisms coincide in $\mathcal{D}(G)$.
\end{proof}

\begin{proposition} \label{pro: equivalence of bifunctors}
Let $\omega,\omega'\in\mathcal{MC}_{2}$ be two universal Maurer-Cartan elements. Then the bifunctors $\otimes_{\omega}$ and $\otimes_{\omega'}$ on $\mathcal{D}(G)$ are equivalent. Moreover, for any two representations up to homotopy $E_1$ and $E_2$, 
the isomorphisms 
\[
	\uid_{E_{1}}\otimes_{y}\uid_{E_{2}}:E_{1}\otimes_{\omega}E_{2}  \longrightarrow  E_{1}\otimes_{\omega'}E_{2},
\]
represents an isomorphism in $\mathcal{D}(G)$ which is independent of the choice of $y\in\mathcal{MC}_{2}(\omega,\omega')$.
\end{proposition}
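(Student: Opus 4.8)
The plan is to show that the maps $\uid_{E_1}\otimes_y\uid_{E_2}$ are the components of a natural isomorphism of bifunctors and to deduce everything from Theorem~\ref{existence-diagonal}, Theorem~\ref{theorem tensor of morphisms} and Proposition~\ref{pro: compatibility}. First I would fix some $y\in\mathcal{MC}_2(\omega,\omega')$ (nonempty by Theorem~\ref{theorem tensor of morphisms}(1)) and check that $\uid_{E_1}\otimes_y\uid_{E_2}$ is an isomorphism in $\mathcal{D}(G)$. The two representations $E_1\otimes_{\omega}E_2$ and $E_1\otimes_{\omega'}E_2$ share the same underlying complex, and the common condition $\omega_1=\omega'_1=e^{\boxtimes 2}$ forces the same (diagonal) $R_1$; moreover, by Theorem~\ref{theorem tensor of morphisms}(3) the zeroth component of $\uid_{E_1}\otimes_y\uid_{E_2}$ is $\uid_{E_1}\otimes\uid_{E_2}=\uid_{E_1\otimes E_2}$. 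Since a morphism of representations up to homotopy is an isomorphism precisely when its zeroth component is an isomorphism of graded vector bundles (see \S2), the map is an isomorphism in $\Rep^\infty(G)$, hence in $\mathcal{D}(G)$. Independence of the choice of $y$ is immediate: any two elements of $\mathcal{MC}_2(\omega,\omega')$ are homotopic by Theorem~\ref{theorem tensor of morphisms}(1), hence induce homotopic morphisms by Theorem~\ref{theorem tensor of morphisms}(2), hence the same arrow in $\mathcal{D}(G)$.

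Second, I would prove naturality. Fix $x\in\mathcal{MC}_2(\omega,\omega)$ and $x'\in\mathcal{MC}_2(\omega',\omega')$ used to implement $\otimes_{\omega}$ and $\otimes_{\omega'}$ on morphisms, and morphisms $\phi_i\colon E_i\to F_i$, $i=1,2$. One must check that
\[
(\phi_1\otimes_{x'}\phi_2)\circ(\uid_{E_1}\otimes_y\uid_{E_2})\ \sim\ (\uid_{F_1}\otimes_y\uid_{F_2})\circ(\phi_1\otimes_{x}\phi_2)
\]
as morphisms in $\mathcal{D}(G)$. Applying Proposition~\ref{pro: compatibility} to the left-hand composition, with $(\omega_1,\omega_2,\omega_3)=(\omega,\omega',\omega')$, $x_1=y$, $x_2=x'$ and a chosen $x_3=y_1\in\mathcal{MC}_2(\omega,\omega')$, the identity morphisms get absorbed and we obtain a homotopy to $\phi_1\otimes_{y_1}\phi_2$. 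Applying it to the right-hand composition, with $(\omega_1,\omega_2,\omega_3)=(\omega,\omega,\omega')$, $x_1=x$, $x_2=y$ and a chosen $x_3=y_2\in\mathcal{MC}_2(\omega,\omega')$, we obtain a homotopy to $\phi_1\otimes_{y_2}\phi_2$. Finally $y_1$ and $y_2$ both lie in $\mathcal{MC}_2(\omega,\omega')$, so they are homotopic by Theorem~\ref{theorem tensor of morphisms}(1), whence $\phi_1\otimes_{y_1}\phi_2\sim\phi_1\otimes_{y_2}\phi_2$ by Theorem~\ref{theorem tensor of morphisms}(2), and the square commutes in $\mathcal{D}(G)$.

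Third, I would simply observe that the family of isomorphisms $\{\uid_{E_1}\otimes_y\uid_{E_2}\}$, one for each pair $(E_1,E_2)$, together with the naturality square just established, is by definition a natural isomorphism between the bifunctors $\otimes_{\omega}$ and $\otimes_{\omega'}$ on $\mathcal{D}(G)\times\mathcal{D}(G)$; this is the claimed equivalence of bifunctors, and the accompanying assertions about the explicit isomorphism and its independence of $y$ have been verified along the way.

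I do not expect a genuine obstacle: the statement is an assembly of the earlier results. The one point that requires care is the bookkeeping in the two applications of Proposition~\ref{pro: compatibility} — one must keep track of which set $\mathcal{MC}_2(\cdot,\cdot)$ each Maurer-Cartan morphism belongs to and use repeatedly that each such set is nonempty with all its elements mutually homotopic, so that every auxiliary choice ($x$, $x'$, $y$, $y_1$, $y_2$) becomes invisible once one passes to $\mathcal{D}(G)$.
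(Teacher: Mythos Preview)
Your proof is correct and follows essentially the same approach as the paper, which also reduces naturality to two applications of Proposition~\ref{pro: compatibility}. The only difference is a minor streamlining: in both applications the paper takes $x_3=y$ itself (this is allowed since $y\in\mathcal{MC}_2(\omega,\omega')$ in each case), so both sides of the square are directly homotopic to $\phi_1\otimes_y\phi_2$ and there is no need to introduce auxiliary $y_1,y_2$ and invoke Theorem~\ref{theorem tensor of morphisms}(1),(2) a second time. Your more explicit treatment of the isomorphism property and of independence from $y$ is fine and fills in details the paper leaves implicit.
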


\begin{proof}
We need to check that $(\phi_{1}\otimes_{x'}\phi_{2})\circ(\uid_{E_{1}}\otimes_{y}\uid_{E_{2}})$ is homotopic to $(\uid_{F_{1}}\otimes_{y}\uid_{F_{2}})\circ(\phi_{1}\otimes_{x}\phi_{2})$ for any $x\in\mathcal{MC}_{2}(\omega,\omega)$ and for any $x'\in\mathcal{MC}_{2}(\omega',\omega')$ and for any two representation morphisms $\phi_{i}:E_{i}\rightarrow F_{i}$, $i=1,2$. This is guaranteed by Proposition \ref{pro: compatibility}, which tells us that
\[
	(\phi_{1}\otimes_{x'}\phi_{2})\circ(\uid_{E_{1}}\otimes_{y}\uid_{E_{2}})\;
				\sim\;\phi_{1}\otimes_{y}\phi_{2}\;
				\sim\;(\uid_{F_{1}}\otimes_{y}\uid_{F_{2}})\circ(\phi_{1}\otimes_{x}\phi_{2}).
\]
\end{proof}

We now prove that the bifunctor $\otimes_{\omega}$ endows the homotopy category $\mathcal{D}(G)$ with the structure of a monoidal category. 

\begin{theorem} 
For every $\omega\in\mathcal{MC}_{2}$, the functor $\otimes_\omega: \mathcal{D}(G)\times \mathcal{D}(G) \rightarrow  \mathcal{D}(G)$ gives the 
category $\mathcal{D}(G)$ a monoidal structure with unit object given by the trivial representation. Moreover, any two choices of $\omega$ give naturally equivalent monoidal categories. 
\end{theorem}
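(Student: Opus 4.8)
\emph{Overall approach.} The plan is to establish the monoidal structure first for the distinguished element $\omega_{0}\in\mathcal{MC}_{2}$ of Corollary~\ref{space of diagonals} — the rigid, associative one, whose characteristic map $\Delta_{\omega_{0}}$ is the coassociative Serre diagonal of the cubical complex (\S\ref{sub: Cube Description }) — where everything turns out to be \emph{strict}, and then to transfer the structure to an arbitrary $\omega\in\mathcal{MC}_{2}$ along the natural equivalence of Proposition~\ref{pro: equivalence of bifunctors}. The unit object is the trivial representation $\mathbf{1}$: the trivial line bundle $M\times\mathbb{R}$ in degree $0$, with $R_{1}(g)=\uid$ and $R_{k}=0$ for $k\geq2$. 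The one non-obvious input about $\omega_{0}$, besides coassociativity, is that the Serre diagonal is \emph{counital}, its counit being precisely the characteristic map $k_{\mathbf{1}}:\Omega\rmap\mathcal{A}_{\mathbf{1}}$, which sends $L_{1}$ to $\uid$ and $L_{n}$ to $0$ for $n\geq2$ (equivalently: in the explicit formula $\omega_{0}=e\boxtimes e+\sum_{k\geq1}(b\boxtimes a+c\boxtimes b)^{\diamond k}$, the only summand surviving $k_{\mathbf{1}}$ in its left tensor slot is $c^{\diamond k}\boxtimes b^{\diamond k}=c^{\diamond k}\boxtimes L_{k+1}$, which yields $R^{E}_{k+1}$).

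\emph{Step 1: the $\omega_{0}$-structure is strict.} Since $\Delta_{\omega_{0}}$ is coassociative, for all $E,F,H$ the characteristic maps of $(E\otimes_{\omega_{0}}F)\otimes_{\omega_{0}}H$ and of $E\otimes_{\omega_{0}}(F\otimes_{\omega_{0}}H)$ coincide (both equal $m_{E,F,H}\circ(k_{E}\boxtimes k_{F}\boxtimes k_{H})$ composed with the common triple diagonal), so these two representations up to homotopy are \emph{equal}; the analogous computation with $k_{\mathbf{1}}$ and counitality gives $\mathbf{1}\otimes_{\omega_{0}}E=E=E\otimes_{\omega_{0}}\mathbf{1}$. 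On morphisms one argues with the universal Maurer--Cartan morphisms: the two iterated tensor products of morphisms $\phi_{i}\colon E_{i}\rmap F_{i}$ are governed by Maurer--Cartan morphisms lying in the same set $\mathcal{MC}_{3}(\omega_{0}^{(3)},\omega_{0}^{(3)})$ (with $\omega_{0}^{(3)}$ the common value of the two triple diagonals), and by Theorem~\ref{theorem tensor of morphisms}(1) applied with $m=3$, together with the length-$3$ analogue of Proposition~\ref{pro: compatibility}, they are homotopic; likewise $\uid_{\mathbf{1}}\otimes_{\omega_{0}}\phi=\phi=\phi\otimes_{\omega_{0}}\uid_{\mathbf{1}}$ in $\mathcal{D}(G)$. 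Combining this with the bifunctoriality of $\otimes_{\omega_{0}}$ on $\mathcal{D}(G)$ (the proposition preceding Proposition~\ref{pro: equivalence of bifunctors}), we conclude that $\bigl(\mathcal{D}(G),\otimes_{\omega_{0}},\mathbf{1}\bigr)$ is a strict monoidal category, so the pentagon and triangle identities are automatic.

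\emph{Step 2: arbitrary $\omega$, and naturality in $\omega$.} Given $\omega\in\mathcal{MC}_{2}$, Proposition~\ref{pro: equivalence of bifunctors} provides a natural isomorphism of bifunctors $\mu\colon\otimes_{\omega_{0}}\Rightarrow\otimes_{\omega}$, $\mu_{E,F}=\uid_{E}\otimes_{y}\uid_{F}$, independent of the choice of $y\in\mathcal{MC}_{2}(\omega_{0},\omega)$. Transporting the strict monoidal structure along $\mu$ — take the associator of $\otimes_{\omega}$ to be the conjugate of the identity associator by the appropriate instances of $\mu$, and the unitors to be the identity unitors of $\otimes_{\omega_{0}}$ precomposed with $\mu^{-1}$ — yields a monoidal structure $\bigl(\mathcal{D}(G),\otimes_{\omega},\mathbf{1}\bigr)$ whose pentagon and triangle follow formally, by naturality of $\mu$ and bifunctoriality, from those of $\otimes_{\omega_{0}}$; moreover $(\uid_{\mathcal{D}(G)},\mu)$ becomes a monoidal equivalence $\bigl(\mathcal{D}(G),\otimes_{\omega_{0}}\bigr)\simeq\bigl(\mathcal{D}(G),\otimes_{\omega}\bigr)$. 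Composing such equivalences, for any $\omega,\omega'\in\mathcal{MC}_{2}$ the monoidal categories $\bigl(\mathcal{D}(G),\otimes_{\omega}\bigr)$ and $\bigl(\mathcal{D}(G),\otimes_{\omega'}\bigr)$ are naturally equivalent.

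\emph{Main obstacle.} The categorical transport of Step 2 is routine; the genuine content lies in the compatibility of tensor products of morphisms at length $3$ used in Step 1 — the statement that composites and iterates of the operations $\otimes_{x}$ are computed, up to homotopy, by the expected Maurer--Cartan morphisms between the iterated universal diagonals. This is the length-$3$ analogue of Proposition~\ref{pro: compatibility} (and, if one preferred to verify the pentagon directly rather than by transport, one would also need the length-$4$ analogue). Its proof is structurally identical to the $m=2$ case: via the universal property of $\mathcal{T}$ and Lemma~\ref{lemma homotopy is homotopy} it reduces to the vanishing of $H^{-k}$ of $(\mathcal{T}\boxtimes\mathcal{T}\boxtimes\mathcal{T})^{k}$, which follows from the K\"unneth formula and $H^{<0}(\mathcal{T})=0$, and then to Propositions~\ref{MC2} and~\ref{MC11} of the appendix. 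The only new ingredient beyond the results already proved is the bookkeeping of three (or four) tensor factors.
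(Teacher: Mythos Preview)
Your approach is correct and takes a genuinely different route from the paper. The paper works uniformly with an arbitrary $\omega$: it constructs the associator directly as a universal Maurer--Cartan morphism in $\mathcal{MC}_{3}(\alpha,\beta)$ between the two iterated diagonals $\alpha=(\Delta_{\omega}\boxtimes\uid)\Delta_{\omega}$ and $\beta=(\uid\boxtimes\Delta_{\omega})\Delta_{\omega}$, and then verifies the pentagon by showing that the five-fold composite is $\otimes_{\sigma(\phi)}(\uid_{E_{1}},\ldots,\uid_{E_{4}})$ for some $\sigma(\phi)\in\mathcal{MC}_{4}(\theta_{1},\theta_{1})$, hence the identity in $\mathcal{D}(G)$ by Theorem~\ref{theorem tensor of morphisms}(6). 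You instead exploit the coassociativity of $\Delta_{\omega_{0}}$ to make the associator the identity, so the pentagon becomes vacuous; all the content migrates to checking strict associativity of $\otimes_{\omega_{0}}$ on \emph{morphisms}, a length-$3$ uniqueness statement, after which you transport. Your route trades a length-$4$ argument for a length-$3$ one plus routine categorical transport; the paper's route is intrinsic to each $\omega$ and produces the associators without reference to $\omega_{0}$.

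Two points worth sharpening. First, the counitality you use is not special to $\omega_{0}$: the paper proves that the projection $\pi\colon\Omega\boxtimes\Omega\to\Omega$ (killing the slot of nonzero $l$-degree) satisfies $\pi\circ\Delta_{\omega}=\uid_{\Omega}$ for \emph{every} $\omega\in\mathcal{MC}_{2}$, by an easy induction on the $L_{k}$. Second, your assertion $\uid_{\mathbf{1}}\otimes_{x}\phi=\phi$ in $\mathcal{D}(G)$ is not an instance of ``two elements of the same $\mathcal{MC}_{m}(\cdot,\cdot)$ are homotopic'': the right-hand side is not universal in the same sense. What is needed is a factorization argument at the level of $\mathcal{T}$ --- the characteristic map $k_{\uid_{\mathbf{1}}}$ factors as $k_{\mathbf{1}}\circ\pi$ with $\pi\colon\mathcal{T}\to\Omega$ the projection used in the proof of Theorem~\ref{theorem tensor of morphisms}(6), so $k_{\uid_{\mathbf{1}}\otimes_{x}\phi}$ factors through a map of $\Omega$-bimodules $(\mathcal{T}_{2})_{\omega_{0},\omega_{0}}\to\mathcal{T}$, and one then compares with $\uid_{\mathcal{T}}$ via Proposition~\ref{MC11}. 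This is exactly parallel to the paper's treatment of the triangle axiom and of Theorem~\ref{theorem tensor of morphisms}(6); it goes through without difficulty, but it is a separate verification and should be stated rather than absorbed into ``likewise''.
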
 
\begin{proof}
In order to prove that this functor gives $\mathcal{D}(G)$ a monoidal structure we need to show that there is a unit for the tensor product and that there are
unitors and associators that satisfy  the pentagon and triangle axiom. The general idea of the proof is simple: one first shows that the associators and unitors are given
by universal maps at the level of the DB-algebra $\Omega$. Then the commutativity of the diagrams translates to the statement that certain maps between powers of $\Omega$ are homotopic, which follows from
the fact that the algebras are contractible.

The unit is given as  usual by the trivial representation. Let us first consider the existence of the unitors. We will prove that 
\[ E \otimes 1\cong E;  1\otimes E\cong E,\]
where the maps, which we denote by $\lambda_r$ and $\lambda_l$ respectively,  are given by the obvious identification of vector bundles. 

We denote by $k_E$ the characteristic map of $E$ and consider the map of $DB$-algebras \[\pi: \Omega \boxtimes \Omega \rightarrow \Omega \] defined on generators by setting:
\begin{eqnarray*}
\pi(a\boxtimes b)=
\begin{cases}
a \text{ if } l(b)=0,\\
0 \text{ otherwise.}
\end{cases}
\end{eqnarray*}

Then, the following diagram commutes:

$$
\xymatrix{
 \Omega \boxtimes \Omega \ar[r]^{k_E \boxtimes k_1}\ar[dr]^{\pi}& \mathcal{A}_E \boxtimes \mathcal{A}_1 \ar[r]^{m_E,1}& \mathcal{A}_{E\otimes 1} \ar[r]^\cong&\mathcal{A}_{E}\\
\Omega  \ar[u]^{\Delta_\omega}\ar[r]^{\gamma} & \Omega \ar[urr]^{k_E} & & &
}
$$

Thus, we see that the characteristic map of $E\otimes_\omega 1$ differs from the characteristic map of $E$ by pre-composing with the map $\gamma :\Omega \rightarrow \Omega$. Also, it is clear that $\gamma(L_1)=L_1$.
We claim that $\gamma$ is the identity. By the universal property of $\Omega$ it is enough to prove that $\gamma(L_k)=L_k$. We will prove this assertion inductively. Assume that the statement is true for
$i<k$.  Then:

\[
 \partial(\gamma(L_{k}))=\gamma(\partial(L_k))=\sum_{j=1}^{k-1}(-1)^{j+1}L_{j}\circ L_{k-j}+\sum_{j=1}^{k-1}(-1)^{j}d_{j}(L_{k-1})=\partial(L_k).
\]

On the other hand, since $\gamma$ preserves both the degree and the order, we know that $\gamma(L_k)$ is a multiple of $L_k$. We conclude that they are equal and therefore $E=E\otimes 1$ under the obvious 
identification. By the same argument we know that $E =1 \otimes E$.

Let us now construct the  associators. The characteristic map $\Delta_{\omega}: \Omega \rightarrow \Omega_2$ can be  used to construct two Maurer-Cartan elements $\alpha, \beta \in \mathcal{MC}_3$ with characteristic maps:
\begin{eqnarray*}
\Delta_\alpha=( \Delta_{\omega} \boxtimes \uid)\circ \Delta_{\omega}:\Omega \rightarrow \Omega_3,\\
\Delta_\beta=(\uid \boxtimes \Delta_{\omega})\circ \Delta_{\omega}: \Omega \rightarrow \Omega_3.
\end{eqnarray*}
Theorem \ref{theorem tensor of morphisms} guaranties that there is universal Maurer-Cartan morphism $u$ from $\alpha$ to $\beta$ and that any
two such are homotopic. For any three representations up to homotopy $E_1, E_2, E_3$ of $G$, the gauge equivalence
$u$ induces an isomorphism
$$\hat{u}: (E_1\otimes_\omega E_2)\otimes_\omega E_3 \rightarrow E_1\otimes_\omega (E_2\otimes_\omega E_3), $$
which we will call the associator of the monoidal structure. Note that since any two choices of $u$ are homotopic,
this map is well defined on $\mathcal{D}(G)$. 

Let us now prove that the units are compatible with the associators. We need to prove the triangle axiom, which is the commutativity of the following diagram in $\mathcal{D}(G)$:

$$
\xymatrix{
(E_1 \otimes_\omega 1) \otimes_\omega E_2 \ar[r] ^u  \ar[d]^{\lambda_r \otimes \uid}& E_1 \otimes_\omega(1 \otimes_\omega E_2)\ar[ld]^{\uid \otimes \lambda_l}\\
E_1 \otimes_\omega E_2&\\
}
$$
By the last part of Theorem \ref{theorem tensor of morphisms} we know that $\uid_{E_1} \otimes \uid_{E_2}$ is homotopic to the identity morphism on $E_1 \otimes E_2$.
Therefore, it suffices to prove that the following commutes:
$$
\xymatrix{
(E_1 \otimes_\omega 1) \otimes_\omega E_2 \ar[r] ^u \ar[d]^{=}& E_1 \otimes_\omega(1 \otimes_\omega E_1)\ar[d]^{=}\\
E_1 \otimes_\omega E_2\ar[r]^{\uid_{E_2} \otimes \uid_{E_2}}& E_1 \otimes_\omega E_2\\
}
$$

Let us consider the characteristic map $\Delta_u: \mathcal{T}\rightarrow \mathcal{T}_{3,\alpha, \beta}$ and the map of $\Omega$-bimodules:
$\mu: \mathcal{T}_{3,\alpha, \beta}\rightarrow \mathcal{T}_{2, \omega,\omega}$ defined on generators by the formula:
\begin{eqnarray*}
\mu(a_1\boxtimes a_2\boxtimes a_3)=
\begin{cases}
a_1 \boxtimes a_3 \text{ if } l(a_2)=0,\\
0 \text{ otherwise.}
\end{cases}
\end{eqnarray*}
We will show that $\mu$ is a map of right modules, the other case follows by a symmetric argument.
For $a \in \Omega_3$ and $T\in \mathcal{T}_3$ one easily checks that:

\[\mu (Ta )=\mu(T)\left((\pi \boxtimes \uid)(a)\right), \]
where $\pi:\Omega_2 \rightarrow \Omega$ is the map defined above, for which we proved that $\pi \circ \Delta_\omega =\uid$. Now we take $F \in \Omega$ and compute:
\[\mu (T  F)=\mu (T(\Delta_\omega \boxtimes \uid)(\Delta_\omega)(F))=\mu (T) (\pi \boxtimes \uid)(\Delta_\omega \boxtimes \uid)(\Delta_\omega)(F))=\mu(T)\Delta_\omega(F)=\mu(T)F. \]
Thus, $\mu$ is indeed a map of $\Omega$ bimodules. We now observe that the characteristic map of the morphism 
\[\hat{u}: (E_1\otimes_\omega 1)\otimes_\omega E_2 \rightarrow E_1\otimes_\omega (1\otimes_\omega E_2)\]
factors through the map $\mu: \mathcal{T}_{3,\alpha,\beta}\rightarrow \mathcal{T}_{2,\omega, \omega}$ which implies that the diagram above commutes. We conclude that
the unit is compatible with the associators.

Let us now prove that these associators satisfy the pentagon axiom. We need to prove that for any four representations up to homotopy $E_1,E_2,E_3,E_4$ 
the following composition is the identity.
$$
\xymatrix{
((E_1 \otimes_\omega E_2) \otimes_\omega E_3) \otimes_\omega E_4\ar[d]^{u\otimes \uid}& (E_1 \otimes_\omega E_2) \otimes_\omega (E_3 \otimes_\omega E_4)\ar[l]^{u^{-1}}\\
(E_1 \otimes_\omega (E_2 \otimes_\omega E_3)) \otimes_\omega E_4\ar[d]^{u}& E_1 \otimes_\omega (E_2 \otimes_\omega ((E_3 \otimes_\omega E_4)) \ar[u]^{u^{-1}}\\
E_1 \otimes_\omega ((E_2 \otimes_\omega E_3) \otimes_\omega E_4)\ar[ur]^{\uid \otimes u}
}
$$

Observe that the five ways of putting brackets in the tensor product correspond to five elements $\theta_i \in \mathcal{MC}_4$ which one
can construct from $\omega$. These elements are given by maps $\Delta_{\theta_i}: \Omega \rightarrow \Omega_4$ defined as follows:
\begin{eqnarray*}
&&\Delta_{\theta_1}=(\Delta_\alpha \boxtimes \uid)\circ \Delta_\omega \text{ corresponds to }((E_1 \otimes_\omega E_2) \otimes_\omega E_3) \otimes_\omega E_4,\\
&&\Delta_{\theta_2}=(\Delta_\beta \boxtimes \uid)\circ \Delta_\omega \text{ corresponds to }(E_1 \otimes_\omega (E_2 \otimes_\omega E_3)) \otimes_\omega E_4,\\
&&\Delta_{\theta_3}=(\uid \boxtimes \Delta_\alpha )\circ \Delta_\omega \text{ corresponds to }E_1 \otimes_\omega ((E_2 \otimes_\omega E_3) \otimes_\omega E_4),\\ 
&&\Delta_{\theta_4}=(\uid \boxtimes \Delta_\beta)\circ \Delta_\omega \text{ corresponds to } E_1 \otimes_\omega (E_2 \otimes_\omega ((E_3 \otimes_\omega E_4)),\\
&&\Delta_{\theta_5}=(\Delta_\omega \boxtimes \Delta_\omega)\circ \Delta_\omega \text{ corresponds to }(E_1 \otimes_\omega E_2) \otimes_\omega (E_3 \otimes_\omega E_4). 
\end{eqnarray*}

Here, as before $\Delta_\alpha=(\Delta_\omega \boxtimes \uid) \circ \Delta_\omega$ and $\Delta_\beta=(\uid \boxtimes \Delta_\omega) \circ \Delta_\omega$.
Now, the morphisms that appear in the pentagon axiom are induced by Maurer-Cartan morphisms between the elements $\theta_i$. In order to write them down
we choose specific characteristic maps $\Delta_x:\mathcal{T}\rightarrow \mathcal{T}_{2,\omega,\omega}$,  $\Delta_u:\mathcal{T}\rightarrow \mathcal{T}_{3,\alpha,\beta}$ and
 $\Delta_{u^{-1}}:\mathcal{T}\rightarrow \mathcal{T}_{3,\beta,\alpha}$ which induce the tensor product of morphisms and the associators in $\mathcal{D}(G)$.
 The Maurer-Cartan morphisms $\phi_i: \theta_i \rightarrow \theta_{i+1}$ for $i=1,\dots , 5 \text{ (mod } 5)$ are determined by maps $\Delta_{\phi_i}:\mathcal{T}\rightarrow \mathcal{T}_{4,\theta_{i},\theta_{i+1}}$
 given by the formulas:
 \begin{eqnarray*}
&&\Delta_{\phi_1}=(\Delta_u \boxtimes \uid) \circ \Delta_x,\\
&&\Delta_{\phi_2}=(\uid \boxtimes \Delta_x \boxtimes \uid)\circ \Delta_u,\\
&&\Delta_{\phi_3}=(\uid \boxtimes \Delta_u)\circ \Delta_x,\\
&&\Delta_{\phi_4}=(\uid \boxtimes \uid \boxtimes \Delta_x)\circ \Delta_{u^{-1}},\\
&&\Delta_{\phi_5}=(\Delta_x \boxtimes \uid \boxtimes \uid) \circ \Delta_{u^{-1}}. 
\end{eqnarray*}
 
Let us consider the composition of Maurer-Cartan morphisms:
 \[\phi:=\phi_5 \circ \phi_4 \circ \phi_3 \circ \phi_2\circ  \phi_1   \in \overline{P}(\theta_1, \theta_1),  \]
 where $P$ is the $\Omega$-bimodule defined by:
\[ 
P:=(\mathcal{T}_{4,\theta_5,\theta_1})\otimes_{\Omega} \mathcal{T}_{4,\theta_4,\theta_5}\otimes_{\Omega} (\mathcal{T}_{4,\theta_3,\theta_4}) \otimes_{\Omega} (\mathcal{T}_{4,\theta_2,\theta 3})\otimes_{\Omega} (\mathcal{T}_{4,\theta_1,\theta_2}).\]

The element $\phi$ is related to the maps in the pentagon as follows. There is a natural map of $\Omega$-bimodules
$\sigma:P \rightarrow \mathcal{T}_{4,\theta_1, \theta_1}$ and this defines a universal Maurer-Cartan morphism $\sigma (\phi)\in \mathcal{MC}_4(\theta_1, \theta_1)$.
The composition of all the maps in the diagram is precisely given by:
\[\otimes_{\sigma(\phi)}(\uid_{E_1},\dots, \uid_{E_4}):((E_1 \otimes_\omega E_2) \otimes_\omega E_3) \otimes_\omega E_4   \rightarrow 
((E_1 \otimes_\omega E_2) \otimes_\omega E_3) \otimes_\omega E_4.\]
By the last part of Theorem \ref{theorem tensor of morphisms} we know that this morphism is homotopic to the identity, and therefore equal to the identity in $\mathcal{D}(G)$.

 We conclude that any $\omega$ defines a monoidal structure on $\mathcal{D}(G)$. In a similar manner one can show
that the natural equivalences defined in Proposition \ref{pro: equivalence of bifunctors} are compatible with the associators and therefore are equivalences of monoidal categories.
\end{proof}

\section{Canonical tensor products on morphisms}

In this section, we point out another universal property of the universal Maurer-Cartan module, a property which reveals relationships 
with Hochschild cohomology and non-commutative differential forms. In particular, using the universal derivation, we show that
any universal Maurer-Cartan element $\omega\in \mathcal{MC}_m$ comes together with a canonical (and explicit) universal Maurer-Cartan
endomorphism
\[ x_{\omega}\in  \mathcal{T}_m(\omega, \omega).\]
As an immediate consequence, once a Maurer-Cartan element $\omega \in \mathcal{MC}_m$ is fixed,  
there is a canonical way of taking tensor products of morphisms. 
This is important when one is forced to work with the category $\Rep^{\infty}(G)$ instead of the homotopy category
(e.g. in the search of infinitesimal models for the cohomology of classifying spaces). Note that, at the level
of $\Rep^{\infty}(G)$, the resulting $\otimes_{\omega}$ is a ``functor up to homotopy''.

We begin with the description of the universal derivation.

\begin{proposition} The $\Omega$-bimodule $\mathcal{T}$ admits a unique biderivation $\delta: \Omega \rightarrow \mathcal{T}$ of bidegree $(0, -1)$, which is compatible with the $d_i$'s and which sends $L_n$ to $T_n$. 

Moreover, $\delta$ does not commute with $\partial$, instead, for $A\in \Omega$:
\[ \delta(\partial(A))+ \partial(\delta(A))= T_0 A- A T_0.\]
\end{proposition}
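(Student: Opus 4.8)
The plan is to obtain existence and uniqueness of $\delta$ from the freeness of $(\Omega,\circ)$, and then to prove the anticommutator identity by reducing both of its sides to one identity between the generators $L_n$ and $T_n$. For uniqueness: after adjoining the unit $1$, $(\Omega,\circ)$ is the free unital associative algebra on the set $\RT$ of short trees (this is exactly how $\circ$ was introduced), so a $\circ$-biderivation $\Omega\rmap\mathcal{T}$ is determined by its values on $\RT$; and since, by the normal form \eqref{Factorization}, every short tree with $r$ branches can be written uniquely as $d_{i_1}\cdots d_{i_m}(L_r)$ with $i_1>\cdots>i_m$, the requirements that $\delta$ commute with the $d_i$'s and send $L_r$ to $T_r$ force $\delta\big(d_{i_1}\cdots d_{i_m}(L_r)\big)=d_{i_1}\cdots d_{i_m}(T_r)$. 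For existence I would \emph{take this as the definition} on $\RT$ — unambiguous because the normal form of \eqref{Factorization} is unique — and extend it to $\Omega$ as the unique $\circ$-biderivation of bidegree $(0,-1)$ with $\delta(1)=0$, using the Koszul rule $\delta(A\circ B)=\delta(A)\circ B+(-1)^{l(A)}A\circ\delta(B)$; the bidegree is $(0,-1)$ because $L_r\in\Omega^r(1-r)$ while $T_r\in\mathcal{T}^r(-r)$. The only point that then needs verification is $\delta d_i=d_i\delta$: on $\RT$ this holds because the $d_i$ on $\Omega$ and on $\mathcal{T}$ obey the \emph{same} relations $d_jd_i=d_id_{j-1}$, so the reordering of a word $d_id_{i_1}\cdots d_{i_m}$ into strictly decreasing form is performed identically on $L_r$ and on $T_r$; and since by \eqref{T-algebras-di} the operator $d_i$ applied to a $\circ$-product is again a $\circ$-product, the identity $\delta d_i=d_i\delta$ propagates from $\RT$ to all of $\Omega$ by induction on the number of $\circ$-factors.

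For the formula, set $D:=\delta\partial+\partial\delta:\Omega\rmap\mathcal{T}$ and $\operatorname{ad}_{T_0}:\Omega\rmap\mathcal{T}$, $A\mapsto T_0\circ A-A\circ T_0$. A short sign check shows that both are $\circ$-biderivations of bidegree $(0,0)$ satisfying the \emph{unsigned} Leibniz rule $E(A\circ B)=E(A)\circ B+A\circ E(B)$ — for $\operatorname{ad}_{T_0}$ this is automatic (it is inner), and for $D$ it holds because the Koszul signs produced by $\delta$ and by $\partial$ cancel in pairs — and that both commute with the $d_i$'s ($\delta$ and $\partial$ do, hence so does $D$; $\operatorname{ad}_{T_0}$ does by \eqref{T-algebras-di} since $k(T_0)=0$). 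By the uniqueness discussion it therefore suffices to prove $D(L_n)=\operatorname{ad}_{T_0}(L_n)$ for all $n\geq1$. Feeding the expansion $\partial(L_n)=\sum_{j=1}^{n-1}(-1)^{j+1}L_j\circ L_{n-j}+\sum_{j=1}^{n-1}(-1)^jd_j(L_{n-1})$ from \S4 into $\delta$, and using the Leibniz rule, $l(L_j)=1-j$ and $\delta\big(d_j(L_{n-1})\big)=d_j(T_{n-1})$, gives
\[ \delta(\partial L_n)=\sum_{j=1}^{n-1}(-1)^{j+1}T_j\circ L_{n-j}+\sum_{j=1}^{n-1}L_j\circ T_{n-j}+\sum_{j=1}^{n-1}(-1)^jd_j(T_{n-1}), \]
while the construction of $\mathcal{T}$ gives
\[ \partial(\delta L_n)=\partial(T_n)=\sum_{i=0}^{n-1}(-1)^iT_i\circ L_{n-i}-\sum_{i=1}^{n}L_i\circ T_{n-i}+\sum_{i=1}^{n-1}(-1)^{i+1}d_i(T_{n-1}). \]
Adding, the $d_j(T_{n-1})$-terms cancel, the terms $T_j\circ L_{n-j}$ and $L_j\circ T_{n-j}$ with $1\leq j\leq n-1$ cancel, and exactly the two boundary contributions of $\partial(T_n)$ survive, namely $+T_0\circ L_n$ (the $i=0$ term) and $-L_n\circ T_0$ (the $i=n$ term); hence $D(L_n)=T_0\circ L_n-L_n\circ T_0=\operatorname{ad}_{T_0}(L_n)$, and so $D=\operatorname{ad}_{T_0}$ on all of $\Omega$.

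I expect the only genuinely delicate step to be this last computation: the signs coming from the Koszul Leibniz rule for $\delta$, from $l(L_j)=1-j$, and from the two ``boundary'' formulas for $\partial(L_n)$ and $\partial(T_n)$ must be marshalled into a single cancellation, and one has to organize it so the telescoping is transparent. The existence and uniqueness parts are essentially formal once one exploits that $(\Omega,\circ)$ is free on $\RT$ and that \eqref{Factorization} supplies unique normal forms. As a sanity check, both sides of the identity vanish on the adjoined unit $1$ (since $\partial(1)=0$, $\delta(1)=0$, $T_0\circ 1=1\circ T_0=T_0$), and on $L_1=e$ they both reduce to $\partial(T_1)=T_0\circ L_1-L_1\circ T_0$ because $\partial(e)=0$.
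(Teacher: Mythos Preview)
Your proof is correct and follows exactly the approach the paper indicates: the paper simply remarks that since $\Omega$ is generated as a DB-algebra by the $L_n$'s the proposition is ``straightforward,'' and you have supplied the details---the freeness/normal-form argument for existence and uniqueness, the observation that both $D=\delta\partial+\partial\delta$ and $\operatorname{ad}_{T_0}$ are unsigned $\circ$-biderivations commuting with the $d_i$'s, and the explicit cancellation on $L_n$. The sign bookkeeping in your computation of $\delta(\partial L_n)+\partial(T_n)$ is correct (the cross terms and the $d_j(T_{n-1})$ terms cancel, leaving $T_0\circ L_n-L_n\circ T_0$), so nothing further is needed.
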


In other words, $\delta: \Omega\rmap \mathcal{T}$
is a linear map which sends elements of bidegree $(k,l)$ into those of bidegree $(k,l-1)$, commutes with the
$d_i$'s and satisfies the derivation condition
\[ \delta(A\circ B)= \delta(A)B+ (-1)^{l(A)}A\circ \delta(B).\]
Since $\Omega$ is generated as a DB-algebra by the $L_n$'s,  the proposition is straightforward. 

\begin{remark} Let us explain how derivations comes into the picture (even before $\mathcal{T}$!),
starting from the notion of morphisms between Maurer-Cartan elements and in what sense $\delta$ is  universal.
Let $\mathcal{A}$ and $\mathcal{B}$ be
two DB-algebras, $\mathcal{E}$ be an $\mathcal{A}$-$\mathcal{B}$-DB-module, 
$\theta\in MC(\bar{\mathcal{A}})$ and $\omega\in MC(\bar{\mathcal{B}})$. Let us try to understand 
elements $x\in \mathcal{E}(\omega, \theta)$ directly in terms of $\Omega$; one would like to re-interpret
the components $x_k$ of $x$ as images of the elements $L_k\in \Omega$ of a certain map
\[ \delta_x: \Omega \rmap \mathcal{E} .\]
The equations that the $x_k$'s must satisfy (involving $\omega$ and $\theta$) indicate that
$\delta_x$ should be required to be a derivation on the bimodule $\mathcal{E}_{\omega, \theta}$.
Adding the extra-condition that $\delta_x$ commutes with the $d_i$'s, $\delta_x$ will be determined uniquely. In turn, 
the fact that $x\in \mathcal{E}(\omega, \theta)$ is
equivalent to the equation 
\[ \delta_x(\partial(A))+ \partial(\delta_x(A))= x_0A- Ax_0\]
for all $A\in \Omega$. Note also that $\delta_x$ does not make use of $x_0$, hence we are really looking at triples
$(\mathcal{E}, \delta, x_0)$ with such properties. Among these,  $(\mathcal{T}, \delta, T_0)$ shows up as the universal 
one. See also Remark \ref{non-commutative} below.
\end{remark}

In order to simplify formulas (and rather intricate signs) we will need some formalism. 
Let us first introduce some terminology.

Consider the category $\underline{\mathbb{VS}}_{B}$ of \textbf{B-vector spaces}, whose objects are collections $V= \{V^k, d_i\}$ consisting of vector spaces $V^{k}$ (one for each integer $k\geq 0$)
and maps $d_i: V^k\rmap V^{k+1}$ for $1\leq i\leq k$ satisfying $d_jd_i= d_i d_{j-1}$ for $i< j$. 
A morphism from $V$ to $W$ consists of families of maps from $V^k$ to $W^k$, commuting with all the operators $d_i$;
such morphisms form the hom-spaces $\textrm{Hom}_{B}(V, W)$. 
As in the case of simplicial vector spaces, one 
can realize $\underline{\mathbb{VS}}_{B}$ as the category of contravariant functors
from a small category $\mathbb{B}$ to the category $\underline{\mathcal{V}}$ of vector spaces.

Associated to $\underline{\mathcal{V}}_{B}$ is the category $\textrm{Gr}(\underline{\mathbb{VS}}_{B})$ of graded objects of $\underline{\mathbb{VS}}_{B}$
and $\textrm{Ch}(\underline{\mathbb{VS}}_{B})$ of cochain complexes in $\underline{\mathbb{VS}}_{B}$. Given  $X$ and $Y$ graded
objects in $\underline{\mathbb{VS}}_{B}$, one defines the graded hom $\underline{\textrm{Hom}}^{*}_{B}(X, Y)$ whose degree $l$-part is
\[ \underline{\textrm{Hom}}^{l}_{B}(X, Y)= \prod_{p} \textrm{Hom}_{B}(X(p), X(p+ l)).\]
When $X$ and $Y$ are complexes in $\underline{\mathbb{VS}}_{B}$, then $\underline{\textrm{Hom}}^{*}_{B}(X, Y)$ has a natural differential:
\[ \partial(f)= \partial\circ f- (-1)^{l(f)} f\circ \partial,\]
where $l(f)$ is the degree of $f$. Note that the internal hom of $\textrm{Ch}(\underline{\mathbb{VS}}_{B})$ is the 
space of zero-cocycles of $\underline{\textrm{Hom}}^{*}_{B}$.

The category $\underline{\mathbb{VS}}_{B}$ comes with a tensor product operation $\otimes$ which makes it into a monoidal category: for $V$ and $W$ in $\underline{\mathbb{VS}}_{B}$,
$V\otimes W$ is defined by
\[ (V\otimes W)^{k}= \bigoplus_{k_1+ k_2= k} V^{k_1}\otimes W^{k_2},\]
with the operators $d_{i}$ given by the formulas:

$$
d_{i}(v\otimes w) =\left\{ \begin{array}{ll}
d_{i}(v)\otimes w& \text{if $i\leq k_1$,}\\
 v\otimes d_{i-k_1}(w)& \text{if $i>k_1$,}\\
\end{array} \right.
$$
where $k_1$ is the degree of $v$. There is an obvious notion of tensor products of morphisms in $\underline{\mathbb{VS}}_{B}$, and the unit is the base field concentrated in degree zero.
This tensor product operation extends to $\textrm{Gr}(\underline{\mathbb{VS}}_{B})$ and $\textrm{Ch}(\underline{\mathbb{VS}}_{B})$  in the standard way: 
\[ (X\otimes Y)(l)= \bigoplus_{l_1+l_2=l} X(l_1)\otimes X(l_2)\]
It also extends to the graded-hom's using the standard sign conventions: 
\[ \otimes:  \underline{\textrm{Hom}}_{B}^l(X, X')\times  \underline{\textrm{Hom}}_{B}^{l'}(Y, Y')\rmap \underline{\textrm{Hom}}_{B}^{l+ l'}(X\otimes Y, X'\otimes Y'), \]
\[ (f\otimes g)(x\otimes y)= (-1)^{l(g) l(x)} f(x)\otimes g(y).\]

Note that a DB-algebra is the same as a monoid in the monoidal category $\textrm{Ch}(\underline{\mathbb{VS}}_{B})$.
With this in mind, there is a B-version of Hochschild cohomology.
Given a DB-algebra $\mathcal{A}$ and an $\mathcal{A}$-bimodule $\mathcal{E}$, we consider the vector spaces
\[ C^{p,l}(\mathcal{A}, \mathcal{E}):= \underline{\textrm{Hom}}^{l}_{B}(\mathcal{A}^{\otimes p},\mathcal{E}),\]
and the space of Hochschild cochains:
\[C^n(\mathcal{A},\mathcal{E})=\bigoplus_{p+l=n}C^{p,l}(\mathcal{A}, \mathcal{E}).\]
We define the differentials by the same formulas as in the case of DG-algebras, but taking into account only the $l$-degree.
More precisely, the horizontal differential 
\begin{equation*}
b: C^{p, l}(\mathcal{A}, \mathcal{E})\rmap C^{p+1, l}(\mathcal{A}, \mathcal{E}),
\end{equation*} 
is given by
\begin{eqnarray*}
 b(c)(a_1, \ldots a_{p+1})&=& (-1)^{l(a_1)l} a_1 c(a_2, \ldots , a_{p+1}) 
+ \sum_{i= 1}^{p} (-1)^i c(a_1, \ldots , a_ia_{i+1}, \ldots, a_{p+1}) \\&&+ (-1)^{p+1} c(a_1, \ldots , a_k) a_{p+1}.
\end{eqnarray*}
The vertical differential 
\begin{equation*}
d_v: C^{p,l}(\mathcal{A}, P)\rmap C^{p, l+1}(\mathcal{A}, P),
\end{equation*}
 is given by
\[ d_v(c)(a_1, \ldots , a_p)= d(c(a_1, \ldots , a_p))- \sum_{i= 1}^{p} (-1)^{\epsilon_i} c(a_1, \ldots , \delta(a_i), \ldots , a_p),\]
where $\epsilon_i= l+ l(a_1) + \ldots + l(a_{i-1})$. 
These two differentials commute and we will denote the resulting total complex by $C^{*}_{B}(\mathcal{A}, \mathcal{E})$.
For any $\zeta\in C^{p,l}_{B}(\mathcal{A}, \mathcal{E})$ we define $\bar{\zeta}\in C^{p,l}(\bar{\mathcal{A}}, \bar{\mathcal{E}})$ by
\[ \bar{\zeta}(a_1, \ldots, a_n)= (-1)^{\epsilon} \zeta(a_1, \ldots , a_n),\ \ \textrm{where}\ \epsilon= \sum_{1\leq i< j\leq n} k(a_i) |a_j|.\]
Note that this expression is well defined even if the $a_i$ are infinite sums, because the map $\zeta$ preserves the $k$ degree.
\begin{lemma} The map 
\[ C^{*}_{B}(\mathcal{A}, \mathcal{E})\rmap C^{*}(\bar{\mathcal{A}}, \bar{\mathcal{E}}),\ \zeta\mapsto \bar{\zeta}\]
is a morphism of cochain complexes.
\end{lemma}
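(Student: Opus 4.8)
The plan is a direct verification. First one checks that $\zeta\mapsto\bar{\zeta}$ is well defined and degree preserving, and then that it intertwines the two total differentials, which I would do by decomposing the Hochschild differential of $C^{*}(\bar{\mathcal{A}},\bar{\mathcal{E}})$ into three contributions and matching each of them against a piece of the differential of $C^{*}_{B}(\mathcal{A},\mathcal{E})$.

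For the preliminary points: since $\zeta$ preserves the $k$-degree, for each fixed $k$ the $k$-component of $\bar{\zeta}(a_{1},\ldots,a_{n})$ receives contributions only from the finitely many decompositions $k=k_{1}+\cdots+k_{n}$, $k_{i}\geq0$; hence $\bar{\zeta}$ extends to a well-defined map $\bar{\mathcal{A}}^{\otimes n}\rmap\bar{\mathcal{E}}$ (as noted in the excerpt). Moreover $\zeta$ preserves $k$ and, as an element of $C^{p,l}_{B}$, raises the $l$-degree by $l$, so $\bar{\zeta}$ raises the total degree by $l$; thus $\zeta$ is sent into the component $C^{p,l}(\bar{\mathcal{A}},\bar{\mathcal{E}})$ of $C^{p+l}(\bar{\mathcal{A}},\bar{\mathcal{E}})$. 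Linearity is clear.

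For the chain-map property, recall that the differential of $C^{*}(\bar{\mathcal{A}},\bar{\mathcal{E}})$ is the Hochschild differential of the DGA $(\bar{\mathcal{A}},d_{\textrm{tot}},\star)$, the sum of a bar part $b_{\star}$ (built from $\star$ and the two one-sided $\bar{\mathcal{A}}$-actions on $\bar{\mathcal{E}}$) and an internal part built from $d_{\textrm{tot}}$. Since $a\star b=(-1)^{k(a)|b|}a\circ b$ and $d_{\textrm{tot}}=\partial+(-1)^{n}d$ with $d=\sum_{i}(-1)^{i}d_{i}$, this refines into three pieces: the bar differential written with the unsigned product $\circ$ (carrying the extra cross-signs produced by $\star$), a $\partial$-piece, and a $d$-piece. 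The identity $\overline{b(\zeta)}=b_{\star}(\bar{\zeta})$ is then a sign count: because $k$ and $|\cdot|$ are additive under $\circ$ and under the module action, merging two adjacent arguments changes the twisting exponent $\epsilon=\sum_{i<j}k(a_{i})|a_{j}|$ precisely by the single cross-term $k(a_{i})|a_{i+1}|$, which is exactly the sign converting $a_{i}\circ a_{i+1}$ into $a_{i}\star a_{i+1}$; the two boundary terms behave identically. Likewise $\overline{d_{v}(\zeta)}$ equals the $\partial$-piece applied to $\bar{\zeta}$: $d_{v}$ is built from $\partial$ by the same recipe as the internal Hochschild differential, $\partial$ preserves $k$, and the residual discrepancy coming from the action of $\partial$ on the degrees $|a_{j}|$ and $|\zeta|$ entering $\epsilon$ is absorbed by the auxiliary signs $\epsilon_{i}=l+l(a_{1})+\cdots+l(a_{i-1})$ appearing in $d_{v}$.

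The one non-formal point is that the $d$-piece of the Hochschild differential annihilates $\bar{\zeta}$. Up to an overall sign, that piece applied to $\bar{\zeta}$ at $(a_{1},\ldots,a_{p})$ is $d\bigl(\bar{\zeta}(a_{1},\ldots,a_{p})\bigr)-\sum_{j}\pm\,\bar{\zeta}(a_{1},\ldots,d(a_{j}),\ldots,a_{p})$ with $d=\sum_{i}(-1)^{i}d_{i}$. But $\zeta$ is a morphism in $\underline{\mathbb{VS}}_{B}$, hence commutes with every $d_{i}$: $\zeta\bigl(d_{i}(a_{1}\otimes\cdots\otimes a_{p})\bigr)=d_{i}\bigl(\zeta(a_{1},\ldots,a_{p})\bigr)$, where on the left $d_{i}$ is the tensor-product $B$-structure that routes $d_{i}$ to the factor $a_{j}$ in whose block of cumulative $k$-degrees $i$ falls. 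Transporting this through the sign twist (here one uses that $d_{i}$ has bidegree $(1,0)$, so it shifts $k(a_{j})$ and $|a_{j}|$, moving $\epsilon$ in a controlled way) and summing over $i$ with weights $(-1)^{i}$ yields $d\bigl(\bar{\zeta}(a_{1},\ldots,a_{p})\bigr)=\sum_{j}\pm\,\bar{\zeta}(a_{1},\ldots,d(a_{j}),\ldots,a_{p})$, so the $d$-piece of $D(\bar{\zeta})$ vanishes term by term. Combining the three matchings gives $D(\bar{\zeta})=\overline{D_{B}(\zeta)}$, which is the claim. The main obstacle is entirely this sign bookkeeping: checking that the twist $\epsilon=\sum_{i<j}k(a_{i})|a_{j}|$ is compatible with replacing $\circ$ by $\star$, with the Koszul signs attached to $\partial$, and with routing a $d_{i}$ of bidegree $(1,0)$ through a tensor product. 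None of it is deep, but it is where the care goes; everything else is formal.
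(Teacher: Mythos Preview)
Your proposal is correct and follows essentially the same approach as the paper: match the horizontal (bar) differentials directly, split the vertical differential on the DGA side into its $\partial$- and $d$-pieces, identify the $\partial$-piece with $d_{v}$, and observe that the $d$-piece annihilates $\bar{\zeta}$ because $\zeta$ commutes with the $d_{i}$'s. You supply more detail on the sign bookkeeping than the paper does, but the structure and the key observation are identical.
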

\begin{proof}
That the horizontal differentials commute is straightforward. For the other direction, recall that the differential
$d_{tot}$ in $\overline{\mathcal{A}}$ is given by:
\begin{equation*}
d_{tot}=\partial + (-1)^n d,
\end{equation*}
where $d$ is the alternating sum of the operators $d_i$. The formula for the vertical differential in  $C^{*}(\bar{\mathcal{A}}, \bar{\mathcal{E}})$ decomposes in two pieces, one corresponding to $\partial$ and one corresponding to $(-1)^nd$. One can easily check that the first part
corresponds to the vertical differential in $C^{*}_{B}(\mathcal{A}, \mathcal{E})$, while the second part vanishes on $\bar{\zeta}$, because
$\zeta$ commutes with the operators $d_i$.
\end{proof}

The derivation $\delta: \Omega \rightarrow \mathcal{T}$ together with the component $T_0$ can now be interpreted
as a canonical Hochschild cochain of degree zero:
\[ \zeta^{\textrm{u}}:= \delta+ T_0 \in C^0(\Omega, \mathcal{T}).\] 
Using cup-product operations, one obtains new cochains in $C^0(\Omega_m, \mathcal{T}_m)$ as follows.
To simplify notations, we consider the case $m= 2$. Consider the two cocycles:
\[ \zeta^{1}= \zeta^{\textrm{u}}\boxtimes \textrm{Id} \in C(\Omega_2, \mathcal{T}\boxtimes \Omega), \]
\[ \zeta^{2}= \textrm{Id}\boxtimes \zeta^{\textrm{u}} \in C(\Omega_2, \Omega \boxtimes \mathcal{T}), \]
where $\textrm{Id}= \textrm{Id}_{\Omega}$ and where the operations
\[ (-)\boxtimes \textrm{Id}: \underline{Hom}_{B}^{*}(\Omega, \mathcal{T})\rmap \underline{Hom}_{B}^{*}(\Omega\boxtimes \Omega, \mathcal{T}\boxtimes \Omega),\]
\[ \textrm{Id} \boxtimes (-): \underline{Hom}_{B}^{*}(\Omega, \mathcal{T})\rmap \underline{Hom}_{B}^{*}(\Omega\boxtimes \Omega, \Omega\boxtimes \mathcal{T}),\]
are defined with the usual sign conventions. Using the composition
\[ \circ: (\mathcal{T}\boxtimes \Omega)\otimes  (\Omega\boxtimes \mathcal{T})\rmap \mathcal{T}\boxtimes \mathcal{T}= \mathcal{T}_2,\]
we now form the cup-product
\[ \zeta:= \zeta^{1}\cup \zeta^{2} \in C(\Omega_2, \mathcal{T}_2).\]
Using the construction from the last part of the appendix, we define
\[ x_{\omega}:= \bar{\zeta}(\omega) .\]
Lemma \ref{remark Hochschild} gives us the following.

\begin{proposition} 
For any $\omega\in \mathcal{MC}_2$, $x_{\omega}$ is an universal Maurer-Cartan morphism from $\omega$ to itself. 
\end{proposition}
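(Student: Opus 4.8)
The plan is to realize $x_{\omega}$ as the value at $\omega$ of a degree-zero Hochschild cocycle and then quote Lemma~\ref{remark Hochschild} of the appendix, which turns such a cocycle, evaluated on a Maurer-Cartan element, into a Maurer-Cartan morphism. Accordingly the argument breaks into three pieces: (1) check that the canonical cochain $\zeta^{\textrm{u}}=\delta+T_{0}$ is a cocycle in the B-Hochschild complex $C^{*}_{B}(\Omega,\mathcal{T})$; (2) propagate this through the $\boxtimes$- and cup-product constructions and the comparison chain map $\zeta\mapsto\bar\zeta$ to get a degree-zero Hochschild cocycle $\bar\zeta\in C^{0}(\bar\Omega_{2},\bar{\mathcal{T}}_{2})$; (3) apply Lemma~\ref{remark Hochschild} and then check the normalization of the degree-zero component.

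For step (1) I would expand the total B-Hochschild differential $b+d_{v}$ on $\zeta^{\textrm{u}}=\delta+T_{0}$ and match terms. The horizontal part $b(\delta)$ vanishes because $\delta:\Omega\to\mathcal{T}$ is a biderivation; the vertical part $d_{v}(T_{0})$ equals $\partial_{\mathcal{T}}(T_{0})$, which is zero since every sum in the formula defining $\partial$ on $\mathcal{T}$ is empty for $n=0$; and the two remaining contributions cancel, because $d_{v}(\delta)$ is, by the displayed identity $\delta(\partial A)+\partial(\delta A)=T_{0}A-AT_{0}$, exactly the negative of the graded-commutator cochain $b(T_{0})$, which sends $A$ to $\pm\bigl(AT_{0}-(-1)^{l(A)}T_{0}A\bigr)$. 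The only content here is sign bookkeeping: the grading shift in the definition of $\mathcal{T}$ and the signs in $\partial|_{\mathcal{T}}$ were set up precisely so that this cancellation occurs, so it is a verification rather than a genuine difficulty. Step (2) is then the formal propagation already recorded in the paragraph before the proposition: $\zeta^{1}=\zeta^{\textrm{u}}\boxtimes\textrm{Id}$ and $\zeta^{2}=\textrm{Id}\boxtimes\zeta^{\textrm{u}}$ are cocycles, hence so is their cup product $\zeta=\zeta^{1}\cup\zeta^{2}$, a degree-zero cocycle in $C^{*}_{B}(\Omega_{2},\mathcal{T}_{2})$; and the Lemma immediately preceding the proposition shows $\zeta\mapsto\bar\zeta$ is a morphism of complexes $C^{*}_{B}(\Omega_{2},\mathcal{T}_{2})\rmap C^{*}(\bar\Omega_{2},\bar{\mathcal{T}}_{2})$, so $\bar\zeta$ is a degree-zero Hochschild cocycle.

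For step (3), since $\omega\in\mathcal{MC}_{2}\subset MC(\bar\Omega_{2})$, Lemma~\ref{remark Hochschild} applies and gives that $x_{\omega}=\bar\zeta(\omega)\in\bar{\mathcal{T}}_{2}(\omega,\omega)$, i.e.\ $x_{\omega}$ is a Maurer-Cartan morphism from $\omega$ to itself. It then remains to identify the degree-zero component $(x_{\omega})_{0}$ with $T_{0}\boxtimes T_{0}$ so that $x_{\omega}$ lies in $\mathcal{MC}_{2}(\omega,\omega)$; because all the cochains involved preserve the $k$-degree and $\omega\equiv e^{\boxtimes 2}$ modulo $F_{2}\bar\Omega_{2}$, the $k$-degree-zero part of $\bar\zeta(\omega)$ is produced only by the constant ($0$-linear) pieces of $\zeta^{1}$ and $\zeta^{2}$, whose product under the composition $(\mathcal{T}\boxtimes\Omega)\otimes(\Omega\boxtimes\mathcal{T})\rmap\mathcal{T}_{2}$ is precisely $T_{0}\boxtimes T_{0}$. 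I expect the main obstacle to be the sign verification in step (1); once $\zeta^{\textrm{u}}$ is known to be a cocycle, everything else is either formal or a direct citation of the appendix.
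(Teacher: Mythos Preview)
Your proposal is correct and follows the same route as the paper: the paper's entire proof is the single sentence ``Lemma~\ref{remark Hochschild} gives us the following,'' so both arguments rest on applying that lemma to the cocycle $\bar\zeta$. You go further than the paper by actually verifying that $\zeta^{\textrm{u}}$ is a cocycle and by checking the degree-zero normalization $(x_{\omega})_{0}=T_{0}\boxtimes T_{0}$, both of which the paper leaves implicit; these additions are welcome, since the text only calls $\zeta^{\textrm{u}}$ a ``cochain'' and asserts without justification that $\zeta^{1},\zeta^{2}$ are cocycles.
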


\begin{remark}\label{non-commutative} Here is one final remark on the structures involved. In this paper we have thought of a representation
up to homotopy as a cochain complex of vector bundles $(E, \partial)$ together with 
the extra-data $\{R_k: k\geq 1\}$; the relevant algebraic structure was that of DB-algebra
and Maurer-Cartan elements with vanishing 0-component.  
One can follow a slightly different route, which has some advantages when it comes to the universal
Maurer-Cartan module: think of a representation up to homotopy as a graded vector bundle together with the 
extra-data $\{R_k: k\geq 0\}$. The relevant algebraic structure is that of B-algebra, which is defined exactly
as that of DB-algebra but giving up on the differential $\partial$ and requiring unitality. In terms of the formalism discussed above,
the resulting category $\underline{\mathcal{B}ar}$ of B-algebras coincides with the category
$\textrm{GrAlg}(\underline{\mathbb{VS}}_{B})$ of (unital) graded algebras associated to the monoidal category 
$\underline{\mathbb{VS}}_{B}$. Then, for a graded vector bundle $E$, $\mathcal{A}_{E}$ is a B-algebra
and representations up to homotopy on $E$ correspond to Maurer-Cartan elements of $\mathcal{A}_{E}$
(with no restriction on the zero-component).  As analogues of $\Omega$ and $\mathcal{T}$, one looks at
\begin{itemize}
\item $\Omega_{\alpha}\in \underline{\mathcal{B}ar}$ together with a Maurer Cartan element
$L_{\alpha}\in \textrm{MC}(\bar{\Omega}_{\alpha})$ which is universal among pairs $(\mathcal{A}, \omega)$ 
consisting of a Maurer Cartan element in a $B$-algebra.
\item $\mathcal{T}_{\alpha}$ which has the same universal property as $\mathcal{T}$, 
but for bimodules over $B$-algebras.
\end{itemize}
It is not surprising that one can explicitly construct $\Omega_{\alpha}$ out of $\Omega$ 
by adjoining to it a unit and a formal element $\alpha$ of bidegree $(0, 1)$:
\[\Omega_{\alpha}^{k}(l)=\Omega^{k}(l)+\alpha \circ \Omega^{k}(l-1),\]
except in bidegrees $(0, 0)$ and $(0, 1)$ where
\[\Omega_{\alpha}^{0}(0)= \mathbb{Q}, \Omega_{\alpha}^{0}(1)= \mathbb{Q}\alpha.\]
One defines the algebra structure on $\Omega_{\alpha}$ by requiring
\[ \alpha^2= 0,\ \ \alpha \circ a- (-1)^{l(a)} a\circ \alpha = \partial(a),\]
while the $d_i$'s are defined by 
\[ d_i(a+ \alpha\circ b)= d_i(a) + \alpha \circ d_i(b).\] 
Finally, one sets $L_{\alpha}= \alpha+ L$.

For $\mathcal{T}_{\alpha}$ the situation is similar but a bit simpler:
\[ \mathcal{T}_{\alpha}= \mathcal{T}+ \alpha\circ \mathcal{T}\]
and the differential $\partial$ of $\mathcal{T}$ is encoded in 
 the bimodule structure of $\mathcal{T}_{\alpha}$:
\[ x\circ \alpha= (-1)^{l(x)}( \alpha \circ x- \partial(x)),\]
for $x\in \mathcal{T}$. 

The analogue $\delta_{\alpha}: \Omega_{\alpha}\rmap \mathcal{T}_{\alpha}$ of the derivation $\delta$
has a nicer universal property: it is universal among all derivations on $\Omega_{\alpha}$-bimodules.
Using the straightforward B-version of Hochschild cohomology and non-commutative forms, 
we see that $\mathcal{T}_{\alpha}$ must be the space of non-commutative 1-forms
associated to the B-algebra $\Omega_{\alpha}$. This also gives another description 
of $\mathcal{T}_{\alpha}$ (and then of $\mathcal{T}$) out of $\Omega_{\alpha}$: 
\[ \mathcal{T}_{\alpha} = \Omega_{\alpha}\otimes \overline{\Omega}_{\alpha},  \ \ \textrm{where}\ \overline{\Omega}_{\alpha}=  \Omega_{\alpha}/1\cdot \mathbb{R},\]
and where a tensor $a\otimes b$ should be interpreted as a non-commutative 1-form $a\delta_{\alpha}(b)$.
Since $\mathcal{T}$ can be recovered as the subspaces of elements which are not of type $\alpha\circ x$, the derivation property of
$\delta_{\alpha}$ shows that $\mathcal{T}$ is spanned by the following types of elements:
\[ A\delta_{\alpha}(T) B,\]
where $A$ is either $1$ or an element of $\Omega$, similarly for $B$, and $T$ is either a tree or $\alpha$. This corresponds 
to our original description of $\mathcal{T}$ in terms of trees and forests. It is interesting to point out that the appearance of $\emptyset$
in that description encodes two types of elements: $1$ (on the forest side) and $\delta(\alpha)$ (on the tree side).
\end{remark}

\appendix

\section{Appendix}

\subsection{Maurer-Cartan elements}

In this appendix we put together some definitions and results that are used in the paper.

We begin with some standard notions regarding Maurer-Cartan elements in Differential Graded Algebras (DGAs).

\begin{enumerate}
\item A \textit{Maurer-Cartan element} in a DGA $(A,d)$ is an element $\gamma\in A$ of degree one satisfying $d(\omega)+\omega^{2}=0$. We denote by $MC(A)$ the set of all Maurer-Cartan elements. 
	
\item A \textit{gauge equivalence} between $\omega$ and $\theta\in MC(A)$ is an invertible element $u\in A$ of degree zero satisfying $u\omega u^{-1}-\theta=(du)u^{-1}$.  
	
\item Given two Maurer-Cartan elements $\theta$ and $\omega$ of two DGAs $(A,d)$ and $(B,d)$, respectively, and a DG $A$-$B$ bimodule $(P,d)$, a \textit{Maurer-Cartan $P$-morphism} from $\omega$ to $\theta$ is an element $x\in P$ of degree zero satisfying $x\omega-\theta x=d(x)$. We denote by $P(\omega,\theta)$ the set of such $P$-morphisms. 

\item With the same notations, we say that $x,y\in P(\omega,\theta)$ are \textit{homotopic} if there exists $h\in P$ of degree $-1$ such that $x-y=dh+h\omega+\theta h$. We denote by $P[\omega,\theta]$ the set of all homotopy classes of $P$-morphisms from $\omega$ to $\theta$. 
\end{enumerate}

Altogether, one obtains a category whose objects are DGAs endowed with a Maurer-Cartan element where the morphisms from $(B,\omega)$ to $(A,\theta)$ are pairs $(P,x)$ consisting of a DG $A$-$B$ bimodule $P$ and an element $x\in P(\omega,\theta)$. If $(Q,y)$ is another morphism from $(C,\eta)$ to $(B,\omega)$, then their composition is defined as 
\[
	(P,x)\circ(Q,y)=(P\otimes_{B}Q,x\otimes y).
\]
It is easy to check that $x\otimes y$ satisfies the required equation and also that this operation is compatible with the notion of homotopy. In particular, one obtains a quotient of this category in which homotopic morphisms become equal.\\

We will now concentrate our attention on complete DGAs and complete DG modules. By a filtered algebra we mean an algebra $A$ together with a filtration 
\[
	\cdots\subset F_{2}A\subset F_{1}A\subset F_{0}A=A,
\]
satisfying 
\[
	F_{p}A\cdot F_{q}A\subset F_{p+q}A.
\]
Note that, in particular, $F_{p}A$ is an ideal in $A$ for all $p$, hence we can consider the quotient algebras, which fit into a tower
\[
	A/F_{1}A\leftarrow A/F_{2}A\leftarrow\cdots.
\]
We denote by $\bar{A}$ the inverse limit of this tower. Note that $\bar{A}$ has a natural filtration, with $F_{p}\bar{A}$ being the inverse limit of
\[
	F_{p}A/F_{p+1}A\leftarrow F_{p}A/F_{p+2}A\leftarrow\cdots.
\]
Moreover, there is a canonical map $c:A\rmap\bar{A}$ that is a map of filtered algebras.

\begin{definition} 
A complete algebra is an algebra $A$ together with a filtration $F_{\bullet}A$, such that the canonical map $c:A\rmap\bar{A}$ is an isomorphism.

A complete DGA is a DGA $(A,d)$ that also has the structure of a complete algebra, such that each space $F_{p}A$ of the filtration is a subcomplex of $(A,d_{A})$. 
\end{definition}

A similar discussion applies to modules over filtered algebras. Given $A$ as above, a filtered left $A$-module $P$ is required to carry a filtration 
\[
	\cdots\subset F_{2}P\subset F_{1}P\subset F_{0}P=P,
\]
satisfying 
\[
	F_{p}A\cdot F_{q}P\subset F_{p+q}P.
\]
The completion $\bar{P}$ of $P$ is the inverse limit of $P/F_{p}P$ (a left $\bar{A}$-module). If $A$ is a complete algebra, we say that $P$ is a complete (left) $A$-module if the canonical map from $P$ to $\bar{P}$ is an isomorphism. If $(A,d)$ is a complete DGA, a complete (left) DG module over $(A,d)$ (or simply $A$-module) is a DG module $(P,d)$ that also has the structure of complete $A$-module such that each $F_{p}P$ is a subcomplex of $(P,d)$. Right modules and bimodules are treated similarly.

In general, for a filtered algebra $A$, $\bar{A}$ is complete and is called the completion of $A$.

\begin{example} \label{ex:filtration} 
If $A=\oplus_{k,l}A^{k}(l)$ is a (differential) bigraded algebra, then it can also be viewed as a (differential) graded algebra with $A=\oplus_{n}A^{n}$, where 
\[
	A^{n}=\oplus_{k+l=n}A^{k}(l).
\]
In this case, $A$ carries a natural filtration with 
\[
	F_{p}A=\oplus_{k\geq p}A^{k}(l).
\]
The resulting completion $\bar{A}$ is given by
\[
	\bar{A}^{n}=\Pi_{k+l=n}A^{k}(l).
\]
\end{example}

\subsection{The case of complete DGAs}

We now consider the existence problem for Maurer-Cartan elements whose class modulo $F_{r}A$ (for some $r\geq1$) is given. Let $\gamma\in A$ be of degree one, and suppose that we look for a Maurer-Cartan element $\omega$ that is equivalent to $\gamma$ modulo $F_{r}A$. This condition forces: 
\begin{equation}
	d\gamma+\gamma^{2}\equiv0\ \textrm{mod}F_{r}A.\label{forced}
\end{equation}
Any $\gamma\in MC(A)$ induces a new differential on $A$: 
\[
	d_{\gamma}(a)=d(a)+[\gamma,a]=d(a)+\gamma a-(-1)^{|a|}a\gamma.
\]
This differential descends to a differential $d_{\gamma}$ on all the quotients $F_{p}A/F_{p+1}A$. Actually, for $d_{\gamma}$ to be a differential on the quotients, one does not need the full Maurer-Cartan condition on $\gamma$ but only (\ref{forced}) for $r=1$. Hence given any $\gamma$ of degree $1$ satisfying (\ref{forced}) for some $r\geq1$, it makes sense to talk about the cohomology of $(F_{p}A/F_{p+1}A,d_{\gamma})$.

\begin{proposition}\label{MC1} 
Let $A$ be a complete DGA. Then for any degree one element $\gamma$ satisfying (\ref{forced}) and
\[
	H^{2}(F_{p}A/F_{p+1}A,d_{\gamma})=0,\ \ \ \forall\ \ p\geq r,
\]
there exists $\omega\in MC(A)$ such that $\omega\equiv\gamma\ \textrm{mod}\ F_{r}A$.
\end{proposition}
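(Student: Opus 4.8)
The plan is to construct $\omega$ by a successive‑approximation argument, correcting $\gamma$ one filtration step at a time. I start with $\gamma_r := \gamma$, which by hypothesis satisfies $d\gamma_r + \gamma_r^2 \equiv 0 \bmod F_r A$. Inductively I will build a sequence $\gamma_r, \gamma_{r+1}, \gamma_{r+2}, \ldots$ of degree‑one elements such that $\gamma_{p+1} \equiv \gamma_p \bmod F_p A$ and $d\gamma_p + \gamma_p^2 \equiv 0 \bmod F_p A$; since $A$ is complete, the sequence converges to a well‑defined element $\omega = \lim_p \gamma_p \in A$, and passing to the limit in the congruences (each element of $F_p A$ for all $p$ is zero, as $A \cong \bar A$) gives $d\omega + \omega^2 = 0$, i.e. $\omega \in MC(A)$. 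Moreover $\omega \equiv \gamma_r = \gamma \bmod F_r A$ by construction, which is the assertion.

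\medskip
\noindent\textbf{The inductive step.} Suppose $\gamma_p$ has been constructed with $d\gamma_p + \gamma_p^2 \equiv 0 \bmod F_p A$ (for $p \geq r$). Write the ``curvature'' $\Phi_p := d\gamma_p + \gamma_p^2 \in F_p A$, of degree two. I look for the correction in the form $\gamma_{p+1} = \gamma_p + c$ with $c \in F_p A$ of degree one; then, working modulo $F_{p+1} A$,
\[
 d\gamma_{p+1} + \gamma_{p+1}^2 \equiv \Phi_p + dc + \gamma_p c + (-1)^{|c|} c\gamma_p \equiv \Phi_p + d_{\gamma}(c) \pmod{F_{p+1} A},
\]
using that $c \in F_p A$, that $F_p A \cdot F_p A \subset F_{2p} A \subset F_{p+1} A$ (as $p \geq r \geq 1$), and that on $F_p A / F_{p+1} A$ the differential induced by $d_{\gamma_p}$ agrees with the one induced by $d_{\gamma}$ (both only use the $r=1$ consequence of \eqref{forced}, which holds since $p\ge r\ge 1$). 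So it suffices to solve $d_{\gamma}(\bar c) = -\bar\Phi_p$ in $F_p A / F_{p+1} A$. For this I must check that $\bar\Phi_p$ is a $d_{\gamma}$‑cocycle in degree two; then the hypothesis $H^2(F_p A/F_{p+1}A, d_{\gamma}) = 0$ (valid since $p \geq r$) furnishes the desired $\bar c$, and I lift it arbitrarily to $c \in F_p A$.

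\medskip
\noindent\textbf{The cocycle check — the main obstacle.} The step that needs genuine care is verifying $d_{\gamma}\bar\Phi_p = 0$ in $F_p A/F_{p+1}A$. This is a Bianchi‑type identity: a direct computation gives $d\Phi_p + [\gamma_p, \Phi_p] = d(d\gamma_p + \gamma_p^2) + \gamma_p\Phi_p - \Phi_p\gamma_p = 0$ exactly (using $d^2 = 0$, the Leibniz rule, and associativity — the $\gamma_p d\gamma_p$ and $d\gamma_p\,\gamma_p$ terms cancel against the commutator terms, and the $\gamma_p^2$‑with‑itself terms vanish by associativity after signs), so $d_{\gamma_p}\Phi_p = 0$ on the nose in $A$. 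Reducing modulo $F_{p+1}A$ and replacing $d_{\gamma_p}$ by $d_{\gamma}$ on the quotient (legitimate as noted above, since the difference $[\gamma - \gamma_p, -]$ lands in $F_{p+1}A$ when applied to $F_pA$, because $\gamma \equiv \gamma_p \bmod F_1 A$ is not quite enough — one actually needs $\gamma_p \equiv \gamma \bmod F_r A$, which does hold by the telescoping of the induction) yields $d_{\gamma}\bar\Phi_p = 0$. I expect the only subtlety to be bookkeeping the signs in the Bianchi identity and confirming that $\gamma_p \equiv \gamma \bmod F_r A$ is preserved along the induction (it is: each correction lies in $F_p A \subset F_r A$); once that is pinned down, the rest is the standard obstruction‑theory mechanism and the convergence is immediate from completeness.
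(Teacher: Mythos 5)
Your proposal is correct and follows essentially the same successive-approximation argument as the paper: the same inductive correction $\gamma_{p+1}=\gamma_p+c$ with $c\in F_pA$, the same Bianchi identity $d\Phi_p+[\gamma_p,\Phi_p]=0$ to verify the cocycle condition, and the same appeal to completeness to pass to the limit. (One small remark: the parenthetical worry in your last paragraph is unfounded --- $\gamma-\gamma_p\in F_1A$ already guarantees that $[\gamma-\gamma_p,\,\cdot\,]$ maps $F_pA$ into $F_{p+1}A$, which is exactly the justification the paper gives for replacing $d_{\gamma_p}$ by $d_{\gamma}$ on the quotient.)
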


\begin{proof} 
We will inductively construct  $\omega_{r},\omega_{r+1},\ldots$ with the property that 
\[
	d\omega_{k}+\omega_{k}^{2}=0,\;\mod F_{k}A,
\]
in such a way that 
\[
	\omega_{k}=\omega_{k-1}\ \textrm{mod}\ F_{k-1}A,\ \forall\ k\geq r+1,\ \omega_{r}=\gamma.
\]
Assuming that $\omega_{k}$ has been constructed, we are now looking for $a\in F_{k}A$ such that 
\[
	\omega_{k+1}:=\omega_{k}+a
\]
satisfies the Maurer-Cartan equation modulo $F_{k+1}A$. Writing out the equation and using that $a^{2}\in F_{k+1}A$, the equation to solve is 
\[
	-d_{\omega_{k}}(a)=(d\omega_{k}+\omega_{k}^{2})\ \textrm{mod}\ F_{k+1}A.
\]
This can be seen as an equation in $F_{k}A/F_{k+1}A$. Moreover, on the quotient, $d_{\omega_{k}}=d_{\gamma}$ because $\omega_{k}-\gamma\in F_{1}(A)$. Hence, due to the cohomological condition in the statement, we only have have to check that the right hand side of the last equation is closed for $d_{\omega_{k}}$. But its differential (modulo $F_{k+1}A$) is 
\[
	d(\omega_{k}^{2}+d\omega_{k})+\omega_{k}(\omega_{k}^{2}+d\omega_{k})-(\omega_{k}^{2}+d\omega_{k})\omega_{k}=0.
\]
In conclusion, we obtain the desired sequence $(\omega_{k})_{k\geq r}$. Due to completeness of $A$, we obtain an element $\gamma\in A$ such that $\omega=\omega_{k}\ \textrm{mod}F_{k}A$ for all $k\geq r$. Since the Maurer-Cartan expression in $\omega$ is congruent, modulo
$F_{k}A$, to the one of $\omega_{k}$, hence to zero, we deduce (again from the completeness of $A$) that $\omega\in MC(A)$. By construction, $\omega=\gamma\ \textrm{mod}F_{r}A$. 
\end{proof}

There is an analogous result for Maurer-Cartan morphisms. Given two Maurer-Cartan elements $\omega$ and $\theta$ of two complete DGAs $(A,d)$ and $(B,d)$, respectively, and let $(P,d)$ be a a complete DG-$A$-$B$ bimodule. Then the differential $d$ of $P$ can be twisted by $\omega$ and $\theta$ to define a new differential:
\[
	d_{\omega,\theta}(x)=d(x)+\theta x-(-1)^{|x|}x\omega.
\]
The following is proven exactly as the previous result.

\begin{proposition}\label{MC11} 
Let $r\geq1$, and assume that 
\[
	H^{1}(F_{p}P/F_{p+1}P,d_{\omega,\theta})=0,\ \ \ \forall\ \ p\geq r.
\]
Then for any $x\in P$ satisfying 
\[
	x\omega-\theta x=dx\ \textrm{mod}\ F_{r}P,
\]
one can find $y\in P(\omega,\theta)$ such that $y=x\ \textrm{mod}\ F_{r}P$. Moreover, if the same cohomological condition holds in degree zero, then any two such $y$'s are homotopic. 
\end{proposition}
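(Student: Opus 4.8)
The plan is to follow verbatim the strategy of the proof of Proposition \ref{MC1}, replacing the twisted differential $d_\gamma$ on $A$ by the twisted differential
\[
 d_{\omega,\theta}(x)=d(x)+\theta x-(-1)^{|x|}x\omega
\]
on $P$. Since $\omega$ and $\theta$ are Maurer-Cartan elements, $d_{\omega,\theta}$ squares to zero, it preserves the filtration $F_\bullet P$, and in this notation $x\in P(\omega,\theta)$ means precisely $d_{\omega,\theta}(x)=0$ (for $|x|=0$), while $x,y\in P(\omega,\theta)$ are homotopic exactly when $x-y=d_{\omega,\theta}(h)$ for some $h\in P$ of degree $-1$. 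Thus both assertions are homological statements about the complex $(P,d_{\omega,\theta})$ filtered by $F_\bullet P$, and they will be obtained by a successive-approximation argument exploiting the completeness of $P$.

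For the existence part I would construct inductively degree-$0$ elements $y_r=x,\,y_{r+1},\,y_{r+2},\ldots$ in $P$ with
\[
 d_{\omega,\theta}(y_k)\equiv 0 \ \ \mathrm{mod}\ F_kP,\qquad y_{k+1}\equiv y_k\ \ \mathrm{mod}\ F_kP.
\]
The base case $y_r=x$ is the hypothesis. Given $y_k$, I look for $z\in F_kP$ of degree $0$ and set $y_{k+1}:=y_k+z$; as $P$ is a module there is no quadratic correction term, so the equation to solve reduces to
\[
 -d_{\omega,\theta}(z)\equiv d_{\omega,\theta}(y_k)\ \ \mathrm{mod}\ F_{k+1}P,
\]
an equation in $F_kP/F_{k+1}P$, where $d_{\omega,\theta}$ may be replaced by the induced differential on the quotient (note that $\omega$ and $\theta$ enter this quotient differential only through their classes modulo $F_1$, so it is the ``same'' twist for every $y_k$). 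The right-hand side is a $d_{\omega,\theta}$-cocycle of degree $1$ because $d_{\omega,\theta}(d_{\omega,\theta}(y_k))=0$; hence by the vanishing of $H^{1}(F_kP/F_{k+1}P,d_{\omega,\theta})$ for $k\geq r$ such a $z$ exists. Completeness of $P$ then yields $y:=\lim_k y_k$ with $y\equiv y_k\bmod F_kP$ for all $k$, so $d_{\omega,\theta}(y)$ lies in every $F_kP$ and therefore vanishes, i.e.\ $y\in P(\omega,\theta)$; and $y\equiv x\bmod F_rP$ by construction.

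For the ``moreover'' part, assume in addition that $H^{0}(F_pP/F_{p+1}P,d_{\omega,\theta})=0$ for $p\geq r$, and let $y,y'\in P(\omega,\theta)$ both be congruent to $x$ modulo $F_rP$. Then $y-y'\in F_rP$ is a degree-$0$ $d_{\omega,\theta}$-cocycle, and I would exhibit it as a coboundary by the telescoping argument dual to the one above: build degree-$(-1)$ elements $h_r,h_{r+1},\ldots$ with $d_{\omega,\theta}(h_k)\equiv y-y'\bmod F_kP$ and $h_{k+1}\equiv h_k\bmod F_kP$, the correction at each stage being possible because $y-y'-d_{\omega,\theta}(h_k)$ is a degree-$0$ cocycle lying in $F_{k+1}P$ and $H^{0}(F_{k+1}P/F_{k+2}P,d_{\omega,\theta})=0$. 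Passing to the limit gives $h$ with $y-y'=d_{\omega,\theta}(h)$, i.e.\ a homotopy between $y$ and $y'$.

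The only genuine point to be careful about, exactly as in Proposition \ref{MC1}, is the verification that the obstruction class at each stage is closed for the quotient differential; here this is immediate from $d_{\omega,\theta}^{2}=0$, which in turn uses the full Maurer-Cartan equations for both $\omega$ and $\theta$. Beyond that, the argument is bookkeeping of signs and of filtration degrees, and the absence of a quadratic term (compared with the algebra case of Proposition \ref{MC1}) makes the module case if anything slightly simpler.
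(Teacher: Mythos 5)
Your proof is correct and is exactly what the paper intends: the paper's own proof of Proposition \ref{MC11} consists of the single remark that it is ``proven exactly as the previous result,'' and your successive-approximation argument (now linear in the unknown, with no quadratic correction term, and with $d_{\omega,\theta}^2=0$ following from the Maurer--Cartan equations for both $\omega$ and $\theta$) is precisely that adaptation of the proof of Proposition \ref{MC1}. The only blemish is a harmless off-by-one in the filtration indices in the homotopy part --- the obstruction $y-y'-d_{\omega,\theta}(h_k)$ lies in $F_kP$, so the step uses $H^{0}(F_kP/F_{k+1}P,d_{\omega,\theta})=0$ --- which does not affect the argument.
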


\textbf{Gauge equivalence in complete DGAs:} 
In the context of complete DGAs, there is a refined notion of gauge transformation that we now explain. We associate a group $G_{1}(A)$ to a complete DGA $A$ as follows
\[
	G_{1}(A)=\{x\in A^{0}:\ x\equiv1\ \textrm{mod}\ F_{1}A\}=1+(F_{1}A)^{0}.
\]
One can see that $G_{1}(A)$ is a group with respect to the multiplication in $A$ from the power series expression 
\[
	(1-\alpha)^{-1}=\sum_{k\geq0}\alpha^{k},
\]
where, for $\alpha\in F_{1}A$, completeness implies that the right hand side makes sense as an element of $A$ . 

Note that, strictly speaking, the definition of $G_{1}(A)$ requires $A$ to be unital. However, the role of the elements {}``$1$'' is purely formal. In other words, $G_{1}(A)$ makes sense even without the unitality condition. Equivalently, taking this as a definition for unital $A$'s, for a general $A$, one can replace $A$ by the new (complete) DGA $A^{+}$ obtained by adding a unit to $A$. Define $G_{1}(A)$ as $G_{1}(A^{+})$. In the case that $A$ already has a unit $1_{A}$, the map $1_{A}+x\mapsto1+x$ identifies the two definitions.

\begin{definition}\label{strong-appendix} 
Given a complete DGA $A$, a gauge equivalence $u$ between two Maurer-Cartan elements $\omega$ and $\theta$ of $A$ is called strong if $u\in G_{1}(A)$. If such a $u$ exists, we say that $\omega$ and $\theta$ are strongly gauge equivalent. 
\end{definition}

\begin{proposition}\label{MC2} 
Let $A$ be a complete DGA and $\omega,\theta\in MC(A)$, such that 
\[
	\omega=\theta\ \textrm{mod}\ F_{r}A,
\]
with $r\geq1$. If 
\[
	H^{1}(F_{p}A/F_{p+1}A,d_{\omega})=0,\ \ \ \forall\ \ p\geq r,
\]
then $\omega$ and $\theta$ are strongly gauge equivalent. 
\end{proposition}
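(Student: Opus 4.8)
The plan is to construct the strong gauge equivalence by successive approximation, in the same spirit as the proof of Proposition \ref{MC1}, but now tracking a sequence of gauge transformations rather than Maurer--Cartan elements. For a degree-zero $u$ with $u\equiv 1\bmod F_1 A$ write $u\cdot\omega:=u\omega u^{-1}-(du)u^{-1}$ for the gauged element; the standard computation shows $u\cdot\omega\in MC(A)$ and that $(uv)\cdot\omega=u\cdot(v\cdot\omega)$. I would build inductively $u_k\in G_1(A)$, for $k\geq r$, with
\[ u_k\cdot\omega\equiv\theta\ \ \bmod F_k A,\qquad u_{k+1}\equiv u_k\ \ \bmod F_k A,\]
starting from $u_r=1$, which works modulo $F_r A$ precisely because $\omega\equiv\theta\bmod F_r A$. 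Completeness of $A$ then yields $u=\lim_k u_k\in G_1(A)$ with $u\cdot\omega=\theta$, i.e.\ the desired strong gauge equivalence.

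For the inductive step, suppose $u_k$ has been built and put $\theta_k:=u_k\cdot\omega\in MC(A)$, so $\theta_k-\theta\in F_k A$ and $\theta_k-\omega\in F_1 A$. I look for the next transformation in the form $u_{k+1}=(1+\xi)u_k$ with $\xi\in F_k A$ of degree zero. A routine expansion, using $\xi^2,\ \xi\cdot F_1 A\subseteq F_{k+1}A$ and the fact that $d$ preserves the filtration, gives $u_{k+1}\cdot\omega=(1+\xi)\cdot\theta_k\equiv\theta_k-d_{\theta_k}(\xi)\bmod F_{k+1}A$; and since $\theta_k-\omega\in F_1 A$, the twisted differential $d_{\theta_k}$ agrees with $d_\omega$ on the associated graded $F_k A/F_{k+1}A$. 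Hence the requirement $u_{k+1}\cdot\omega\equiv\theta\bmod F_{k+1}A$ reduces to the linear equation
\[ d_\omega(\xi)\equiv\theta_k-\theta\ \ \bmod F_{k+1}A \]
in $F_k A/F_{k+1}A$, in degree one.

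It remains to solve this, and here the cohomological hypothesis enters. First, $\theta_k-\theta$ is $d_\omega$-closed modulo $F_{k+1}A$: writing $\theta=\omega+\beta$ and $\theta_k=\omega+\beta_k$ with $\beta,\beta_k\in F_r A$, the Maurer--Cartan equations give $d_\omega(\beta)=-\beta^2$ and $d_\omega(\beta_k)=-\beta_k^2$, so $d_\omega(\theta_k-\theta)=\beta^2-\beta_k^2$; and since $\beta_k\equiv\beta\bmod F_k A$, this lies in $F_r A\cdot F_k A+F_k A\cdot F_r A\subseteq F_{k+r}A\subseteq F_{k+1}A$ because $r\geq 1$. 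As $H^1(F_k A/F_{k+1}A,d_\omega)=0$ for $k\geq r$, the class of $\theta_k-\theta$ is a $d_\omega$-coboundary, so a degree-zero lift $\xi\in F_k A$ solving the displayed equation exists; then $u_{k+1}:=(1+\xi)u_k\in G_1(A)$ meets the inductive hypothesis at level $k+1$. (If $A$ is non-unital one works throughout in $A^{+}$, as in the definition of $G_1(A)$.)

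The only genuinely non-formal point is the verification that $\theta_k-\theta$ is $d_\omega$-closed modulo $F_{k+1}A$ (the computation in the preceding paragraph); the convergence of the $u_k$, the identification of the gauge equation with a cohomological equation, and the passage from $d_{\theta_k}$ to $d_\omega$ on the associated graded are exactly the bookkeeping already carried out for Proposition \ref{MC1}, with signs that are routine to check.
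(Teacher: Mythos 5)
Your proof is correct, and it reaches the same obstruction ($H^{1}$ of the associated graded with the twisted differential $d_\omega$) by essentially the same successive-approximation scheme, but the bookkeeping is genuinely different from the paper's. The paper never works with the nonlinear gauge action at all: it directly solves the \emph{linear} intertwining equation $u\omega-\theta u=du$ by additive corrections $u_{k+1}=u_k+x$ with $x\in F_kA$, so the equation on $F_kA/F_{k+1}A$ is immediately of the form $d_{\omega}x = y$ with $y=-du_k+u_k\omega-\theta u_k$, and closedness of $y$ is checked by one application of the bimodule differential $d(y)+\theta y+y\omega$. You instead keep the full nonlinear action $u\cdot\omega=u\omega u^{-1}-(du)u^{-1}$, use multiplicative corrections $u_{k+1}=(1+\xi)u_k$, and verify closedness of $\theta_k-\theta$ via the Maurer--Cartan equations for $\beta=\theta-\omega$ and $\beta_k=\theta_k-\omega$ (the identity $d_\omega(\beta)=-\beta^2$). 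Your route buys a statement that is manifestly about the gauge orbit --- the limit $u$ satisfies $u\cdot\omega=\theta$ by construction --- at the cost of having to expand $(1+\xi)^{-1}$ and track that the quadratic terms land in $F_{k+1}A$; the paper's route is linear from the outset and only at the very end uses that $u\in G_1(A)$ is invertible to convert $u\omega-\theta u=du$ into a gauge equivalence. One small point worth making explicit in your write-up: you quietly use that $\beta_k=\theta_k-\omega$ lies in $F_rA$ (or at least $F_1A$) so that $\beta^2-\beta_k^2\in F_{k+1}A$; this follows by induction since $\theta_r=\omega$ and each step changes $\theta_k$ only by an element of $F_kA$, but it deserves a sentence.
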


\begin{proof} 
This  proof is very similar to the one of Proposition \ref{MC1}. We construct inductively a sequence $u_{r},u_{r+1},\ldots$ of degree zero elements of $A$ with the property that 
\[
 u_{k}\omega-\theta u_{k}=du_{k}\ \textrm{mod}\ F_{k}A
\]
for all $k\geq r$. Moreover, the sequence will be constructed so that 
\[
 u_{k}=u_{k-1}\ \textrm{mod}\ F_{k-1}A\ \forall\ k\geq r+1,\ \ u_{r}=1.
\]
Assuming that $u_{k}$ has been constructed, we are looking for $x\in F_{k}A$ such that 
\[
 (u_{k}+x)\omega-\theta(u_{k}+x)=d(u_{k}+x)\ \textrm{mod}\ F_{k+1}A.
\]
Note that, since $\omega-\theta\in F_{r}A\subset F_{1}A$, 
\[
 \theta x-x\omega=\omega x-x\omega\ \textrm{mod}\ F_{k+1}
\]
whenever $x\in F_{k}A$. We see that the previous equation can be written as an equation in $F_{k}A/F_{k+1}A$: 
\[
 d_{\omega}x=-du_{k}+u_{k}\omega-\theta u_{k}\ \textrm{mod}\ F_{k+1}A.
\]
Because of the hypothesis, it suffices to show that the right hand side is closed in the quotient. Denoting the right hand side by $y$, and using that 
\[
 d_{\omega}(y)=d(y)+\theta y+y\omega,
\]
the desired equation follows immediately.

With the sequence $u_{r},u_{r+1},\ldots$ constructed, one uses again the completeness of $A$ to obtain $u\in A$ of degree zero such that $u=1\ \textrm{mod}F_{r}A$, $u\omega-\theta u=du$.
\end{proof}

\begin{remark} 
The gauge equivalence comes from an action of $G_{1}(A)$ on $MC(A)$, given by the usual gauge formula:
\[
  u\cdot\omega=u\omega u^{-1}-du\cdot u^{-1}\ \ u\in G_{1}(A),\omega\in MC(A).
\]
Our discussion has an infinitesimal counterpart. First of all, ``the Lie algebra of $G_{1}(A)$'' is defined as
\[
 \mathfrak{g}_{1}(A):=\{\alpha\in A^{0}:\alpha\equiv0\ \textrm{mod}\ F_{1}A\},
\]
with the commutator bracket 
\[
 [\alpha,\beta]=\alpha\beta-\beta\alpha.
\]
The exponential map 
\[
 exp:\mathfrak{g}_{1}(A)\rmap G_{1}(A)
\]
is defined by the usual power series 
\[
 exp(\alpha)=\sum_{k\geq0}\frac{1}{k!}\alpha^{k}.
\]
The completeness of $A$ make sense of $exp(\alpha)$ for $\alpha\in F_{1}A$. The action of $G_{1}(A)$ on $MC(A)$ has an infinitesimal counterpart: an action of the Lie algebra $\mathfrak{g}_{1}(A)$ on $MC(A)$, which is familiar in the discussion of Maurer-Cartan elements in differential graded Lie algebras: 
\[
 \alpha\cdot\gamma=[\alpha,\gamma]+d\alpha,\ \ \alpha\in\mathfrak{g}_{1}(A),\gamma\in MC(A).
\]
\end{remark}

\subsection{Relation with Hochschild cohomology}\label{Hochschild}

Let $A$ be a DGA and let $P$ be a $A$-bimodule. Here we explain that, given a Maurer-Cartan element $\omega$ of $A$, one can associate a Maurer-Cartan morphism to a degree zero $P$-valued Hochschild cocycle on $A$. In low degrees, the idea is very simple: by applying a biderivation $D:A\rmap P$ to the Maurer-Cartan equation for a Maurer-Cartan element $\omega$, one obtains an element $D(\omega)\in P(\omega,\omega)$. We now discuss what happens for general degree zero Hochschild cocycles. For each $k$ and $l$, we denote by $C^{k,l}(A,P)$ the space of all linear maps 
\[
 c:\underbrace{A\otimes\ldots\otimes A}_{k\ \textrm{times}}\rmap P
\]
which raises the total degree by $l$. The horizontal differential
\[
 b:C^{k,l}(A,P)\rmap C^{k+1,l}(A,P)
\]
is given by 
\begin{eqnarray*}
 b(c)(a_{1},\ldots a_{k+1}) & = & (-1)^{|a_{1}|l}a_{1}c(a_{2},\ldots,a_{k+1})
          +\sum_{i=1}^{k}(-1)^{i}c(a_{1},\ldots,a_{i}a_{i+1},\ldots,a_{k+1})\\
      &  & +(-1)^{k+1}c(a_{1},\ldots,a_{k})a_{k+1}.
\end{eqnarray*}
The vertical differential 
\[
 d_{v}:C^{k,l}(A,P)\rmap C^{k,l+1}(A,P)
\]
is given by 
\[
 d_{v}(c)(a_{1},\ldots,a_{k})=d(c(a_{1},\ldots,a_{k}))-\sum_{i=1}^{k}(-1)^{\epsilon_{i}}c(a_{1},\ldots,d(a_{i}),\ldots,a_{k}),
\]
where $\epsilon_{i}=l+|a_{1}|+\ldots+|a_{i-1}|$. These two differentials commute, and we obtain a complex $C^{n}(A,P)=\bigoplus_{k+l=n}C^{k,l}(A,P)$ with  $D=d_{v}+(-1)^{l}b$ as total differential. We are interested in $0$-cocycles. Such a cocycle is a finite sum 
\begin{equation}
 \zeta=\zeta_{0}+\zeta_{1}+\ldots,\ \ \ \textrm{with}\ \ \zeta_{k}\in C^{k,-k}(A,P),\label{zeta-H}
\end{equation}
satisfying 
\[
 b(\zeta_{i})+(-1)^{i}d(\zeta_{i+1})=0.
\]
For any such $\zeta$, we consider the induced polynomial function
\[
 \hat{\zeta}:A^{1}\rmap P^{0},\ \ \zeta(a)=\zeta_{0}+\zeta_{1}(a)+\zeta_{2}(a,a)+\ldots\ \ .
\]
The following is straightforward:
 
\begin{lemma}\label{remark Hochschild}
For any Hochschild cocycle $\zeta\in C^{0}(A,P)$ and any $\omega\in MC(A)$, $\hat{\zeta}(\omega)\in P(\omega,\omega)$. Moreover:
\begin{itemize}
\item If $\zeta$ and $\zeta'$ are cohomologous, then $\zeta(\omega)$ and $\zeta'(\omega)$ are homotopic.
\item This construction is compatible with cup-products. 
\end{itemize}
\end{lemma}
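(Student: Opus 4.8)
The plan is to reduce all three assertions to a single ``master identity'' relating the Hochschild differential $D=d_v+(-1)^{l}b$ to the $\omega$-twisted differential on $P$. Recall that for $\omega\in MC(A)$ one has the twisted differential $d_{\omega}(p)=d(p)+\omega p-(-1)^{|p|}p\omega$ on $P$; unwinding the conventions of the appendix, a degree-zero $x\in P$ lies in $P(\omega,\omega)$ exactly when $d_{\omega}(x)=0$, and two such $x,y$ are homotopic exactly when $x-y=d_{\omega}(h)$ for some $h\in P^{-1}$. Writing $\hat{\mu}(\omega)=\sum_{k}\mu_{k}(\omega,\dots,\omega)$ for any (finite-sum) Hochschild cochain $\mu=\sum_{k}\mu_{k}$, the claim I would establish first is: for any homogeneous $\zeta\in C^{n}(A,P)$ and any $\omega\in MC(A)$,
\[
 \widehat{D\zeta}(\omega)=\epsilon_{n}\, d_{\omega}\big(\hat{\zeta}(\omega)\big),
\]
with a sign $\epsilon_{n}=\pm1$ depending only on $n$. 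Note that $D\zeta$ is again homogeneous, so the left-hand side makes sense.

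To prove this identity I would simply differentiate $\hat{\zeta}(\omega)=\sum_{k}\zeta_{k}(\omega,\dots,\omega)$ termwise. Expanding $d\big(\zeta_{k}(\omega,\dots,\omega)\big)$ by the definition of the vertical differential produces the term $d_{v}\zeta_{k}(\omega,\dots,\omega)$ together with $k$ terms of the form $\pm\zeta_{k}(\omega,\dots,d\omega,\dots,\omega)$; the Maurer--Cartan equation $d\omega=-\omega^{2}$ turns these into ``internal multiplication'' terms $\mp\zeta_{k}(\omega,\dots,\omega\omega,\dots,\omega)$. On the other hand, $b\zeta_{k-1}$ evaluated at $(\omega,\dots,\omega)$ is, by the explicit formula for $b$, exactly $\pm\omega\,\zeta_{k-1}(\omega,\dots,\omega)\pm\zeta_{k-1}(\omega,\dots,\omega)\,\omega$ plus the same internal-multiplication terms. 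Substituting the cocycle-type relation $d_{v}\zeta_{k}=\pm b\zeta_{k-1}$ (for a general cochain, the defining equation of $D\zeta$), one checks that the internal-multiplication terms coming from level $k$ and from level $k-1$ cancel in pairs when summed over $k$, leaving precisely $d\hat{\zeta}(\omega)+\omega\hat{\zeta}(\omega)-(-1)^{n}\hat{\zeta}(\omega)\omega=d_{\omega}(\hat{\zeta}(\omega))$. The only genuine work here is tracking the Koszul signs produced by the $l$-grading; I expect this to be the main obstacle, but it is entirely mechanical given the explicit formulas for $b$ and $d_{v}$.

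Granting the master identity, the three assertions follow at once. If $\zeta\in C^{0}(A,P)$ is a cocycle then $D\zeta=0$, hence $d_{\omega}(\hat{\zeta}(\omega))=0$, hence $\hat{\zeta}(\omega)\in P(\omega,\omega)$. If $\zeta$ and $\zeta'$ are cohomologous, say $\zeta'-\zeta=D\eta$ with $\eta\in C^{-1}(A,P)$, then applying $\widehat{(\cdot)}(\omega)$ and the identity in degree $-1$ gives $\hat{\zeta'}(\omega)-\hat{\zeta}(\omega)=\widehat{D\eta}(\omega)=\epsilon_{-1}\,d_{\omega}\big(\hat{\eta}(\omega)\big)$, so $h:=\epsilon_{-1}\hat{\eta}(\omega)\in P^{-1}$ realizes the required homotopy between the Maurer--Cartan morphisms $\hat{\zeta}(\omega)$ and $\hat{\zeta'}(\omega)$. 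Finally, compatibility with cup products needs no master identity: for $\zeta\in C^{0}(A,P)$, $\zeta'\in C^{0}(A,P')$ and a pairing $P\otimes P'\to P''$, the cup product is $(\zeta\cup\zeta')(a_{1},\dots,a_{k+k'})=\pm\,\zeta(a_{1},\dots,a_{k})\,\zeta'(a_{k+1},\dots,a_{k+k'})$; evaluating at $a_{i}=\omega$ and summing over all splittings, the sum factors and one obtains $\widehat{\zeta\cup\zeta'}(\omega)=\hat{\zeta}(\omega)\cdot\hat{\zeta'}(\omega)$ in $P''$ — which, since composition of Maurer--Cartan morphisms is precisely such a pairing, is the stated compatibility. (Again a short sign check is needed, but it causes no difficulty.)
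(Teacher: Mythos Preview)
Your approach is correct and is exactly the computation the paper has in mind; the paper simply omits the proof as ``straightforward'' and gives no argument at all. Your master identity is the right organizing principle, and in fact the sign works out to $\epsilon_n=+1$ for every $n$: once you track the Koszul signs as you describe, the internal $\zeta_k(\omega,\dots,\omega^2,\dots,\omega)$ terms coming from $d_v\zeta_k$ carry the sign $(-1)^{n-k+i-1}$ while those coming from $(-1)^{n-k}b\zeta_k$ carry $(-1)^{n-k+i}$, so they cancel exactly, leaving $\widehat{D\zeta}(\omega)=d(\hat\zeta(\omega))+\omega\,\hat\zeta(\omega)-(-1)^n\hat\zeta(\omega)\,\omega=d_\omega(\hat\zeta(\omega))$.
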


\end{document}